\numberwithin{equation}{section}
\newtheorem{theorem}{Theorem}[section]
\newtheorem{lemma}[theorem]{Lemma}
\newtheorem{corollary}[theorem]{Corollary}
\newtheorem{proposition}[theorem]{Proposition}
\newtheorem*{theorem*}{Theorem}
\newtheorem*{proposition*}{Proposition}
\theoremstyle{remark}
\newtheorem{remark}[theorem]{Remark}
\newtheorem{example}[theorem]{Example}
\newtheorem{reminder}[theorem]{Reminder}
\theoremstyle{definition}
\newtheorem{definition}[theorem]{Definition}
\newcommand{\E}{{\mathbb E}}
\newcommand{\F}{{\mathbb F}}
\newcommand{\T}{{\mathbb{T}}}
\newcommand{\N}{{\mathbb N}}
\newcommand{\mc}[1]{\mathscr{#1}}
\newcommand{\mb}[1]{\mathbb{#1}}
\newcommand{\mf}[1]{\mathfrak{#1}}
\newcommand{\md}[1]{\mathds{#1}}
\newcommand{\Alg}{\mathrm{Alg}}
\newcommand{\unit}{{\md 1}}
\newcommand{\from}{\leftarrow}
\newcommand{\co}{\colon\thinspace}
\newcommand{\aquot}{/\!/}
\renewcommand{\SS}{{\mathbb{S}}}
\DeclareMathOperator{\Tor}{Tor}
\DeclareMathOperator{\Map}{Map}
\DeclareMathOperator{\End}{End}
\newcommand{\hocolim}{\mathrm{hocolim}}
\newcommand{\timesover}[1]{\underset{#1}{\times}}
\newcommand{\smashover}[1]{{\underset{#1}{\wedge}}}
\DeclareMathOperator{\id}{id}
\DeclareMathOperator{\LMod}{LMod}
\DeclareMathOperator{\EZMod}{LAct}
\DeclareMathOperator{\Free}{Free}
\DeclareMathOperator{\Fun}{Fun}
\DeclareMathOperator{\TwArr}{Tw}
\DeclareMathOperator{\Ar}{Ar}
\DeclareMathOperator{\twobar}{Bar}
\DeclareMathOperator*{\holim}{holim}
\newcommand{\NSFSupport}[1]{This material is based upon work supported by the National Science Foundation under Grant No. {#1}}
\newcommand{\MAHNSF}{DMS-2105019}
\newcommand{\MAHAddress}{University of California Los Angeles, Los Angeles, CA 90095}
\newcommand{\MAHemail}{\tt{mikehill@math.ucla.edu}}
\newcommand{\TLAddress}{University of Minnesota, Minneapolis, MN 55455}
\newcommand{\TLemail}{\tt{tlawson@umn.edu}}
\title{$\E_k$-pushouts and $\E_{k+1}$-tensors}
\author[MAH]{Michael A.~Hill}
\address{\MAHAddress}
\email{\MAHemail}
\author[TL]{Tyler Lawson}
\address{\TLAddress}
\email{\TLemail}
\thanks{\NSFSupport{\MAHNSF}}
\begin{document}

\maketitle

\begin{abstract}
  We prove a general result that relates certain pushouts of $\E_k$-algebras to relative tensors over $\E_{k+1}$-algebras. Specializations include a number of established results on classifying spaces, resolutions of modules, and (co)homology theories for ring spectra. The main results apply when the category in question has centralizers.

  Among our applications, we show that certain quotients of the dual Steenrod algebra are realized as associative algebras over $H\F_p \wedge H\F_p$ by attaching single $\E_1$-algebra relation, generalizing previous work at the prime $2$. We also construct a filtered $\E_2$-algebra structure on the sphere spectrum, and the resulting spectral sequence for the stable homotopy groups of spheres has $E_1$-term isomorphic to a regrading of the $E_1$-term of the May spectral sequence.
\end{abstract}



\section{Introduction}\label{sec:intro}

For commutative monoids in any symmetric monoidal category, pushouts are often straightforward to compute: the pushout of a diagram $B \from A \to R$ of commutative monoids is equivalent to a relative tensor $B \otimes_A R$. Moreover, the tensor product often passes nicely to a derived setting by replacing it with the two-sided bar construction $\twobar(B,A,R)$ that calculates the homotopy pushout. By contrast, computing pushouts and homotopy pushouts of associative monoids is more involved, because there is no easy rewriting procedure to separate terms involving $B$, $R$, and $A$. This becomes rapidly clear when attempting to apply results like the Seifert--van Kampen theorem or compute pushouts in the category of associative rings.

Homotopy-theoretically, associativity and commutativity are part of a very broad range of levels of commutativity. An $\E_0$-algebra has a unit, but no multiplication that it is the unit for; an $\E_1$-algebra has a multiplication that is associative, up to higher coherences; an $\E_2$-algebra has a multiplication with structure related to the braid axioms; and this hierarchy proceeds through higher and higher stages until $\E_\infty$-algebras, which are associative and commutative up to higher coherences. These first became prominent as part of a recognition principle for iterated loop spaces \cite{BoardmanVogtRecognition, MayLoopspaces}, but now play an important role in higher algebra.

Computing pushouts of $\E_k$-algebras is, by and large, a gigantic pain in the neck.

Our main result in this paper asserts that, in certain cases, homotopy pushouts in $\E_k$-algebras can be computed by a (derived) tensor product over an $\E_{k+1}$-algebra.

\begin{theorem*}[\ref{thm:Ekpushout}]
  Suppose that $\mc C$ is a presentable $\E_{k+1}$-monoidal $\infty$-category with monoidal structure $\otimes$, that $B$ is an $\E_{k+1}$-algebra in $\mc C$, $A \to B$ is a map of $\E_k$-algebras in $\mc C$, and that $\E A$ is an $\E_{k+1}$-algebra freely generated by $A$. Given $R$ any $\E_k$-algebra in the category $\LMod_B$ of left $B$-modules, the induced natural diagram
  \[
  \begin{tikzcd}
    B \otimes A \otimes R \ar[r,"m \otimes 1"] \ar[d,swap,"1 \otimes m"] &
    B \otimes R \ar[d, "m"] \\
    B \otimes R \ar[r, "m", swap] &
    B \otimes_{\E A} R
  \end{tikzcd}
  \]
  is a homotopy pushout diagram in the category of $\E_k$-algebras in $\LMod_{B}$.
\end{theorem*}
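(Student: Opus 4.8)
The plan is to strip off one level of commutativity with Dunn additivity and reduce to a relative tensor over a \emph{free} $\E_1$-algebra. Set $\mathcal D := \Alg_{\E_k}(\mc C)$, a presentable $\E_1$-monoidal $\infty$-category; the equivalence $\E_{k+1} = \E_1\otimes\E_k$ makes $B$ an $\E_1$-algebra in $\mathcal D$, identifies $\Alg_{\E_k}(\LMod_B)$ with $\LMod_B(\mathcal D)$, and exhibits $\E A$ as the free $\E_1$-algebra on the object $A$ of $\mathcal D$, that is the tensor algebra $\bigsqcup_{n\ge 0}A^{\otimes n}$, together with the $\E_1$-algebra map $\varepsilon\co\E A\to B$ adjoint to $A\to B$. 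Under this dictionary each corner of the square is the image of an object of $\mathcal D$ under a (relative) free-module functor into $\LMod_B(\mathcal D)$, and $B\otimes_{\E A}R$ is the realization of the two-sided bar construction $\twobar_\bullet(B,\E A,R)$ formed in $\LMod_B(\mathcal D)$, with $R$ restricted to an $\E A$-module along $\varepsilon$.

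The technical core is a resolution statement for modules over a free $\E_1$-algebra: for $\E A$ the tensor algebra on an object $A$ of a presentable $\E_1$-monoidal $\infty$-category and $N$ a left $\E A$-module, the canonical reflexive diagram $\E A\otimes A\otimes N \rightrightarrows \E A\otimes N$, with face maps ``multiply $A$ into $\E A$'' and ``let $A$ act on $N$'' and degeneracy ``insert the unit of $A$'', has colimit $N$; equivalently, $N$ admits a length-one resolution by free $\E A$-modules. This is the homotopy-coherent form of the classical fact that the diagonal bimodule of a tensor algebra has a two-term resolution by free bimodules. I would establish it either from that noncommutative Koszul resolution $\E A\otimes A\otimes\E A \to \E A\otimes\E A \to \E A$, or---to sidestep stability hypotheses---from the description of modules over a free $\E_1$-algebra as pairs (object, action map), equivalently as the lax equalizer $\mathrm{LEq}\bigl(A\otimes(-),\,\mathrm{id}_{\mathcal D}\bigr)$, under which the asserted presentation is immediate from the construction of the lax equalizer. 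Applying the colimit-preserving base-change functor $B\otimes_{\E A}(-)$ to this presentation of $R$ then collapses $\twobar_\bullet(B,\E A,R)$ onto the finite reflexive diagram with terms $B\otimes A\otimes R$ and $B\otimes R$: the two face maps become exactly $m\otimes 1$ and $1\otimes m$, the common section is $B\otimes\unit_A\otimes R$, and the colimit is $B\otimes_{\E A}R$.

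The step I expect to be the main obstacle is precisely this resolution lemma, and in particular the fact that it has to be proved intrinsically: the pushout in the statement lives in $\Alg_{\E_k}(\LMod_B)$, where not even sifted colimits are computed on underlying objects, so nothing can be checked after forgetting down to $\mc C$; instead all bar constructions must be manipulated inside $\mathcal D = \Alg_{\E_k}(\mc C)$, using only that its monoidal product preserves colimits in each variable and that $\LMod_B(\mathcal D)$ inherits its colimits from $\mathcal D$. With the lemma in hand, it remains to compare the colimit of the reflexive one-truncated diagram with the homotopy pushout of the span $B\otimes R \from B\otimes A\otimes R\to B\otimes R$ and to match the structure maps with those in the statement---here the reflexive degeneracy $B\otimes\unit_A\otimes R$, built from the unit of $A$ and the compatible unit homotopies of $\varepsilon$ and of $B$, is what relates the two legs of the cocone---and to check naturality in $R$, for which one uses the defining adjunction $\Map_{\Alg_{\E_{k+1}}}(\E A,-)\simeq\Map_{\Alg_{\E_k}}(A,-)$ of the free $\E_{k+1}$-algebra.
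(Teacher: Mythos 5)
Your skeleton matches the paper's: reduce by Dunn additivity to working with the $\E_1$-algebra $\E A$ and its left modules inside $\mathcal D = \Alg_{\E_k}(\mc C)$, prove there that $R$ sits in a pushout $\E A\otimes A\otimes R \rightrightarrows \E A\otimes R \to R$ of left $\E A$-modules, and then base-change along the colimit-preserving functor $B\otimes_{\E A}(-)$. You have also correctly located the crux (the resolution statement must be proved intrinsically in $\mathcal D$, which is not presentable monoidal). But at that crux there is a genuine gap, in both of your proposed routes. First, $\E A$ is \emph{not} the tensor algebra $\coprod_{n\ge 0}A^{\otimes n}$ in $\mathcal D$: that formula for a free $\E_1$-algebra requires the monoidal product to preserve colimits in each variable, which fails for $\Alg_{\E_k}(\mc C)$ (already for ordinary associative rings), so the noncommutative Koszul resolution of the diagonal bimodule is not available (and would in any case need stability). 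Second, the ``modules over a free $\E_1$-algebra are pairs (object, unital action map)'' equivalence, which you invoke as if it were off the shelf, is precisely the nontrivial content here: the standard lax-equalizer arguments establish it either via endomorphism objects $\End(M)$ or under presentability-type compatibility of $\otimes$ with colimits, and neither is available in $\Alg_{\E_k}(\mc C)$. So your plan names the hard step but does not supply the idea that makes it go through.

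The paper's missing ingredient is Lurie's theory of \emph{centralizers}: $\Alg_{\mc O}(\mc C)$ has centralizers (HA 5.3.1.14), and these serve as a substitute for endomorphism objects. Using them the paper proves (Theorem~\ref{thm:EZequivalence}) that $\LMod_{\E X}(\mc M)\simeq\EZMod_X(\mc M)$, the category of objects with a unital action of $X$, via a moduli-of-objects-and-arrows comparison; the pushout is then obtained not by a bar construction but by a corepresentability argument (Propositions~\ref{prop:EZmaps} and~\ref{prop:EZpushout}): the mapping space $\Map_{\EZMod_X}(M,N)$ is naturally a homotopy pullback of two copies of $\Map_{\mc M}(M,N)$ over $\Map_{\mc M}(X\otimes M,N)$, which by the free--forgetful adjunction says exactly that $M$ is the pushout of $L(X\otimes M)\rightrightarrows LM$. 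If you want to complete your plan, you should either reproduce this centralizer argument or find another proof of the pairs-description in a merely monoidal (non-presentable) $\infty$-category with pullbacks; the final assembly (base change, matching the face maps $m\otimes 1$ and $1\otimes m$ and the unit section) is as you describe.
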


\begin{remark}
  We need to be able to discuss $\E_k$-algebras, and their homotopy pushouts, in the category of left modules over an $\E_{k+1}$-algebra. To some degree, this forces us to adopt a foundational setup that can handle such structure.

  Throughout this paper we will use quasicategories as a model for $\infty$-categories, using \cite{LurieHTT} and \cite{LurieHA} as references. However, let us informally unpack the hypotheses of the main theorem. Asking that $\mc C$ is an $\infty$-category, up to equivalence, is the same as asking to have spaces of maps between objects. Presentability of $\mc C$ asks that $\mc C$ has homotopy limits and colimits, and that $\mc C$ is generated under homotopy colimits by a set of well-behaved objects. Being a monoidal $\infty$-category asks that we have an essentially associative tensor product $\otimes$ on $\mc C$ that is compatible with mapping spaces; being monoidal presentable means that the operation $X \otimes Y$ has to preserve homotopy colimits in each variable $X$ and $Y$ separately. For this tensor product to be an $\E_{k+1}$-monoidal structure, it also has to come equipped with a large amount of coherence information expressing the degree to which $\otimes$ is commutative.

  The benefit of starting with these types of hypotheses is that they are weak enough to apply in a very wide variety of circumstances, such as the categories of spaces, spaces over a fixed $\E_{k+1}$-space, spectra, modules over a structured ring spectrum, graded or filtered versions of the same, and many others.
\end{remark}

\subsection{Actions}

This particular endeavor will pass through a number of technical results. Before we embark, we would like to spend some time mapping out the underlying structure of the proof.

The first observation is that, by base change, the general case of our main theorem will follow from the case where the algebra $B$ is the enveloping algebra $\E A$, so it suffices to show that there is a homotopy pushout diagram
\[
  \begin{tikzcd}
    \E A \otimes A \otimes R \ar[r,"m \otimes 1"] \ar[d,swap,"1 \otimes m"] &
    \E A \otimes R \ar[d, "m"] \\
    \E A \otimes R \ar[r, "m", swap] &
    R
  \end{tikzcd}
\]
of $\E_k$-algebras in $\LMod_{\E A}$.

The next reduction is to the associative case. The Dunn additivity theorem implies that $\E_{k+1}$-algebras are equivalent to $\E_1$-algebras in the category of $\E_k$-algebras \cite{DunnAdditivity}. The main result will follow if we can find a sufficiently general result covering the case $k=0$: one whose hypotheses will apply when $\mc C$ is a category of $\E_k$-algebras. Unfortunately, even if $\mc C$ is monoidal presentable, the category of $\E_k$-algebras in $\mc C$ is typically not. Even in ordinary algebra, for example, taking the tensor product $A \otimes B$ of two associative algebras typically does not preserve colimits of associative algebras in each variable. 

This leads us to understanding module structures over ``free'' algebras. If we have a monoidal category $\mc C$ and we have an $\E_0$-algebra $\unit \to X$, we can consider the \emph{enveloping algebra} $\E X$, an $\E_1$-algebra generated by it. In many circumstances, we can give a simpler description of left $\E X$-modules. Such simpler descriptions are automatic if $M$ has an endomorphism object $\End(M)$, because then unital maps $X \to \End(M)$ are equivalent to algebra maps $\E X \to \End(M)$ by the universal property of $\E X$. 

Unfortunately, in categories of $\E_k$-algebras  these endomorphism objects rarely exist, but there is a closely related concept called the \emph{centralizer} \cite[\S 5.3.1]{LurieHA}. In Section~\ref{sec:centers} we will show that, when $\mc C$ has centralizers, the category of left $\E X$-modules is equivalent to a category of objects with an action of $X$.

\begin{theorem*}[\ref{thm:EZequivalence}]
  Suppose that $\mc C$ is a monoidal $\infty$-category with pullbacks, that $\mc M$ is left-tensored over $\mc C$, and that $\mc M$ has centralizers in $\mc C$.
  
  If $X$ is an $\E_0$-algebra in $\mc C$ and $\E X$ is an enveloping algebra for $X$, then there is an equivalence of $\infty$-categories $\LMod_{\E X}(\mc M) \to \EZMod_X(\mc M)$ between left $\E X$-modules in $\mc M$ and objects with a unital left action map $\lambda\co X \otimes M \to M$.
\end{theorem*}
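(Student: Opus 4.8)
The plan is to construct the equivalence $\LMod_{\E X}(\mc M) \to \EZMod_X(\mc M)$ by exhibiting both sides as fibers of a common functor involving the classifying object for actions. First I would set up the category $\EZMod_X(\mc M)$ carefully as the $\infty$-category of pairs $(M, \lambda)$ where $\lambda\co X \otimes M \to M$ is unital, i.e.\ the composite $M \simeq \unit \otimes M \to X \otimes M \xrightarrow{\lambda} M$ is the identity; this should be realized as a suitable $\infty$-category of sections, or equivalently as a pullback of $\Fun(\Delta^1, \mc M)$ over the target and source evaluation maps encoding the unit condition. Simultaneously, $\LMod_{\E X}(\mc M)$ is by definition the $\infty$-category of modules over the associative algebra $\E X$.

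The key device is the centralizer hypothesis: because $\mc M$ has centralizers in $\mc C$, for each $M \in \mc M$ there is an endomorphism-like $\E_1$-algebra $\mf z(M)$ in $\mc C$ — the centralizer of the identity of $M$ — equipped with a canonical action on $M$ and characterized by the universal property that $\Map_{\Alg_{\E_1}(\mc C)}(B, \mf z(M))$ corepresents the space of $B$-actions on $M$ compatible with $\mc M$'s tensoring. Given a unital action $\lambda\co X \otimes M \to M$, the universal property of the centralizer produces a unital map $X \to \mf z(M)$ of $\E_0$-algebras, and then the universal property of the enveloping algebra $\E X$ (that $\Map_{\Alg_{\E_0}}(X, U C) \simeq \Map_{\Alg_{\E_1}}(\E X, C)$ for any $\E_1$-algebra $C$, where $U$ is the forgetful functor) produces an $\E_1$-algebra map $\E X \to \mf z(M)$, hence an $\E X$-module structure on $M$. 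This gives a functor $\EZMod_X(\mc M) \to \LMod_{\E X}(\mc M)$; I would prefer to organize this not object-by-object but as a single map of $\infty$-categories by comparing the relevant mapping-space-valued functors, or by building an explicit functor of simplicial sets using the monoidal/tensored structure.

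For the reverse direction and the proof that these are mutually inverse, I would argue that both functors are obtained by pulling back along the same correspondence. Concretely: an $\E X$-module structure on $M$ restricts along the unit map $\unit \to \E X$ (an $\E_0$-algebra map, since $X$ is an $\E_0$-algebra and $\E X$ its envelope) together with the generating map $X \to U\E X$ to produce a unital action $X \otimes M \to M$; this is visibly a functor $\LMod_{\E X}(\mc M) \to \EZMod_X(\mc M)$. The composite in one direction is the identity essentially by the universal property of $\E X$ applied pointwise; the composite in the other direction is the identity because the centralizer map $X \to \mf z(M)$ recovered from the module structure agrees, by the defining universal property of $\mf z(M)$, with the original classifying map. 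The cleanest formulation is probably to show that both $\LMod_{\E X}(\mc M)$ and $\EZMod_X(\mc M)$ represent the same functor on $\infty$-categories over $\mc M$ — namely $\mc D \mapsto \{$functors $\mc D \to \mc M$ together with a unital $X$-action on the universal object$\}$ — and that the comparison map induces an equivalence on these representing data; the universal property of $\E X$ is exactly what converts "$X$-action" into "$\E X$-module" at the level of these representing functors.

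The main obstacle I expect is not the pointwise universal-property manipulation but the coherence: making the assignment $M \mapsto \mf z(M)$ and the associated action into functorial data over all of $\mc M$ at once, rather than on individual objects, so that the two functors are honestly maps of $\infty$-categories and the inversions are genuine homotopies of functors. The technology of Section~\ref{sec:centers} — presumably a relative/parametrized version of Lurie's centralizer construction, built so that centralizers assemble into a functor $\mc M \to \Alg_{\E_1}(\mc C)$ when they exist — should be precisely what handles this, so the proof will largely consist of assembling that functoriality and then invoking the universal property of $\E X$ in the functor $\infty$-category $\Fun(\mc M, \mc C)$ or in an appropriate $\infty$-category of $\mc M$-families.
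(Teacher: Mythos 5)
Your core idea is the paper's: the equivalence is detected fiberwise over $M \in \mc M$, where both sides become mapping spaces into the center $\mf Z(M)$ (Propositions \ref{prop:algebrasovercenter} and \ref{prop:actionsovercenter}, assembled in Proposition \ref{prop:modactcat}), and the universal property of $\E X$ identifies $\Map_{\Alg_{\E_1}(\mc C)}(\E X, \mf Z(M))$ with $\Map_{\Alg_{\E_0}(\mc C)}(X, \mf Z(M))$. But your proposed execution has a genuine gap exactly at the point you flag as ``the main obstacle.'' You propose to resolve the coherence problem by making $M \mapsto \mf Z(M)$ into a functor $\mc M \to \Alg_{\E_1}(\mc C)$ and then building an explicit inverse functor (or a common representing functor over $\mc M$). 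This is precisely what does not work: centralizers $\mf Z(f)$ are functorial in the \emph{twisted arrow category} (Lemma \ref{lem:centralizerfunctor}), but centers $\mf Z(M) = \mf Z(\id_M)$ are not functorial in $M$ --- a morphism $M \to N$ induces no comparison map between $\mf Z(M)$ and $\mf Z(N)$ in either direction --- and the paper explicitly remarks on this failure after Definition \ref{def:center}. So the inverse functor cannot be assembled by the route you describe, and the ``both represent the same functor'' formulation inherits the same problem.

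The device that actually closes the gap is to never construct an inverse. The forgetful functor $\LMod_{\E X}(\mc M) \to \EZMod_X(\mc M)$ already exists; by Lemma \ref{lem:moduliequiv} it suffices to check that it induces weak equivalences on maximal subgroupoids of the two categories \emph{and of their arrow categories}. On objects, the fiberwise argument over $\mc M^\simeq$ suffices (centers need only be natural with respect to equivalences, which is automatic). For arrows, one reruns the identical fiberwise argument with $\mc M$ replaced by $\Ar(\mc M)$, which still has centralizers by Theorem \ref{thm:diagramcentralizer}, computed as a limit over the twisted arrow category of $\Delta^1$. This two-step subgroupoid criterion, together with the existence of centralizers in the arrow category, is the missing ingredient in your write-up; without it, or some genuine substitute for the functoriality of centers, the ``mutually inverse'' step does not go through.
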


With this in mind, to show that we have a homotopy pushout in the category of left $\E X$ modules, it suffices to show that it is a homotopy pushout in the category $\EZMod_X$ of objects with an action of $X$. To do this, we need to be able to compute the space of maps between two objects with action maps. Section~\ref{sec:actions} is devoted to understanding this, and to proving the following result.

\begin{proposition*}[\ref{prop:EZpushout}]
  Suppose that the forgetful functor $\EZMod_X(\mc M) \to \mc M$ has a left adjoint $L$. Then there exists a natural transformation $\lambda\co L(X \otimes M) \to L(M)$, such that for any object $M$ with left action $\lambda_M\co X\otimes M \to M$, the diagram
  \[
    \begin{tikzcd}
      L(X \otimes M) \ar[r,"\lambda"] \ar[d,"L(\lambda_M)",swap] &
      LM \ar[d,"\epsilon"] \\
      LM \ar[r,"\epsilon",swap] &
      M
    \end{tikzcd}
  \]
  is a homotopy pushout in $\EZMod_X(\mc M)$.
\end{proposition*}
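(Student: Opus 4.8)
The plan is to check the universal property of the homotopy pushout directly. Since the square already carries a cocone to $M$, it is a homotopy pushout in $\EZMod_X(\mc M)$ exactly when, for every $N\in\EZMod_X(\mc M)$, applying $\Map_{\EZMod_X(\mc M)}(-,N)$ produces a homotopy pullback of spaces; note this does not require $\EZMod_X(\mc M)$ to have pushouts in general. I would use the adjunction $L\dashv U$ to rewrite the three non-initial corners as $\Map_{\mc M}(M,N)$, $\Map_{\mc M}(M,N)$ and $\Map_{\mc M}(X\otimes M,N)$ (suppressing $U$ from the notation), and at this point I would \emph{define} $\lambda$: let $\lambda\co L(X\otimes M)\to LM$ be the morphism adjoint to the composite $X\otimes M\xrightarrow{\id_X\otimes\eta}X\otimes ULM\xrightarrow{a_{LM}}ULM$, where $\eta$ is the unit of $L\dashv U$ and $a_{LM}\co X\otimes ULM\to ULM$ is the structural $X$-action on the free object $LM$. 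Naturality in $M$ is immediate. A diagram chase with the triangle identities then shows both composites $L(X\otimes M)\rightrightarrows LM\xrightarrow{\epsilon}M$ agree (so the square is well defined) and, under the adjunction, the two legs $\Map_{\mc M}(M,N)\rightrightarrows\Map_{\mc M}(X\otimes M,N)$ become $h\mapsto h\circ\lambda_M$ and $h\mapsto\lambda_N\circ(\id_X\otimes h)$, while both legs out of $\Map_{\EZMod_X(\mc M)}(M,N)$ become the forgetful map.

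So the proposition reduces to showing that
\[
\Map_{\EZMod_X(\mc M)}(M,N)\ \longrightarrow\ \Map_{\mc M}(M,N)\ \timesover{\Map_{\mc M}(X\otimes M,N)}\ \Map_{\mc M}(M,N)
\]
is an equivalence, the two legs of the target being $h\mapsto h\circ\lambda_M$ and $h\mapsto\lambda_N\circ(\id_X\otimes h)$. The nontrivial input is a computation of the source mapping space, which is the subject of this section. I would obtain it from monadicity: $U\co\EZMod_X(\mc M)\to\mc M$ is conservative, has the left adjoint $L$ by hypothesis, and creates geometric realizations of $U$-split simplicial objects — such realizations are absolute, hence preserved in particular by $X\otimes(-)$, which is all that is needed to lift the $X$-action along the realization — so by Barr--Beck--Lurie $U$ is monadic with monad $T=UL$. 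Consequently $M$ is the geometric realization of its canonical free resolution $[n]\mapsto(LU)^{n+1}M$, and hence
\[
\Map_{\EZMod_X(\mc M)}(M,N)\ \simeq\ \holim_{[n]\in\Delta}\ \Map_{\mc M}(T^nM,N),
\]
where I am writing $M$ for its underlying object and $T^0M=M$, $T^1M=UL M$, and so on.

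The main obstacle is then to identify this totalization with the displayed fibre product, and this is precisely where the hypothesis that $X$ is an $\E_0$-algebra (rather than a bare object) enters. The unit $\unit\to X$ gives, after tensoring and using unitality of the actions on $M$ and $N$ together with naturality of the unit of the $\E_0$-structure, a common section of the two legs $\Map_{\mc M}(M,N)\rightrightarrows\Map_{\mc M}(X\otimes M,N)$; more importantly it equips the cosimplicial space $[n]\mapsto\Map_{\mc M}(T^nM,N)$ with extra codegeneracies relative to its first truncation, forcing the totalization to be computed in low cosimplicial degree. Concretely, the underlying object of $LY$ is $\E X\otimes Y$ for the free $\E_1$-algebra $\E X$ on $X$, and the collapse is the homotopy-coherent form of the elementary fact that over a free algebra every product $x_1\cdots x_n\otimes m\in\E X\otimes M$ is rewritten uniquely as $\unit\otimes(x_1\cdots x_n\cdot m)$, so that a left $\E X$-module is presented by the single relation between ``multiplying into $\E X$'' and ``acting on $M$'' that is visible in the square. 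Carrying this out — making the extra degeneracies precise and running the cofinality argument that reduces the totalization to the fibre product above — is the technical heart of the proof; the triangle-identity diagram chases and the identification of the various legs are routine bookkeeping.
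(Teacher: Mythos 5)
Your construction of $\lambda$ (adjoint to $X\otimes M\xrightarrow{1\otimes\eta}X\otimes ULM\to ULM$) and your reduction of the statement, via representability, to the assertion that
\[
\Map_{\EZMod_X(\mc M)}(M,N)\ \longrightarrow\ \Map_{\mc M}(M,N)\timesover{\Map_{\mc M}(X\otimes M,N)}\Map_{\mc M}(M,N)
\]
is an equivalence are exactly the paper's argument; that equivalence is precisely Proposition~\ref{prop:EZmaps}, proved immediately beforehand, and with it in hand your first paragraph is a complete proof. The gap lies entirely in your proposed derivation of this mapping-space formula.

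The monadicity route does not work as sketched. First, Barr--Beck--Lurie requires $U$ to be conservative and $\EZMod_X(\mc M)$ to admit geometric realizations of $U$-split simplicial objects; neither is among the hypotheses (only the existence of $L$ is assumed), and verifying conservativity already presupposes an understanding of the mapping spaces you are trying to compute. Second, your identification of $UL$ with $\E X\otimes(-)$ is the content of Theorem~\ref{thm:EZequivalence}, which is proved later and requires the centralizer hypothesis that is absent from this proposition. Third, and most seriously, the collapse of the totalization $\holim_{\Delta}\Map_{\mc M}(T^nM,N)$ onto a pullback over its $1$-truncation is not a formal consequence of ``extra codegeneracies'' (those would collapse the totalization onto the augmentation, not onto a fiber product of the $1$-skeleton): for an honest monad such as the free left-$\E X$-module monad, $\Map_{\LMod_{\E X}}(M,N)$ is a genuine cobar construction that does not reduce to a pullback. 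The reason a two-term pullback formula holds for $\EZMod_X$ is that an $X$-action is only an $\E_0$-datum --- a single unital action map with no associativity constraint --- and this is exactly what the paper's definition of $\EZMod_X(\mc M)$ as a category of sections over $\Delta^2$ makes visible: the twisted arrow category of $\Delta^2$ is a finite poset, so Lemma~\ref{lem:sectionmaps} and Corollary~\ref{cor:actionmapping} compute the mapping space as a finite limit directly. Routing through the monad $UL$ discards that finiteness and would force you to prove the equivalence $\LMod_{\E X}(\mc M)\simeq\EZMod_X(\mc M)$ along the way, under hypotheses where it need not hold. The fix is simply to prove the pullback formula from the section-category description, as in Proposition~\ref{prop:EZmaps}, rather than from a monadic resolution.
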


These results are proved with the aid of some results on computing centralizers, and mapping spaces between sections, using the twisted arrow category (Lemma~\ref{lem:sectionmaps} and Theorem~\ref{thm:diagramcentralizer}); these may be of some independent utility.

\subsection{Applications to stable homotopy}

Because of the broad range of cases where this result holds, it specializes to several topics of classical and more modern interest. Applied algebraically, this result recovers the Koszul resolution of a module over a tensor algebra. Applied to the category of pointed spaces, it recovers information about the James construction on a space. Applied to categories of of ring spectra, it recovers features important to Basterra and Mandell's study of homology and cohomology theories for $\E_k$ ring spectra. We will discuss several of these examples in Section~\ref{sec:applications}. However, our principal interest is in stable homotopy theory.

As one specific application, we can build on a description of the mod-$p$ Eilenberg--Mac Lane spectra $H\F_p$ as a Thom spectrum, due to Hopkins and Mahowald, to obtain the following alternative description.
\begin{theorem*}[\ref{thm:e3smash}]
  Fix a prime $p$, and consider the pair of $\E_3$-algebra maps
  \[
    \Free_{\E_3}(S^0) \rightrightarrows \mb S
  \]
  from the free $\E_3$-algebra on $S^0$, classifying $0$ and $p$ respectively. Then the (derived) smash product is the Eilenberg--Mac Lane spectrum $H\F_p$:
  \[
    H\F_p \simeq \mb S \smashover{\Free_{\E_3}(S^0)} \mb S
  \]
\end{theorem*}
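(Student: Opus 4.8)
The plan is to realize $H\F_p$ as an instance of the main theorem by exploiting the Hopkins--Mahowald description of $H\F_p$ as a Thom spectrum. Recall that $H\F_p$ arises as the Thom spectrum of a map $\Omega^2 S^3 \to BGL_1(\mb S)$ which, after passing to Thom spectra, presents $H\F_p$ as the free $\E_2$-algebra $\mb S /\!/ p$ obtained by coning off $p$ in $\E_2$-algebras; concretely, $H\F_p \simeq \mb S \otimes_{\Free_{\E_2}(S^0)} \mb S$ where the two maps $\Free_{\E_2}(S^0) \rightrightarrows \mb S$ classify $0$ and $p$. First I would recall this presentation carefully, locating it in the literature (Hopkins--Mahowald, as recorded by Mahowald and by Mathew--Naumann--Noel and others), and noting that it is genuinely an $\E_2$-statement.

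Next I would apply Theorem~\ref{thm:Ekpushout} with $k = 2$. Take $\mc C$ to be the category of spectra, which is a presentable $\E_\infty$-monoidal (hence $\E_{k+1}$-monoidal for all $k$) $\infty$-category. Let $A = \Free_{\E_2}(S^0)$, regarded as an $\E_2$-algebra, and let $B = \mb S$ with the $\E_3$-algebra map $A \to B$ given by the map classifying $p$ (one of the two maps in the statement); since $\mb S$ is the initial $\E_\infty$-ring it is in particular an $\E_3$-algebra. The enveloping $\E_3$-algebra $\E A$ of the $\E_2$-algebra $A = \Free_{\E_2}(S^0)$ is exactly $\Free_{\E_3}(S^0)$: adjoining a free generator at the $\E_2$-level and then freely promoting to $\E_3$ is the same as freely generating an $\E_3$-algebra on the same generator, by the universal properties of the free functors. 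Taking $R = \mb S$ as an $\E_2$-algebra in $\LMod_{\mb S} = \mc C$ via the map classifying $0$, the theorem produces a homotopy pushout square of $\E_2$-algebras whose terminal vertex is $B \otimes_{\E A} R = \mb S \otimes_{\Free_{\E_3}(S^0)} \mb S$, i.e.\ the smash product in the statement. It remains to identify this pushout with $H\F_p$.

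For that identification I would match the pushout square from Theorem~\ref{thm:Ekpushout} against the Hopkins--Mahowald square. The key point is that the underlying $\E_2$-algebra pushout $B \otimes_{\E A} R$, formed from the $\E_3$-algebra $\E A = \Free_{\E_3}(S^0)$ together with the two maps to $\mb S$ classifying $p$ and $0$, is by Dunn additivity the same as the pushout of $\E_2$-algebras of the diagram $\mb S \leftarrow \Free_{\E_2}(S^0) \to \mb S$ with the two maps classifying $p$ and $0$ — since an $\E_3$-algebra is an $\E_1$-algebra in $\E_2$-algebras, tensoring over it is an $\E_2$-algebra pushout. (Equivalently, one replaces $0$ by $-p$ via the automorphism of $\Free_{\E_2}(S^0)$ negating the generator, so the pushout coning off $p$ relative to coning off $0$ is the same as coning off $p$ outright, giving $\mb S/\!/p$.) By the Hopkins--Mahowald theorem this is precisely $H\F_p$. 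I expect the main obstacle to be bookkeeping the $\E_k$ levels correctly: verifying that $\E(\Free_{\E_2}(S^0)) \simeq \Free_{\E_3}(S^0)$ as $\E_3$-algebras, and checking that the $\E_2$-algebra pushout delivered by the theorem really does coincide with the free $\E_2$ quotient $\mb S/\!/p$ appearing in Hopkins--Mahowald — rather than, say, its $\E_1$- or $\E_3$-analogue — since the precise amount of commutativity is exactly what makes the Thom spectrum argument identify the answer with $H\F_p$ and no less.
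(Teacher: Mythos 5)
Your proposal is correct and follows the paper's proof: cite Hopkins--Mahowald to present $H\F_p$ as the pushout in $\E_2$-algebras of $\mb S \xleftarrow{0} \Free_{\E_2}(S^0) \xrightarrow{p} \mb S$, identify $\E(\Free_{\E_2}(S^0)) \simeq \Free_{\E_3}(S^0)$ via the universal properties of free algebras, and apply Theorem~\ref{thm:Ekpushout} (the paper routes this through Corollary~\ref{cor:barsuspension}) to rewrite that pushout as $\mb S \smashover{\Free_{\E_3}(S^0)} \mb S$. One caution: your intermediate assertion $H\F_p \simeq \mb S \otimes_{\Free_{\E_2}(S^0)} \mb S$ must be read as the $\E_2$-algebra pushout and not as a relative tensor over (the underlying associative algebra of) $\Free_{\E_2}(S^0)$ --- the latter would compute the $\E_1$-quotient of $\mb S$ by $p$ rather than $H\F_p$, and the entire content of the theorem is that the correct base for the relative tensor is the enveloping $\E_3$-algebra, one level up.
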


This allows us to construct the following unusual filtration on the sphere spectrum. It is constructed by deliberately pushing the element $p \in \pi_0(\SS)$ into filtration $1$ in a category of filtered $\E_2$-algebras; this is inspired by techniques used by Baker \cite{BakerCloseEncounters} and Szymik \cite{SzymikCharacteristic}.

\begin{theorem*}[\ref{thm:weirdmayfilt}, \ref{cor:weirdmaysseq}]
  For any prime $p$, there exists a lift of the sphere spectrum to an $\E_2$-algebra in filtered spectra, giving rise to a spectral sequence of graded-commutative rings with abutment $\pi_* \mb S^\wedge_p$.
  
  For $p=2$, the $E_1$-term is a polynomial algebra
  \[
  \F_2[\mf h_{i,j}],
  \]
  where $\mf h_{i,j}$ are defined for $i \geq 1, j \geq 0$, with total degree and filtration given by the bidegree $(2^{i+j} - 2^j - 1, 2^{i+j-1})$.
  
  For $p$ odd, the $E_1$-term is
  \[
    \F_p[\mf v_i] \otimes \Lambda[\mf h_{i,j}] \otimes \F_p[\mf b_{i,j}],
  \]
  where the bidegree of $\mf v_i$ is $(2p^i-2, p^{i})$ for $i \geq 0$, of $\mf h_{i,j}$ is $(2p^{i+j}-2p^j-1,p^{i+j})$ for $i \geq 1, j \geq 0$, and of $\mf b_{i,j}$ is $(2p^{i+j+1}-2p^{j+1}-2,p^{i+j+1})$ for $i \geq 1, j \geq 0$.
\end{theorem*}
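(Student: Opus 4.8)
The plan is to deduce Theorem~\ref{thm:weirdmayfilt} from the $\E_3$-smash-product description in Theorem~\ref{thm:e3smash} by transporting that identification into the category of filtered spectra and then applying the main theorem (Theorem~\ref{thm:Ekpushout}) to turn the relative smash product into a pushout of $\E_2$-algebras. First I would set up the category $\Fun(\N,\mathrm{Sp})$ of filtered spectra (with the Day convolution monoidal structure, which is $\E_\infty$-monoidal and presentable), and build inside it a specific $\E_2$-algebra lift of $\SS^\wedge_p$. Following the Baker--Szymik idea, the construction is: take the free $\E_2$-algebra on a generator in filtration $1$ mapping to $p \in \pi_0\SS^\wedge_p$, and use this to produce a map $\Free_{\E_2}(S^0[1]) \to (\SS^\wedge_p)[0]$ from a filtered free $\E_2$-algebra; the associated graded of the cofiber/completion then realizes the ``$p$ in filtration $1$'' effect. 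The underlying spectrum (colimit over the filtration) must be checked to be $\SS^\wedge_p$.

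Next I would identify the $E_1$-page. Here is where Theorem~\ref{thm:e3smash} enters: since $H\F_p \simeq \SS \smashover{\Free_{\E_3}(S^0)} \SS$ with the two maps classifying $0$ and $p$, and since $\Free_{\E_3}(S^0) = \Free_{\E_1}$ applied in $\E_2$-algebras (Dunn additivity), the main theorem presents $H\F_p$ as the homotopy pushout, in $\E_2$-algebras in $H\F_p$-modules — or rather the relevant base — of a square built from $\Free_{\E_2}$ of a cell. Passing to filtered spectra with the generator placed in filtration $1$, the associated graded of this $\E_2$-algebra pushout is computed by the associated graded of the bar construction $\twobar(B, \E A, R)$, which (because everything in sight is free on a single generator in a fixed filtration) collapses to a free $\E_2$-algebra computation over $\F_p$. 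The free $\E_2$-algebra on a class, rationally/mod $p$, is computed by the homology of configuration spaces of points in $\mb R^2$, i.e. the $E_2$-operad homology, which over $\F_p$ is a free graded-commutative algebra on the Dyer--Lashof/Browder-type generators. Iterating this description — because the Hopkins--Mahowald generator itself is obtained by an $\E_2$-Thom-spectrum construction over $\Omega^2 S^3$ — produces exactly the indexing $\mf h_{i,j}$ (and for odd $p$ the $\mf v_i, \mf h_{i,j}, \mf b_{i,j}$), with the bidegrees being bookkeeping of iterated $Q$-operations: the filtration multiplies by $p$ at each application of a top operation, and the internal degree follows the standard formula for $\beta Q^s$ and $Q^s$ on a degree-$|x|$ class. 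I would then match these bidegrees against the stated formulas $(2^{i+j}-2^j-1, 2^{i+j-1})$ etc.; this is the same combinatorics that produces the $E_1$-term of the May spectral sequence, which is the content of Corollary~\ref{cor:weirdmaysseq}.

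Finally, for the spectral sequence statement itself (Corollary~\ref{cor:weirdmaysseq}): a filtered $\E_2$-algebra in spectra gives, by taking homotopy groups of the filtration tower, a multiplicative spectral sequence; the $\E_2$-structure (in particular $\E_2 \Rightarrow \E_1 \Rightarrow$ homotopy-commutative on associated graded since we are over a field) furnishes the graded-commutative ring structure on each page, and convergence to $\pi_*\SS^\wedge_p$ follows from the identification of the underlying spectrum as the $p$-complete sphere together with a boundedness/completeness check on the filtration (the filtration is exhaustive and, after $p$-completion, complete). The comparison with the May spectral sequence's $E_1$-term is then a formal matching of generators and bidegrees, once we observe that both arise as the homology of the same free graded-commutative algebra: our generators correspond to a regrading of the classical $h_{i,j}$, $b_{i,j}$, $v_i$.

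The hard part will be two things. The first is the genuinely homotopy-theoretic input: getting the associated graded of the $\E_2$-pushout to reduce cleanly to a \emph{free} $\E_2$-algebra over $\F_p$ on explicitly identified generators — this requires knowing that the bar construction degenerates in the filtered setting and that no hidden extensions or differentials in the associated graded obstruct reading off $E_1$ directly. The second is the bookkeeping: carefully tracking how the iterated Hopkins--Mahowald/Thom-spectrum description of $H\F_p$ over $\Omega^2 S^3$ feeds the Dyer--Lashof operations through the filtration, so that the indices $(i,j)$ and the exact bidegree formulas (with their $2^j$ and $2p^j$ correction terms, and the distinction between the $\mf h$, $\mf b$, and $\mf v$ families at odd primes) come out matching the classical May $E_1$-term after regrading. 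I would organize the odd-primary case as a parallel computation, isolating the $\beta Q^s$ versus $Q^s$ contributions to explain the exterior factor $\Lambda[\mf h_{i,j}]$ against the polynomial factors.
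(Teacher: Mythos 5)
Your high-level strategy (a Baker--Szymik-style deformation pushing $p$ into filtration $1$, Theorem~\ref{thm:e3smash} to identify the associated graded, Dyer--Lashof bookkeeping for the $E_1$-term) is the right one, but the two load-bearing steps are not correctly specified. First, the construction. The ``cofiber/completion'' of a map $\Free_{\E_2}(S^0[1]) \to \SS[0]$ does not produce a filtered $\E_2$-algebra whose underlying spectrum is the sphere: a cofiber of algebra maps is not an algebra, and any quotient-type construction would actually kill $p$ on the underlying object. (Also, a map from a free algebra on a filtration-$1$ generator into $\SS$ concentrated in filtration $0$ is forced to be trivial on the generator.) The paper instead forms the relative smash product $R = \SS \otimes_{\mb P^{\E_3}(x_0)} \mb P^{\E_3}(y_1)$ of \emph{filtered $\E_3$-algebras}, where $x_0 \mapsto p$ on one side and $x_0 \mapsto (\text{image of } y_1)$ on the other. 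The whole point is that the second map is an equivalence on underlying spectra --- so $R$ has underlying spectrum $\SS$ (not $\SS^\wedge_p$; $p$-completion enters only through convergence of the spectral sequence) --- while on associated graded $\bar x_0 \mapsto 0$, so that $\mathrm{gr}(R) \simeq (\SS \otimes_{\mb P^{\E_3}_{gr}(\bar x_0)} \SS) \wedge \mb P^{\E_3}_{gr}(\bar y_1) \simeq H\F_p \wedge \Free_{\E_3}(S^1)$ by Theorem~\ref{thm:e3smash}. You need the free algebras to be $\E_3$ both to invoke that theorem and so that the relative smash product retains an $\E_2$-structure.

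Second, the $E_1$-term. You assert the associated graded ``collapses to a free $\E_2$-algebra computation over $\F_p$'' and then try to recover the second index by ``iterating'' the Hopkins--Mahowald Thom-spectrum description. This does not work as stated: the $H\F_p$-homology of a free $\E_2$-algebra on a single even-unrelated class carries only \emph{one} family of admissible lower-indexed operations ($Q_1^{(j)}$ at $p=2$), which cannot produce the two-parameter family $\mf h_{i,j}$. The correct identification is that $\pi_* \mathrm{gr}(R) = H_*(\Free_{\E_3}(S^1);\F_p)$, and the Cohen--Lada--May computation of the homology of free $\E_3$-algebras supplies exactly two independent operations $Q_1, Q_2$ (plus Bocksteins at odd $p$); the generators are the admissible words $\mf h_{i,j} = Q_1^{(j)} Q_2^{(i-1)} y_1$ at $p=2$ (and $\mf v_i$, $\mf h_{i,j}$, $\mf b_{i,j}$ at odd $p$), with the bidegrees following from the standard degree and filtration-doubling formulas by induction. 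No iteration of the Thom-spectrum construction is involved. Your final paragraph on multiplicativity and convergence is essentially fine, modulo the correction that the underlying object is $\SS$ and the abutment is its completion with respect to the filtration.
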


This $E_1$-term is isomorphic to a regrading of the $E_1$-term of the May spectral sequence, but converges directly to stable homotopy rather than the Adams $E_2$-term. The spectral sequence also has quite different behavior (see Remark~\ref{rmk:sseqblather}).

Our second application generalizes a result from \cite{BHLSZQuotient} to odd primes: taking quotients by ``central'' classes in an $\E_k$-algebra also kills $\E_{k+1}$-operations. It allows us to prove the following result, analyzing associative quotients of the dual Steenrod algebra. The case of $p=2$, where killing a generator in an $\E_1$-fashion precisely eliminated the free $\E_2$-algebra on it, was a mystery that helped us identify the main result of this paper.  
\begin{theorem*}[\ref{thm:oddsteenrod}]
  Let $p$ be an odd prime and $\mc A = H\F_p \wedge H\F_p$ be the commutative ring spectrum whose coefficient ring is the dual Steenrod algebra
  \[
    \mc A_* \cong \Lambda[\tau_0, \tau_1, \dots] \otimes \F_p[\xi_1, \xi_2, \dots].
  \]
  Then the free quotient $\mc A \aquot \tau_n$ in the category of associative $\mc A$-algebras has coefficient ring given by the quotient
  \[
    \mc A_\ast / (\xi_{n+1}, \xi_{n+2}, \dots, \tau_n, \tau_{n+1}, \dots),
  \]
  and $\mc A \aquot \bar \tau_n$ has coefficient ring given by the quotient
  \[
    \mc A_\ast / (\bar \xi_{n+1}, \bar \xi_{n+2}, \dots, \bar \tau_n, \bar \tau_{n+1}, \dots).
  \]
\end{theorem*}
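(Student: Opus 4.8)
The plan is to deduce the theorem from the $\E_k$-pushout theorem together with the known computation of the homology of free $\E_2$-algebras and Steinberger's computation of the Dyer--Lashof action on the dual Steenrod algebra. Since $\mc A = H\F_p \wedge H\F_p$ is $\E_\infty$, every homotopy class is central, so the central-quotient consequence of Theorem~\ref{thm:Ekpushout} identifies the free associative quotient with a relative tensor over an $\E_2$-algebra:
\[
  \mc A \aquot \tau_n \;\simeq\; \mc A \otimesover{F} \mc A, \qquad F := \Free_{\E_2}^{\mc A}\!\big(\Sigma^{|\tau_n|}\mc A\big) \simeq \mc A \wedge \Free_{\E_2}(S^{|\tau_n|}),
\]
in which one structure map $F \to \mc A$ is the augmentation $\epsilon$ and the other is the $\E_2$-$\mc A$-algebra map $\phi$ classifying $\tau_n \in \pi_{|\tau_n|}\mc A$. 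The statement for $\bar\tau_n = \chi(\tau_n)$ follows by precomposing everything with the $\E_\infty$ self-equivalence of $\mc A$ exchanging the two smash factors, which realizes the antipode on homotopy.

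I would then run the bar spectral sequence $\Tor^{\pi_*F}_{**}(\mc A_*,\mc A_*) \Rightarrow \pi_*\big(\mc A \otimesover{F}\mc A\big)$; it converges and has this $E_2$-page because all the spectra in the bar resolution are free $\mc A$-modules (so the relevant Künneth maps are isomorphisms). Freeness of $\mc A$ as an $H\F_p$-algebra gives $\pi_* F \cong \mc A_* \otimes_{\F_p} E$ with $E := H_*(\Free_{\E_2}(S^{|\tau_n|});\F_p)$; and by the computation of the homology of free $\E_2$-algebras, for $p$ and $|\tau_n|$ odd, $E$ is the free graded-commutative $\F_p$-algebra $\Lambda[t_0,t_1,\dots]\otimes\F_p[y_1,y_2,\dots]$ where $t_0$ is the fundamental class, $t_j$ is the top $\E_2$-Dyer--Lashof operation on $t_{j-1}$, and $y_j = \beta t_j$, with $|t_j| = 2p^{n+j}-1$ and $|y_j| = 2p^{n+j}-2$ --- that is, a copy of the dual Steenrod algebra reindexed to begin in degree $|\tau_n|$. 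Since $\epsilon_* = \mathrm{id}_{\mc A_*}\otimes\epsilon_E$ for the augmentation $\epsilon_E\co E\to\F_p$, the $\epsilon$-side module is $\mc A_* \cong (\mc A_*\otimes E)\otimes_E\F_p$, and change of rings (valid because $\mc A_*\otimes E$ is free over the subring $1\otimes E$) collapses the $E_2$-page to $\Tor^{E}_{**}(\mc A_*,\F_p)$, where $\mc A_*$ is viewed as an $E$-module along the ring map $g\co E\to\mc A_*$ obtained by restricting $\phi_*$ to $1\otimes E$.

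The heart of the argument is to show that $\mc A_*$ is \emph{free} over $E$ via $g$, which forces $\Tor^E$ to be concentrated in homological degree $0$. As $\phi$ is a map of $\E_2$-algebras it commutes with the $\E_2$-Dyer--Lashof operations, so $g(t_0) = \tau_n$ and $g(t_j)$, $g(y_j)$ are the corresponding iterated operations and Bocksteins applied to $\tau_n$ in $\mc A_*$. Steinberger's computation of the Dyer--Lashof action on $\mc A_* = H_*(H\F_p;\F_p)$ then shows that these equal $\tau_{n+j}$ and $\xi_{n+j}$ up to a unit and decomposables; equivalently their images in the indecomposables $\mc A_*^+/(\mc A_*^+)^2$ are nonzero multiples of $[\tau_{n+j}]$ and $[\xi_{n+j}]$. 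Because $\mc A_*$ is the free graded-commutative algebra on $\{\tau_i\}_{i\ge 0}\cup\{\xi_i\}_{i\ge 1}$, the classes $\{g(t_j),g(y_j)\}_j\cup\{\tau_0,\dots,\tau_{n-1},\xi_1,\dots,\xi_n\}$ have linearly independent indecomposables and hence form a free graded-commutative generating set of $\mc A_*$; consequently $g$ is injective and $\mc A_*$ is free over $g(E)$ on the monomials in $\tau_0,\dots,\tau_{n-1},\xi_1,\dots,\xi_n$. Therefore the spectral sequence degenerates with no extension problems and
\[
  \pi_*\big(\mc A\aquot\tau_n\big) \;\cong\; \mc A_*\otimes_{g(E)}\F_p \;\cong\; \Lambda[\tau_0,\dots,\tau_{n-1}]\otimes\F_p[\xi_1,\dots,\xi_n] \;=\; \mc A_*/(\xi_{n+1},\xi_{n+2},\dots,\tau_n,\tau_{n+1},\dots),
\]
and likewise with conjugate generators for $\mc A\aquot\bar\tau_n$.

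I expect the main obstacle to be precisely this last step: extracting from Steinberger's formulas that the classes $g(t_j), g(y_j)$ carry the expected leading terms, so that they extend to a free generating set of $\mc A_*$ --- it is this that makes $\mc A_*$ flat over $E$ and collapses the spectral sequence. One must also take some care that the decomposable correction terms, though they may involve low-index generators, do not affect the isomorphism type of the quotient $\mc A_*/(g(E^+))$ (they are absorbed once one passes to the quotient). Everything else --- the reduction to the relative $\E_2$-tensor, the structure of $\pi_*F$, and the change-of-rings collapse --- is formal once Theorem~\ref{thm:Ekpushout} and the homology of free $\E_2$-algebras are in hand.
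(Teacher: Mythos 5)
Your overall skeleton is the same as the paper's: reduce the free associative quotient to a relative smash product over a free $\E_2$-algebra via the pushout theorem, read off the homology of the free $\E_2$-algebra on an odd-degree class from Cohen--Lada--May, feed in Steinberger's Dyer--Lashof computation, and collapse the K\"unneth/bar spectral sequence by showing $\mc A_*$ is free over the image of the coefficient ring. (Two cosmetic differences: the paper forms the free $\E_2$-algebra over $H\F_p$ rather than over $\mc A$, so the K\"unneth spectral sequence lands directly on $\Tor^{E}(\mc A_*,\F_p)$ with no change-of-rings step; and it phrases flatness as ``$\mc A_*$ is free over the sub-polynomial-exterior algebra generated by the images'' rather than via indecomposables.)

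The one substantive divergence is the order in which you treat $\tau_n$ versus $\bar\tau_n$, and that is exactly where your argument has a gap. Steinberger's formulas are exact only for the conjugate generators: $Q_{1/2}\bar\tau_m=\bar\tau_{m+1}$ and $\beta Q_{1/2}\bar\tau_m=\bar\xi_{m+1}$ on the nose, so the image of $E$ in $\mc A_*$ is literally the free subalgebra $\Lambda[\bar\tau_n,\bar\tau_{n+1},\dots]\otimes\F_p[\bar\xi_{n+1},\dots]$ and the kernel ideal is manifestly $(\bar\tau_{\ge n},\bar\xi_{>n})$. For the unconjugated $\tau_n$ you only get $g(t_j)=\pm\tau_{n+j}+(\text{decomposable})$, and while your indecomposables argument correctly yields freeness of $\mc A_*$ over $g(E)$ (hence collapse and the right \emph{isomorphism type} of the quotient), it does not identify the kernel of $\mc A_*\to\pi_*(\mc A\aquot\tau_n)$ with the stated ideal $(\tau_n,\tau_{n+1},\dots,\xi_{n+1},\dots)$: the decomposable corrections to $Q^I\tau_n$ can a priori involve monomials purely in the low-index generators $\tau_{<n},\xi_{\le n}$, which do not lie in that ideal, so the ideal $(g(t_j),g(y_j))$ need not equal $(\tau_{\ge n},\xi_{>n})$ without a further computation. (Two ideals can both be complements to the same free subalgebra without being equal.) Since the theorem --- and the remark following it --- asserts the specific quotient of $\mc A_*$, this needs to be closed. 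The clean fix is the paper's: use the swap automorphism of $H\F_p\wedge H\F_p$ to reduce to the $\bar\tau_n$ case \emph{first}, prove that case exactly, and then transport back; i.e., run your final reduction in the opposite direction. You would also want to record the small lemma that Dyer--Lashof operations preserve the decomposables ideal (Cartan formula plus instability), which your ``up to decomposables'' step silently uses.
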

This result relies implicitly on calculations of the Dyer--Lashof operations by Steinberger \cite{Hinfinity}.

\subsection{Acknowledgements}

The authors would like to thank 
Clark Barwick,
Rosona Eldred,
Rune Haugseng,
Piotr Pstr\c{a}gowski,
and
Dylan Wilson
for discussions related to this material. We also owe a significant debt to Jacob Lurie, who directed us to $\E_k$-centers and suggested that Proposition~\ref{prop:EZmaps} should be true.

This results in this paper originated in a joint project with Agn\`es Beaudry, Xiaolin Danny Shi, and Mingcong Zeng, and would not have been possible without their support.

\section{Functor categories}\label{sec:functors}

Our goal in this section is to collect together preliminary results about functor $\infty$-categories, with a particular emphasis on computing mapping spaces.

\begin{reminder}
  Recall that for an $\infty$-category $\mc C$, we have the \emph{arrow category} $\Ar(\mc C) = \Fun(\Delta^1, \mc C)$, and we have the \emph{twisted arrow category} $\TwArr(\mc C)$. In both cases, the objects are maps in $\mc C$. Morphisms $f \to g$ in $\Ar(\mc C)$ are commutative diagrams
  \[
    \begin{tikzcd}
      X \ar[r,"f"] \ar[d] & Y \ar[d] \\
      Z \ar[r,"g",swap] & W,
    \end{tikzcd}
  \]
  while morphisms $f \to g$ in $\TwArr(\mc C)$ are commutative diagrams
  \[
    \begin{tikzcd}
      X \ar[r,"f"] & Y \ar[d] \\
      Z \ar[r,"g",swap] \ar[u] & W.
    \end{tikzcd}
  \]
  given by precomposition and postcomposition with morphisms in $\mc C$. The source and target determine forgetful maps $\Ar(\mc C) \to \mc C \times \mc C$ and $\TwArr(\mc C) \to \mc C^{op} \times \mc C$.
\end{reminder}

\begin{proposition}\label{prop:diagrammaps}
  Given functors $\alpha, \beta\co K \to \mc C$, in the functor $\infty$-category $\Fun(K,\mc C)$ we have
  \[
    \Map_{\Fun(K, \mc C)} (\alpha, \beta) \simeq \holim_{f\co i \to j} \Map_{\mc C}(\alpha(i), \beta(j)).
  \]
  Here the homotopy limit is taken over $f$ in the twisted arrow category $\TwArr(K)$.
\end{proposition}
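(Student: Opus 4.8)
The plan is to write down a natural comparison map
\[
  \Map_{\Fun(K,\mc C)}(\alpha,\beta)\longrightarrow\holim_{f\co i\to j}\Map_{\mc C}(\alpha(i),\beta(j))
\]
sending a natural transformation $\eta\co\alpha\to\beta$ to the point whose $(f\co i\to j)$-component is the composite $\alpha(i)\xrightarrow{\eta_i}\beta(i)\xrightarrow{\beta(f)}\beta(j)$, and to show it is an equivalence of spaces. The first step is to reduce to the case $\mc C=\mc S$, the $\infty$-category of spaces. The Yoneda embedding $\mc C\hookrightarrow\mc P(\mc C)=\Fun(\mc C^{op},\mc S)$ is fully faithful and preserves every mapping space occurring on either side, so it suffices to treat targets of the form $\mc P(\mc D)$. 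For such a target, currying gives $\Fun(K,\mc P(\mc D))\simeq\Fun(K\times\mc D^{op},\mc S)$, and since $\TwArr(-)$ commutes with finite products (and carries opposites to opposites) one has $\TwArr(K\times\mc D^{op})\simeq\TwArr(K)\times\TwArr(\mc D^{op})$. Applying the statement for $\mc S$ once with indexing category $K\times\mc D^{op}$ and once with indexing category $\mc D^{op}$, and carrying out the $\mc D$-indexed part of the homotopy limit first, deduces the statement for $\mc P(\mc D)$. Thus it is enough to prove the proposition when $\mc C=\mc S$, for an arbitrary $K$.

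When $\mc C=\mc S$, both sides, regarded as functors of $\alpha$, carry colimits to limits: the left side visibly, and the right side because $\Map_{\mc S}(-,\beta(j))$ turns colimits into limits and homotopy limits commute with one another. By the co-Yoneda lemma every $\alpha\co K\to\mc S$ is a colimit of corepresentable functors $\Map_K(k,-)$, so it suffices to check that the comparison map is an equivalence when $\alpha=\Map_K(k,-)$. There the left side is $\beta(k)$ by the Yoneda lemma in $\Fun(K,\mc S)$, and it remains to identify the right side with $\beta(k)$.

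Rewrite $\Map_{\mc S}(\Map_K(k,i),\beta(j))$ as the limit of the constant diagram with value $\beta(j)$ indexed by the space $\Map_K(k,i)$. The assignment $(f\co i\to j)\mapsto\Map_K(k,i)$ is a functor $\TwArr(K)^{op}\to\mc S$ whose Grothendieck construction is a right fibration $\mc F\to\TwArr(K)$, with $\mc F$ the $\infty$-category of pairs $(f\co i\to j,\ g\co k\to i)$. Right fibrations are Cartesian fibrations, hence proper, so the homotopy limit over $\TwArr(K)$ may be computed fibrewise, yielding
\[
  \holim_{f\co i\to j}\Map_{\mc S}(\Map_K(k,i),\beta(j))\;\simeq\;\holim_{\mc F}\tilde\beta,
  \qquad\tilde\beta(f,g)=\beta(j).
\]
Now $\tilde\beta$ factors as $\mc F\xrightarrow{\ \pi\ }K_{k/}\to\mc S$, where $\pi(f\co i\to j,\ g\co k\to i)=(fg\co k\to j)$ and the second functor is $(k\to j)\mapsto\beta(j)$. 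If $\pi$ is coinitial then $\holim_{\mc F}\tilde\beta\simeq\holim_{K_{k/}}\bigl[(k\to j)\mapsto\beta(j)\bigr]$, and the latter is the value $\beta(k)$ at the initial object $\id_k$ of $K_{k/}$, which is what we want. Coinitiality of $\pi$ amounts, by the standard criterion, to weak contractibility of each comma $\infty$-category $\mc F\times_{K_{k/}}(K_{k/})_{/h}$, for $h\co k\to j$ an object of $K_{k/}$. One identifies this comma category with the $\infty$-category of factorizations of $h$ into three composable arrows $k\to\bullet\to\bullet\to j$, which is weakly contractible because it has an initial object — the factorization whose first two arrows are identities.

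The step that requires real care is the coinitiality argument of the last paragraph, together with the bookkeeping of variances and of the chosen twisted-arrow convention throughout — in particular verifying that $\TwArr(-)$ behaves well with respect to products and opposites, and that the comparison map is natural in both variables. One could alternatively invoke the known fact that mapping spaces in functor $\infty$-categories are computed by ends, as in treatments of $\infty$-categorical Day convolution; we have preferred to outline a self-contained argument.
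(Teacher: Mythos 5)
Your overall route (reduce to $\mc C=\mc S$ via the Yoneda embedding and currying, reduce to corepresentable $\alpha$ by co-Yoneda, then identify the limit by a coinitiality argument) is viable and is genuinely different from the paper, which does not prove this statement but simply cites Glasman and Gepner--Haugseng--Nikolaus. However, the step you yourself single out as the crux fails as written: the comma category $\mc F\times_{K_{k/}}(K_{k/})_{/h}$, whose objects are factorizations $k\xrightarrow{g} i\xrightarrow{f} j'\xrightarrow{c} j$ of $h$, does \emph{not} have an initial object in general, because its morphisms are twisted: a morphism from $(g,f,c)$ to $(g',f',c')$ consists of maps $i'\to i$ and $j'\to j''$ (the inner object is contravariant, the outer one covariant), so a map out of your proposed object $(\id_k,\id_k,h)$ to $(g',f',c')$ would require a retraction $i'\to k$ of $g'$, which rarely exists. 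Concretely, for $K=\Delta^1$, $k=0$, and $h$ the nonidentity arrow, the comma category is the cospan $(0\to0\to0\to1)\to(0\to0\to1\to1)\leftarrow(0\to1\to1\to1)$: weakly contractible, but with no initial (or terminal) object, and in particular your candidate object is not initial.

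The contractibility you need is nevertheless true, so the argument can be repaired, but by a different mechanism. For instance: the comma category is the total space of a left fibration over $(K_{k/})^{op}\times(K_{k/})_{/h}$ classified by $(x,\,y\to h)\mapsto\Map_{K_{k/}}(x,y)$, so its weak homotopy type is the colimit of that functor; doing the slice variable first (using the terminal object $\id_h$) leaves the colimit over $(K_{k/})^{op}$ of the representable presheaf $\Map_{K_{k/}}(-,h)$, which is the weakly contractible space $|(K_{k/})_{/h}|$. Equivalently, what you are asserting is the known lemma that the target projection $\TwArr(C)\to C$ (in the paper's convention for $\TwArr$) is coinitial, and it should be proved this way or cited, not justified by an initial object. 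The remaining steps are fine modulo the naturality bookkeeping you acknowledge, with one smaller quibble: ``right fibrations are proper'' is not the relevant fact for computing the limit fibrewise; what you want is the fibrewise formula for right Kan extension along a Cartesian fibration together with the fact that the limit over the base of that Kan extension recovers the limit over the total category.
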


\begin{proof}
  This is shown by Glasman \cite[Lemma~2.3]{GlasmanTHH} (and, later, Gepner--Haugseng--Nikolaus \cite[Proposition~5.1]{GepnerHaugsengNikolausFibrations}), where it is equivalently described as an end
  \[
  \int^{i \in K} \Map_{\mc C}(\alpha(i), \beta(i)).\qedhere
  \]
\end{proof}

\begin{corollary}\label{cor:arrowmaps}
  Suppose that we have maps $f\co X_0 \to X_1$ and $g\co Y_0 \to Y_1$ in $\mc C$, viewed as objects in $\Ar(\mc C)$. Then there is a homotopy pullback diagram of function spaces:
  \[
    \begin{tikzcd}
      \Map_{\Ar(\mc C)}(f,g) \ar[r] \ar[d] &
      \Map_{\mc C}(X_0, Y_0) \ar[d] \\
      \Map_{\mc C}(X_1, Y_1) \ar[r] &
      \Map_{\mc C}(X_0, Y_1)
    \end{tikzcd}
  \]
\end{corollary}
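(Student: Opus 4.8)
The plan is to deduce this directly from Proposition~\ref{prop:diagrammaps} applied to the indexing category $K = \Delta^1$, using the observation that the twisted arrow category of $\Delta^1$ is a small, easily understood poset. Concretely, $\TwArr(\Delta^1)$ has three objects --- the two identity morphisms $\id_0$ and $\id_1$, and the nonidentity arrow $0\to 1$ --- together with two nonidentity morphisms $\id_0 \leftarrow (0\to1) \rightarrow \id_1$ (precomposition with $0\to 1$ on one side, postcomposition on the other). Thus $\TwArr(\Delta^1)$ is a cospan $\id_0 \to (0\to 1) \leftarrow \id_1$ — wait, I should double-check the variance: in $\TwArr(\mc C)$ a morphism from the object ``$0\to 1$'' to the object ``$\id_0$'' is a commuting square of the shape in the reminder, which amounts to precomposition; similarly there is a morphism from ``$0\to 1$'' to ``$\id_1$''. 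So $\TwArr(\Delta^1)$ is the span $\id_0 \leftarrow (0\to 1) \rightarrow \id_1$. Under the functor $f \mapsto \Map_{\mc C}(\alpha(i),\beta(j))$ for $\alpha = f$ (the arrow $X_0\to X_1$) and $\beta = g$ (the arrow $Y_0 \to Y_1$), the object $\id_0$ goes to $\Map_{\mc C}(X_0,Y_0)$, the object $\id_1$ goes to $\Map_{\mc C}(X_1,Y_1)$, and the object $0\to1$ goes to $\Map_{\mc C}(X_0,Y_1)$; the span structure supplies precisely the postcomposition map $\Map_{\mc C}(X_0,Y_0)\to \Map_{\mc C}(X_0,Y_1)$ and the precomposition map $\Map_{\mc C}(X_1,Y_1)\to \Map_{\mc C}(X_0,Y_1)$ induced by $g$ and $f$ respectively.

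The next step is to identify $\holim$ over this span diagram with the homotopy pullback. A homotopy limit over the span category $(\bullet \to \bullet \leftarrow \bullet)$ is by definition (or by a standard cofinality/coherence argument) the homotopy pullback of the two legs; since $\Map_{\Fun(\Delta^1,\mc C)}(f,g) \simeq \Map_{\Ar(\mc C)}(f,g)$ by definition of $\Ar(\mc C) = \Fun(\Delta^1,\mc C)$, Proposition~\ref{prop:diagrammaps} then gives exactly the claimed homotopy pullback square
\[
  \begin{tikzcd}
    \Map_{\Ar(\mc C)}(f,g) \ar[r] \ar[d] &
    \Map_{\mc C}(X_0, Y_0) \ar[d] \\
    \Map_{\mc C}(X_1, Y_1) \ar[r] &
    \Map_{\mc C}(X_0, Y_1).
  \end{tikzcd}
\]
One should also check that the maps in this square are the evident ones (source, target, and composition with $f$ and $g$), which follows by tracing through the identification of the functor on $\TwArr(\Delta^1)$ above.

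The only genuinely delicate point is pinning down the structure of $\TwArr(\Delta^1)$ and the variance of the forgetful functor $\TwArr(K) \to K^{op}\times K$ so that the legs of the span are the correct maps (in particular that it is a span and not a cospan, and that the component at $\id_0$ involves $Y_0$ as the target rather than the source). I expect this to be the main --- though still routine --- obstacle; everything else is a formal consequence of Proposition~\ref{prop:diagrammaps} together with the standard fact that a homotopy limit over a span diagram is a homotopy pullback. Alternatively, and perhaps more cleanly, one can cite the fact that $\TwArr(\Delta^1)$ is (equivalent to) the poset used to build the pullback and invoke the functoriality of $\Ar(\mc C) \mapsto \mc C$ in the source and target variables directly; I would mention this as a remark but carry out the $\TwArr(\Delta^1)$ computation as the main line of argument.
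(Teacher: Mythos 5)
Your approach is the same as the paper's: identify $\TwArr(\Delta^1)$ explicitly and apply Proposition~\ref{prop:diagrammaps}, observing that the homotopy limit over the resulting three-object poset is the homotopy pullback. The final square you write down is correct, but you resolve the variance question --- which you rightly flag as the one delicate point --- the wrong way around in the text. Under the paper's convention, a morphism $f \to g$ in $\TwArr(\mc C)$ is a square with legs $Z \to X$ and $Y \to W$; so a morphism $(0\to 1) \to \id_0$ would require a map $1 \to 0$ in $\Delta^1$, which does not exist, whereas $\id_0 \to (0\to 1)$ and $\id_1 \to (0\to 1)$ do exist. Hence $\TwArr(\Delta^1)$ is the cospan-shaped poset $\bullet \to \bullet \from \bullet$ with $(0\to 1)$ terminal (this is also what makes $f \mapsto \Map_{\mc C}(\alpha(i),\beta(j))$ covariant on $\TwArr$), not the span $\id_0 \from (0\to 1) \to \id_1$ you assert; had it been the latter, $(0\to 1)$ would be initial and the homotopy limit would collapse to $\Map_{\mc C}(X_0,Y_1)$. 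Your subsequent description of the two legs as maps \emph{into} $\Map_{\mc C}(X_0,Y_1)$ silently corrects this, and with that one reversal fixed your argument coincides with the paper's proof.
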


\begin{proof}
  This follows by identifying the twisted arrow category of $\Delta^1 = \{0 \to 1\}$, which is an ordinary discrete category, with the 3-object poset
  \[
  \begin{tikzcd}
    & (0 \to 0) \ar[d] \\
    (1 \to 1) \ar[r] & (0 \to 1)
  \end{tikzcd}
  \]
  parametrizing cospans.
\end{proof}

\begin{lemma}\label{lem:sectionmaps}
  Suppose that $p\co \mc C \to K$ is a fibration of $\infty$-categories and $\alpha, \beta\co K\to \mc C$ are sections. Then there is an equivalence
  \[
    \Map_{\Fun_{K}(K,\mc C)} (\alpha, \beta) \simeq \holim_{f\co i \to j} \Map_{\mc C}(\alpha(i), \beta(j))_{f}.
  \]
  Here the limit is taken over $f\co i \to j$ in the twisted arrow category $\TwArr(K)$, and $\Map_{\mc C}(X, Y)_{f}$ denotes the homotopy fiber of $\Map_{\mc C}(X,Y) \to \Map_{K}(pX, pY)$ over $f$.
\end{lemma}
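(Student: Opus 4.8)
The plan is to deduce this from Proposition~\ref{prop:diagrammaps} by a fibered refinement, exactly parallel to how Corollary~\ref{cor:arrowmaps} was deduced. First I would set up the comparison: a section $\alpha\co K \to \mc C$ is in particular a functor, so Proposition~\ref{prop:diagrammaps} already gives $\Map_{\Fun(K,\mc C)}(\alpha,\beta) \simeq \holim_{f\co i\to j}\Map_{\mc C}(\alpha(i),\beta(j))$, with the homotopy limit over $\TwArr(K)$. The mapping space $\Map_{\Fun_K(K,\mc C)}(\alpha,\beta)$ in the category of sections sits in a homotopy pullback
\[
  \begin{tikzcd}
    \Map_{\Fun_K(K,\mc C)}(\alpha,\beta) \ar[r] \ar[d] & \Map_{\Fun(K,\mc C)}(\alpha,\beta) \ar[d] \\
    \{\mathrm{id}_K\} \ar[r] & \Map_{\Fun(K,K)}(p\alpha,p\beta) = \Map_{\Fun(K,K)}(\id_K,\id_K),
  \end{tikzcd}
\]
so I would like to realize the right-hand vertical map as the map induced by postcomposition with $p$, and then compare it levelwise to the map $\Map_{\mc C}(\alpha(i),\beta(j)) \to \Map_K(pi,pj) = \Map_K(i,j)$ appearing in the statement, whose homotopy fiber over $f$ is $\Map_{\mc C}(\alpha(i),\beta(j))_f$ by definition.

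The key step is therefore to identify $\Map_{\Fun(K,K)}(\id_K,\id_K)$, again via Proposition~\ref{prop:diagrammaps}, as $\holim_{f\co i\to j}\Map_K(i,j)$, and to check that under this identification the tautological point $\id_K$ corresponds to the canonical cone whose component over $f\co i\to j$ is the point $f \in \Map_K(i,j)$. Granting this, the desired mapping space is the homotopy fiber of $\holim_{f}\Map_{\mc C}(\alpha(i),\beta(j)) \to \holim_f \Map_K(i,j)$ over the point $(f)_f$. Since homotopy limits commute with homotopy fibers (a homotopy limit over $\TwArr(K)$ is a limit in spaces, which is right exact enough for this purpose — more precisely, the fiber of a map of $\TwArr(K)$-shaped cones over a cone-point is the homotopy limit of the fibers), this homotopy fiber is $\holim_f \mathrm{fib}\big(\Map_{\mc C}(\alpha(i),\beta(j)) \to \Map_K(i,j)\big)_f = \holim_f \Map_{\mc C}(\alpha(i),\beta(j))_f$, which is the claim. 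I would phrase the fiber-commutes-with-limit step as: a diagram of pointed spaces $F \to G$ over $\TwArr(K)$ with $G$ the constant-base diagram built from $\id_K$ has $\holim$ of fibers equal to the fiber of $\holim F \to \holim G$ over the chosen point, which is the standard "homotopy limits preserve fiber sequences" fact once the cone-point is pinned down.

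The main obstacle I anticipate is the bookkeeping in the previous paragraph — specifically, verifying naturality of the Glasman/GHN equivalence of Proposition~\ref{prop:diagrammaps} in the functors $\alpha,\beta$ well enough that postcomposition with $p\co \mc C \to K$ is carried to the evident levelwise map $\Map_{\mc C}(\alpha(i),\beta(j)) \to \Map_K(i,j)$, together with pinning down the image of $\id_K$ as the tautological cone. This is genuinely where the content lies: everything else is formal manipulation of homotopy pullbacks and the exchange of $\holim$ with homotopy fibers. If one wants to avoid chasing through the construction in \cite{GlasmanTHH}, an alternative is to work directly with the straightening/unstraightening of $p$: a section of $p\co \mc C \to K$ is a functor $K \to \mc C$ lying over $\id_K$, and $\Fun_K(K,\mc C)$ is the fiber of $\Fun(K,\mc C) \to \Fun(K,K)$ over $\id_K$, so the whole statement is the assertion that Proposition~\ref{prop:diagrammaps} is compatible with this fiber, which one can check by naturality of the twisted-arrow formula along the functor $p_*\co \Fun(K,\mc C) \to \Fun(K,K)$ and the fact that $p_*$ on mapping spaces is computed levelwise. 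Either route reduces the lemma to the single naturality check plus the general principle that homotopy limits are left exact.
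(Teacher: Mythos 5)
Your proposal is correct and follows essentially the same route as the paper: identify $\Fun_K(K,\mc C)$ as the fiber of $\Fun(K,\mc C) \to \Fun(K,K)$ over $\id_K$, use Proposition~\ref{prop:diagrammaps} to write both mapping spaces as twisted-arrow homotopy limits with $\id_K$ corresponding to the tautological cone $\holim_f\{f\}$, and commute the homotopy fiber with the homotopy limit. The naturality point you flag is real but is exactly the step the paper also treats as formal.
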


\begin{proof}
  Under the identification of Proposition~\ref{prop:diagrammaps}, the identity natural transformation $\id_{K} \to \id_{K}$ in the functor category $\Fun(K, K)$ is represented by the element
  \[
    \{\id_K\} \simeq \holim_{f\co i \to j} \{f\} \subset \holim_{f\co i \to j} \Map_{K}(i,j).
  \]
  The section category is a (homotopy) fiber product:
  \[
    \Fun_{K}(K, \mc C) = \Fun(K, \mc C) \timesover{\Fun(K, K)} \{\id_{K}\}
  \]
  Therefore, because mapping spaces in this pullback $\infty$-category are (homotopy) fiber products of function spaces and these commute with homotopy limits, the space $\Map_{\Fun_{K}(K, \mc C)}(\alpha, \beta)$ is the homotopy fiber product
  \[
    \holim_{f\co i \to j} \left(\Map_{\mc C}(\alpha(i), \beta(j)) \timesover{\Map_{K}(i,j)} \{f\}\right).
  \]
  By definition, this is the homotopy limit of $\Map_{\mc C}(\alpha(i),\beta(j))_{f}$.
\end{proof}

\begin{corollary}\label{cor:cocartesiansectionmaps}
  Suppose that $p\co \mc C \to K$ is a coCartesian fibration of $\infty$-categories, allowing any map $f\co i \to j$ in $K$ to be  lifted to a functor $f_!\co \mc C_i \to \mc C_j$ from the fiber over $i$ to the fiber over $j$. Then for any sections $\alpha, \beta\co K\to \mc C$ there is an equivalence
  \[
    \Map_{\Fun_{K}(K,\mc C)} (\alpha, \beta) \simeq \holim_{f\co i \to j} \Map_{\mc C_j}(f_! \alpha(i), \beta(j)).
  \]
\end{corollary}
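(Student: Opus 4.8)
The plan is to bootstrap from Lemma~\ref{lem:sectionmaps}. A coCartesian fibration is in particular a categorical fibration, so that lemma already supplies an equivalence
\[
\Map_{\Fun_K(K,\mc C)}(\alpha,\beta)\;\simeq\;\holim_{f\co i\to j}\;\Map_{\mc C}(\alpha(i),\beta(j))_f,
\]
the homotopy limit running over $\TwArr(K)$, where $\Map_{\mc C}(X,Y)_f$ denotes the homotopy fiber of $\Map_{\mc C}(X,Y)\to\Map_K(pX,pY)$ over $f$. So it suffices to produce a natural equivalence, of $\mathcal S$-valued functors on $\TwArr(K)$, between the diagram $f\mapsto\Map_{\mc C}(\alpha(i),\beta(j))_f$ and the diagram $f\mapsto\Map_{\mc C_j}(f_!\alpha(i),\beta(j))$; applying $\holim_{\TwArr(K)}$ then gives the result.

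For a fixed $f\co i\to j$, the identification is precisely the universal property of coCartesian morphisms. Choose a $p$-coCartesian lift $\eta\co\alpha(i)\to f_!\alpha(i)$ of $f$. For any $Y\in\mc C$ the square
\[
\begin{tikzcd}
\Map_{\mc C}(f_!\alpha(i),Y)\ar[r,"\eta^\ast"]\ar[d] & \Map_{\mc C}(\alpha(i),Y)\ar[d]\\
\Map_K(j,pY)\ar[r,"{(-)\circ f}",swap] & \Map_K(i,pY)
\end{tikzcd}
\]
is a homotopy pullback \cite[\S 2.4]{LurieHTT}. Take $Y=\beta(j)$, so that $pY=j$, and pass to homotopy fibers of the two vertical maps over $\id_j\in\Map_K(j,j)$ and its image $f\in\Map_K(i,j)$: the top map restricts to an equivalence on these fibers. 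Since $\mc C_j=\mc C\times_K\{j\}$, the fiber of the left vertical over $\id_j$ is $\Map_{\mc C_j}(f_!\alpha(i),\beta(j))$ and the fiber of the right vertical over $f$ is $\Map_{\mc C}(\alpha(i),\beta(j))_f$, giving the desired equivalence objectwise.

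It remains to make this equivalence natural in $f\in\TwArr(K)$, and the cleanest route is to build the comparison as a genuine natural transformation first and verify pointwise equivalence afterwards. Writing $\phi\co K\to\mathrm{Cat}_\infty$ for the functor classifying $p$ under straightening, the sections $\alpha$ and $\beta$ correspond to natural transformations from the constant point-valued functor into $\phi$; from this data the assignment $f\mapsto\Map_{\phi(j)}(\phi(f)\alpha(i),\beta(j))$ is manifestly a functor $\TwArr(K)\to\mathcal S$, a morphism $f\to f'$ in $\TwArr(K)$ (given by $i'\to i$ and $j\to j'$) acting by applying the relevant $\phi$-pushforward, precomposing with the structure map $\phi(i'\to i)\alpha(i')\to\alpha(i)$, and postcomposing with $\phi(j\to j')\beta(j)\to\beta(j')$. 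Under the equivalence $\mc C_j\simeq\phi(j)$ carrying $f_!$ to $\phi(f)$ this recovers the right-hand diagram, the coCartesian comparison above is a map of functors out of $\TwArr(K)$, and it is a pointwise equivalence by the preceding paragraph; passing to $\holim_{\TwArr(K)}$ finishes the proof. I expect the only real work to lie in this last step: the objectwise equivalence is immediate from the universal property of coCartesian morphisms, and the genuine care goes into organizing the coherences so that one has a bona fide natural transformation rather than an objectwise family of equivalences — exactly the bookkeeping that straightening/unstraightening is designed to handle.
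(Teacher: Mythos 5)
Your argument is essentially the paper's own proof: both reduce to Lemma~\ref{lem:sectionmaps} and then identify the fiber $\Map_{\mc C}(\alpha(i),\beta(j))_f$ with $\Map_{\mc C_j}(f_!\alpha(i),\beta(j))$ via the defining homotopy pullback square for coCartesian morphisms \cite[Proposition~2.4.4.3]{LurieHTT}. Your additional paragraph on making the objectwise equivalence natural in $f$ via straightening is a legitimate point of care that the paper elides, but it does not change the route.
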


\begin{proof}
  The functor $f_!$ is defined so that there is a map $\eta_x\co x \to f_!(x)$ over $f$ with a universal property: for any $y$ in $\mc C$, the diagram
  \[
    \begin{tikzcd}
      \Map_{\mc C}(f_! x, y) \ar[r,"(-)\circ \eta_x"] \ar[d] & \Map_{\mc C}(x,y) \ar[d] \\
      \Map_K(j,py) \ar[r,"(-)\circ f",swap] & \Map_K(i, py)
    \end{tikzcd}
  \]
  is a homotopy pullback \cite[Proposition~2.4.4.3]{LurieHTT}. In particular, for any $y \in \mc C_j$, taking the fiber over $\{\id_j\} \in \Map_{K}(j, j) = \Map_{K}(j,py)$ shows that the map
  \[
    \Map_{\mc C_j}(f_! x, y) \to \Map_{\mc C}(x,y)_f
  \]
  is an equivalence. We then substitute this into Lemma~\ref{lem:sectionmaps}.
\end{proof}

The next lemma is typically part of the equivalence between $\infty$-categories and complete Segal spaces; it is included for reference.

\begin{lemma}\label{lem:moduliequiv}
  A functor $f\co \mc C \to \mc D$ between $\infty$-categories is an equivalence if the induced maps $\mc C^\simeq \to \mc D^\simeq$ and $\Ar(\mc C)^\simeq \to \Ar(\mc D)^\simeq$ of maximal subgroupoids are weak equivalences of Kan complexes.
\end{lemma}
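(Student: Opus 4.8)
The plan is to reduce the statement to the comparison between $\infty$-categories and complete Segal spaces. Write $N_\bullet$ for the fully faithful functor $\mathrm{Cat}_\infty \to \Fun(\Delta^{op}, \mathcal S)$ (with $\mathcal S$ the $\infty$-category of spaces) that sends an $\infty$-category $\mc D$ to the simplicial space $N_n\mc D = \big(\Fun(\Delta^n,\mc D)\big)^\simeq$, whose essential image consists of the complete Segal spaces; this is the $\infty$-categorical form of the Joyal--Tierney theorem. Since $N_\bullet$ is fully faithful, a functor $f\co \mc C\to\mc D$ is an equivalence if and only if $N_\bullet f$ is an equivalence in $\Fun(\Delta^{op},\mathcal S)$, and since equivalences in a functor $\infty$-category are detected objectwise it suffices to check that $N_n f\co N_n\mc C \to N_n\mc D$ is an equivalence for every $n\geq 0$. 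For $n=0$ this is exactly the hypothesis that $\mc C^\simeq = N_0\mc C \to N_0\mc D = \mc D^\simeq$ is an equivalence, and for $n=1$ it is the hypothesis that $\Ar(\mc C)^\simeq = N_1\mc C \to N_1\mc D = \Ar(\mc D)^\simeq$ is an equivalence.

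It remains to handle $n\geq 2$, which is where the Segal condition is used. The spine inclusion $\mathrm{Sp}^n = \Delta^{\{0,1\}}\cup_{\Delta^{\{1\}}}\cdots\cup_{\Delta^{\{n-1\}}}\Delta^{\{n-1,n\}}\hookrightarrow\Delta^n$ is inner anodyne, hence an equivalence in $\mathrm{Cat}_\infty$, while $\mathrm{Sp}^n$ is a homotopy pushout of $n$ copies of $\Delta^1$ glued along $\Delta^0$'s. Applying $\Fun(-,\mc C)$ (which, being an internal hom in the cartesian closed $\infty$-category $\mathrm{Cat}_\infty$, sends homotopy pushouts in the first variable to homotopy pullbacks) and then applying the core functor $(-)^\simeq$ (which, being right adjoint to the inclusion $\mathcal S\hookrightarrow\mathrm{Cat}_\infty$ \cite{LurieHTT}, preserves homotopy pullbacks) yields a natural equivalence
\[
  N_n\mc C \;\simeq\; N_1\mc C \timesover{N_0\mc C} N_1\mc C \timesover{N_0\mc C} \cdots \timesover{N_0\mc C} N_1\mc C
\]
with $n$ factors of $N_1\mc C$, the structure maps being the source and target maps $N_1\to N_0$. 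Because homotopy pullbacks preserve equivalences and $N_0 f$, $N_1 f$ are equivalences by hypothesis, $N_n f$ is an equivalence for all $n\geq 2$ as well. Hence $N_\bullet f$ is an objectwise equivalence, so an equivalence, so $f$ is an equivalence of $\infty$-categories.

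I expect the only genuine input to be the Joyal--Tierney comparison, together with the fact that $N_\bullet\mc D$ is always a complete Segal space; everything else is formal bookkeeping with homotopy (co)limits. The one place that warrants a little care is checking that passing to cores is compatible with the Segal decomposition above, i.e.\ that $(-)^\simeq$ carries the relevant homotopy pullbacks to homotopy pullbacks, which is what the right-adjoint property supplies. One could instead run the identical argument in the model-categorical language, using that the nerve is the right adjoint of a Quillen equivalence and that a levelwise weak equivalence between complete Segal spaces (both of which are fibrant) is a weak equivalence in Rezk's model structure; the structure of the proof is the same.
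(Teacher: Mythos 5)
Your proof is correct, but it takes a genuinely different route from the paper's. You run the argument through the Joyal--Tierney comparison: embed $\mathrm{Cat}_\infty$ fully faithfully into simplicial spaces via $N_n(\mc C) = \Fun(\Delta^n,\mc C)^\simeq$, note that the hypotheses are exactly the cases $n=0,1$, and use the Segal decomposition $N_n \simeq N_1 \times_{N_0}\cdots\times_{N_0} N_1$ (spine inclusions are inner anodyne; evaluation at the endpoints is a categorical fibration, so the strict pullback is a homotopy pullback; the core functor is a right adjoint and preserves it) to propagate to all $n$. This is precisely the argument the paper alludes to in the sentence preceding the lemma but deliberately does not give. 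The paper instead proves the statement directly and more elementarily: essential surjectivity follows from the bijection $\pi_0(\mc C^\simeq)\to\pi_0(\mc D^\simeq)$, and full faithfulness follows by exhibiting $\Map_{\mc C}(X,Y)$ as the homotopy fiber of $\Ar(\mc C)^\simeq \to \mc C^\simeq\times\mc C^\simeq$ over $(X,Y)$ and comparing fibers of the two equivalent total spaces over the two equivalent bases. Your version buys conceptual clarity (it explains \emph{why} only the $0$- and $1$-simplex levels of the classification diagram matter, namely the Segal condition) at the cost of invoking the full faithfulness of the complete Segal space nerve as a black box; the paper's version is self-contained, needing only the fiber-sequence description of mapping spaces. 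One minor point worth being honest about in your write-up: conservativity of $N_\bullet$ is very close in content to the lemma itself, so you are genuinely leaning on the independently established Joyal--Tierney theorem rather than deriving the lemma from first principles.
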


\begin{proof}
  We need to show that such a functor $f$ is fully faithful and essentially surjective in the sense of \cite[\S 1.2.10]{LurieHTT}: it induces homotopy equivalences $\Map_{\mc C}(X,Y) \to \Map_{\mc C}(fX, fY)$ and every object in $\mc D$ is equivalent to one in the image. The isomorphism $\pi_0(\mc C^\simeq) \to \pi_0(\mc D^\simeq)$ shows that $f$ induces a bijection on homotopy equivalence classes of objects: in particular, it is essentially surjective. To show full faithfulness, for any objects $X$ and $Y$ of $\mc C$ there is a natural homotopy pullback diagram
  \[
  \begin{tikzcd}
    \Map_{\mc C}(X,Y) \ar[r] \ar[d] &
    \Ar(\mc C)^\simeq \ar[d] \\
    \{X,Y\} \ar[r] &
    \mc C^\simeq \times \mc C^\simeq,
  \end{tikzcd}
  \]
  and similarly for $\mc D$. Therefore, $f$ induces homotopy equivalences $\Map_{\mc C}(X,Y) \to \Map_{\mc D}(fX, fY)$, which shows full faithfulness.
\end{proof}

\section{Centralizers and centers}\label{sec:centralizers}

In categories where there are no ``endomorphism'' objects, such as the category of groups, the \emph{center} of an object $M$ plays a very similar role. We will begin by recalling the definitions from \cite[\S 5.3.1]{LurieHA}.

\begin{reminder}
  A \emph{final object} in an $\infty$-category $\mc C$ is an object $Z$ such that, for any $X$, the mapping space $\Map_{\mc C}(X,Z)$ is contractible.
\end{reminder}

\begin{definition}[{\cite[Definition 5.3.1.2]{LurieHA}}]\label{def:centralizer}
  Suppose that $\mc C$ is monoidal and that $\mc M$ is left-tensored over $\mc C$. A \emph{centralizer} of a morphism $f\co M \to N$ in $\mc M$ is a final object in the $\infty$-category
  \[
    (\mc C_{\unit/}) \times_{(\mc M_{\unit \otimes M/})} (\mc M_{\unit \otimes M // N}).
  \]
  Here the functor $\mc C_{\unit/} \to \mc M_{\unit \otimes M/}$ is given by tensoring with $M$.
  
  More explicitly, it is a final object $\mf Z(f)$ in $\mc C$ in a category of $\E_0$-algebras $Z$ in $\mc C$ with a chosen (coherently) commuting diagram
  \[
    \begin{tikzcd}
      & Z \otimes M \ar[dr] \\
      \unit \otimes M \ar[ur, "\eta \otimes \id"] \ar[rr,"f",swap] && N.
    \end{tikzcd}
  \]
\end{definition}

Centralizers are functorial, in the following sense.

\begin{lemma}\label{lem:centralizerfunctor}
  Suppose that $\mc C$ is monoidal, that $\mc M$ is left-tensored over $\mc C$, and that morphisms $f$ in $\mc M$ have centralizers $\mf Z(f)$ in $\mc C$. Then there exists an essentially unique functor
  \[
    \mf Z\co \TwArr(\mc M) \to \Alg_{\E_0}(\mc C)
  \]
  extending this definition.
\end{lemma}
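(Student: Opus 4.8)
The plan is to realize the centralizer construction as the restriction of a larger functoriality that is essentially tautological. Recall from Definition~\ref{def:centralizer} that a centralizer of $f\co M \to N$ is a final object in the comma-type $\infty$-category
\[
  \mc D(f) := (\mc C_{\unit/}) \times_{(\mc M_{\unit \otimes M/})} (\mc M_{\unit \otimes M /\!/ N}).
\]
The first step is to assemble the assignment $f \mapsto \mc D(f)$ into a single functor out of $\TwArr(\mc M)$. A morphism $f \to f'$ in $\TwArr(\mc M)$ is a diagram $M \xleftarrow{} M' \xrightarrow{f'} N' \xrightarrow{} N$ whose composite is $f$; precomposition with $M' \to M$ (and the induced $\unit \otimes M' \to \unit \otimes M$) and postcomposition with $N' \to N$ induce a functor $\mc D(f') \to \mc D(f)$, coherently. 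Making this precise is cleanest by exhibiting $\mc D$ as a pullback of three functors $\TwArr(\mc M) \to \widehat{\mathrm{Cat}}_\infty$, each built from one of the three undercategory/overcategory constructions, using the forgetful maps $\TwArr(\mc M) \to \mc M^{op} \times \mc M$ and the source/target projections; the functoriality of slice categories in the ambient category and the base object then does the work. Alternatively one can invoke the relative-nerve / straightening formalism: there is a coCartesian-and-Cartesian bifibration over $\TwArr(\mc M)$ whose fiber over $f$ is $\mc D(f)$, and $\mc D$ is the associated functor.

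The second step is to pass from $\mc D$ to $\mf Z$ by taking final objects. The point is that $\mf Z(f)$, as an $\E_0$-algebra equipped with its commuting triangle, is precisely the image of the final object of $\mc D(f)$ under the projection $\mc D(f) \to \mc C_{\unit/} = \Alg_{\E_0}(\mc C)$ (using the standard identification of $\E_0$-algebras with objects under the unit). So I would use the following general principle: if $q\co \mc E \to \mc B$ is a functor such that each fiber $\mc E_b$ admits a final object, and these final objects are preserved by the restriction-of-scalars functors $\mc E_{b'} \to \mc E_b$ along morphisms $b \to b'$ — which here is exactly the statement that centralizers, being final, are preserved under the base-change maps $\mc D(f') \to \mc D(f)$ — then the selection of final objects assembles into a section $\mc B \to \mc E$, well-defined up to contractible choice. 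Composing this section with the projection to $\Alg_{\E_0}(\mc C)$ yields $\mf Z\co \TwArr(\mc M) \to \Alg_{\E_0}(\mc C)$. The essential uniqueness follows because final objects are unique up to contractible choice in each fiber, and the space of such sections is a limit of these contractible spaces; one can cite the fact that the full subcategory of $\mc E$ spanned by fiberwise-final objects is a trivial fibration over $\mc B$ (Lurie, \cite[\S 2.4.4]{LurieHTT}-style arguments for relative adjoints/limits, or the dual of \cite[Proposition~5.2.2.8]{LurieHTT}).

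The main obstacle is the bookkeeping of the first step: verifying that the three comma-category constructions genuinely are functorial on $\TwArr(\mc M)$ with values in $\infty$-categories, and that the pullback defining $\mc D(f)$ is natural — in particular that the functor $\mc C_{\unit/} \to \mc M_{\unit\otimes M/}$ given by $(-)\otimes M$ varies correctly as $M$ varies in $\mc M^{op}$, which is where the contravariance built into $\TwArr$ is used. I would handle this by working model-categorically with explicit slice quasicategories and the pullback of marked simplicial sets, or else by citing the straightening equivalence to reduce everything to the manifestly functorial statement that each of source, target, and "tensor with the source" defines a map of $\infty$-categories, and that forming iterated comma categories is functorial in the diagram of $\infty$-categories one feeds it. Once that is in place, the descent from $\mc D$ to $\mf Z$ is formal. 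The existence hypothesis — that every $f$ has a centralizer — is exactly what makes each fiber admit a final object, and the preservation condition needed for the section to exist is automatic because any map between $\infty$-categories with final objects that arises from a base-change in a comma category need not preserve the final object in general, so strictly speaking one must check this; but here the base-change map $\mc D(f') \to \mc D(f)$ is induced by genuinely composing with the extra morphisms, under which the universal (final) object of $\mc D(f')$ maps to an object of $\mc D(f)$ that one checks is again final by unwinding the universal property of centralizers. That verification — showing the image of a centralizer under the $\TwArr$-functoriality is again a centralizer — is the real content, and it is essentially the observation that the defining mapping-space condition for a centralizer is stable under the evident reindexing.
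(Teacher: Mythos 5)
Your first step---assembling $f \mapsto \mc D(f) = (\mc C_{\unit/}) \times_{(\mc M_{\unit \otimes M/})} (\mc M_{\unit \otimes M /\!/ N})$ into a (co)fibration over $\TwArr(\mc M)$---is the same move the paper makes, and the elaboration via functoriality of slice categories is reasonable. The problem is the step you identify as ``the real content'': the claim that the base-change functor $\mc D(f) \to \mc D(g)$ attached to a morphism $f \to g$ of $\TwArr(\mc M)$ carries the final object to a final object. This is false, and if it were true it would trivialize the lemma: the induced morphism $\mf Z(f) \to \mf Z(g)$ would then be an equivalence for \emph{every} morphism of $\TwArr(\mc M)$, forcing all centralizers (in particular all centers $\mf Z(M) = \mf Z(\id_M)$) in a connected $\mc M$ to be equivalent. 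Concretely, take $\mc C = \mc M$ to be spectra, so $\mf Z(f\co M \to N)$ is the function spectrum $F(M,N)$ pointed by $f$. The morphism $\id_{\SS} \to (f\co \SS \to N)$ in $\TwArr(\mc M)$ sends the final object $(\SS, \id) \in \mc D(\id_\SS)$ to $(\SS, f) \in \mc D(f)$, which is final only when $f$ exhibits $\SS \simeq F(\SS,N) = N$. So the verification you defer to ``unwinding the universal property'' cannot be carried out.

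Fortunately, no such preservation is needed, and this is exactly what the result the paper cites (\cite[Proposition~2.4.4.9]{LurieHTT}) provides. The fibration $\mc D \to \TwArr(\mc M)$ is \emph{coCartesian}: a morphism $\alpha\co f \to g$ pushes an action $\lambda_A\co A \otimes M \to N$ forward by pre- and post-composition. For a coCartesian fibration all of whose fibers admit final objects, the full subcategory spanned by the fiberwise-final objects is a trivial fibration over the base \emph{with no preservation hypothesis}: the edge $\mf Z(f) \to \mf Z(g)$ over $\alpha$ that one must produce lives in $\Map_{\mc D(g)}(\alpha_!\,\mf Z(f), \mf Z(g))$, which is contractible because the \emph{target} is final---$\alpha_!\,\mf Z(f)$ need not be. Your stated general principle, with ``restriction-of-scalars'' functors $\mc E_{b'} \to \mc E_b$, is the Cartesian-fibration-with-final-objects case, where one genuinely would need preservation; that variance mismatch is what led you to introduce (and then incorrectly assert) the preservation condition. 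Replacing your second step with the coCartesian form of the trivial-fibration statement---which you in fact mention in passing---repairs the argument and recovers the paper's proof.
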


\begin{proof}
  The diagram of $\infty$-categories
  \[
    \mc C_{\unit /} \to \mc M_{\unit \otimes M / } \from \mc M_{\unit \otimes M / / N}
  \]
  is functorial in $f\co \unit \otimes M \to N \in \TwArr(\mc M)$, and therefore so is the fiber product category. Since centralizers exist, each of these categories has a final object $\mf Z(f)$. By \cite[Proposition 2.4.4.9]{LurieHTT} this can be made into a functorial assignment of final object, which we can compose with the projection to $\mc C_{\unit/}$ to get a functor $\mf Z\co \TwArr(\mc M) \to \mc C_{\unit/}$.
\end{proof}

\begin{reminder}
  If $\mc M$ is left-tensored over $\mc C$, there is a category $\LMod(\mc M)$ \cite[Definition~4.2.1.13]{LurieHA} that can be identified with an $\infty$-category of pairs $(A,M)$ of an algebra $A$ in $\mc C$ and a left $A$-module $M$ in $\mc M$. This decomposition corresponds to a pair of forgetful functors $U\co \LMod(\mc M) \to \mc M$ and $V: \LMod(\mc M) \to \Alg_{\E_1}(\mc C)$. Both of these are categorical fibrations \cite[\S 2.2.5]{LurieHTT}; this allows us to compute homotopical pullbacks as ordinary pullbacks.
\end{reminder}

\begin{definition}[{\cite[Definition 5.3.1.6]{LurieHA}}]\label{def:center}
  Suppose that $\mc C$ is monoidal and that $\mc M$ is left-tensored over $\mc C$. A \emph{center} of an object $M$ of $\mc M$ is a final object in the $\infty$-category
  \[
    \LMod(\mc M) \times_{\mc M} \{M\}.
  \]
  More explicitly, it is a final object $\mf Z(M)$ in $\mc C$ in a category of $\E_1$-algebras $A$ with a chosen $A$-module structure on $M$.
\end{definition}

\begin{remark}
  Unlike the centralizers in Lemma~\ref{lem:centralizerfunctor}, centers do not enjoy a very general notion of functoriality.
\end{remark}

Centers and centralizers are closely connected: a center $\mf Z(M)$ of an object $M$ is equivalent to a centralizer $\mf Z(\id_M)$ of the identity morphism of $M$ by \cite[Proposition~5.3.1.8]{LurieHA}. Our first goal will be to expand on these definitions in terms of mapping spaces.

\begin{proposition}\label{prop:mappingcentralizer}
  Suppose that we have an $\E_0$-algebra $Z$ in $\mc C$ and a diagram
  \[
    \begin{tikzcd}
      & Z \otimes M \ar[dr] \\
      \unit \otimes M \ar[ur, "\eta \otimes \id"] \ar[rr,"f",swap] && N
    \end{tikzcd}
  \]
  in $\mc M$. Then this diagram makes $Z$ into a centralizer $\mf Z(f)$ if and only if, for any $\E_0$-algebra $A$ in $\mc C$, the diagram
  \[
    \begin{tikzcd}
      \Map_{\E_0}(A,Z) \ar[r] \ar[d] &
      \Map_{\mc M}(A \otimes M, N) \ar[d] \\
      \{f\} \ar[r] &
      \Map_{\mc M}(\unit \otimes M, N)
    \end{tikzcd}
  \]
  is a homotopy pullback diagram.
\end{proposition}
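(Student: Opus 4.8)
The plan is to unwind the definition of centralizer (Definition~\ref{def:centralizer}) into a statement about final objects, and then translate ``final object'' into the universal property governing mapping spaces. Recall that $\mf Z(f)$ is by definition a final object in the fiber product $\infty$-category
\[
  \mc E := (\mc C_{\unit/}) \times_{(\mc M_{\unit \otimes M/})} (\mc M_{\unit \otimes M // N}).
\]
An object of $\mc E$ is precisely the data of an $\E_0$-algebra $Z$ together with a coherently commuting triangle as displayed, and the given diagram is the datum of a chosen object $\zeta \in \mc E$ lying over $Z$. So the content to prove is: $\zeta$ is a final object of $\mc E$ if and only if, for every $\E_0$-algebra $A$, the displayed square of mapping spaces is a homotopy pullback.

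First I would compute the mapping space $\Map_{\mc E}((A, \text{triangle}_A), \zeta)$ for an arbitrary object of $\mc E$. Because $U\co \LMod(\mc M) \to \mc M$ and the forgetful functors from overcategories/undercategories are categorical fibrations (as recorded in the reminders preceding this statement), homotopy pullbacks of $\infty$-categories here are computed as strict pullbacks, and mapping spaces in a pullback $\infty$-category are the corresponding homotopy pullbacks of mapping spaces. Thus
\[
  \Map_{\mc E}(-, \zeta) \simeq \Map_{\mc C_{\unit/}}(-, Z) \timesover{\Map_{\mc M_{\unit \otimes M/}}(-, (M\to N))} \Map_{\mc M_{\unit \otimes M // N}}(-, (M \to N)).
\]
Next I would identify each of the three mapping spaces appearing on the right using the standard fibration sequences for mapping spaces in under- and over-categories (\cite[\S 1.2.9, \S 4.2.1]{LurieHA}): $\Map_{\mc C_{\unit/}}(A, Z) = \Map_{\E_0}(A, Z)$ by definition of $\Alg_{\E_0}(\mc C) = \mc C_{\unit/}$; the mapping space out of $A\otimes M$ in $\mc M_{\unit\otimes M/}$ is the homotopy fiber of $\Map_{\mc M}(A\otimes M, N) \to \Map_{\mc M}(\unit \otimes M, N)$ over the structure map; and the mapping space in $\mc M_{\unit\otimes M//N}$ adds no further data beyond what is already pinned down, i.e.\ it is contractible once the source, target, and the map to $N$ are fixed. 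Assembling these, $\Map_{\mc E}(A\text{-datum}, \zeta)$ becomes an iterated homotopy pullback which rearranges to the total homotopy fiber of the square in the statement; this square has a terminal vertex $\{f\}$, so its total homotopy fiber is contractible (for all choices of $A$ and all choices of the incoming triangle) if and only if the square is a homotopy pullback. Finally, $\zeta$ is a final object of $\mc E$ exactly when $\Map_{\mc E}(-, \zeta)$ is contractible on every object, which by the previous sentence is equivalent to the pullback condition holding for every $\E_0$-algebra $A$ (the quantification over the incoming triangle is automatically absorbed, since contractibility of the square's total fiber does not depend on that auxiliary choice).

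The main obstacle I expect is bookkeeping the coherence: making precise that mapping spaces in the triple fiber product $\mc E$ really do assemble into the total homotopy fiber of the displayed $2\times 2$ square, rather than something with extra unwanted path-space factors, and checking that the over-category factor $\mc M_{\unit\otimes M//N}$ contributes only a contractible space once the rest of the data is fixed. This is the kind of manipulation that is routine with the fibration-sequence description of mapping spaces in slice categories but is easy to get subtly wrong; being careful that the relevant forgetful functors are categorical fibrations (so that the pullbacks are homotopy-invariant) is what makes it go through.
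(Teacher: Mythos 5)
Your proposal follows essentially the same route as the paper: unwind the centralizer as a final object of the triple fiber product $\mc C_{\unit/} \times_{\mc M_{\unit\otimes M/}} \mc M_{\unit\otimes M//N}$, compute mapping spaces there as homotopy pullbacks of mapping spaces in the slice categories, and observe that contractibility of the mapping space into $\zeta$ for every incoming object is equivalent to the displayed square being a homotopy pullback (since every point of the cospan's pullback arises from some triangle $\lambda_A$). One bookkeeping slip to fix in the writeup: the middle factor is $\Map_{\mc M_{\unit\otimes M/}}(A\otimes M, Z\otimes M)$, i.e.\ the fiber of $\Map_{\mc M}(A\otimes M, Z\otimes M)\to\Map_{\mc M}(\unit\otimes M, Z\otimes M)$ rather than of maps into $N$, and the $\mc M_{\unit\otimes M//N}$ factor is not contractible but is the fiber recording compatibility with the maps to $N$; combining these two correctly is what produces the fiber of $\Map_{\mc M}(A\otimes M,N)\to\Map_{\mc M}(\unit\otimes M,N)$ over $f$, exactly as in the paper's proof.
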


\begin{proof}
  As stated, a centralizer of $f$ is a final object in the $\infty$-category
  \[
    \mc C_{\unit/} \times_{\mc M_{\unit \otimes M/}} \mc M_{\unit \otimes M / / N}.
  \]
  An object $\lambda_A$ in this consists of a map $\eta_A\co \unit \to A$, an action map $\lambda_A\co A \otimes M \to N$, and a homotopy making the diagram
  \[
    \begin{tikzcd}
      &A \otimes M \ar[dr,"\lambda_A"] \\
      \unit \otimes M \ar[ur,"\eta_A \otimes 1"] \ar[rr,"f",swap] && N
    \end{tikzcd}
  \]
  coherently commutative. Given two objects $(\eta_A, \lambda_A)$ and $(\eta_B, \lambda_B)$ in this category, we would like to compute the mapping space between them in this fiber product $\infty$-category. The mapping space between their images in $\mc C_{\unit/}$ is $\Map_{\E_0}(A,B)$, the mapping space between their images in $\mc M_{\unit \otimes M/}$ is the homotopy fiber over $\eta_B \otimes 1$ of 
  \[
    \Map_{\mc M}(A \otimes M, B \otimes M) \xrightarrow{(-)\circ \eta_A \otimes 1} \Map_{\mc M}(\unit \otimes M, B \otimes M),
  \]
  and the mapping space between their images in $\mc M_{\unit \otimes M / / N}$ is the homotopy fiber of
  \[
    \Map_{\mc M}(A \otimes M, B \otimes M) \to \Map_{\mc M}(A \otimes M, N) \times_{\Map_{\mc M}(\unit \otimes M, N)} \Map_{\mc M}(\unit \otimes M, B \otimes M)
  \]
  over $(\lambda_A, \eta_B \otimes 1)$. This allows us to describe the mapping spaces in this fiber product as computed by a homotopy pullback diagram:
  \[
    \begin{tikzcd}
      \Map((\eta_A, \lambda_A), (\eta_B, \lambda_B)) \ar[r] \ar[d] &
      \Map_{\E_0}(A,B) \ar[d] \\
      \{\lambda_A\} \ar[r] &
      \Map_{\mc M}(A \otimes M, N) \times_{\Map_{\mc M}(\unit \otimes M, N)} \{f\}
    \end{tikzcd}
  \]
  In order for $(\eta_Z, \lambda_Z)$ to be a final object in this category, the pullback space $\Map((\eta_A,\lambda_A), (\eta_Z,\lambda_Z))$ must always be contractible. This is equivalent to knowing that the map
  \[
    \Map_{\E_0}(A,Z) \to \Map_{\mc M}(A \otimes M, N) \times_{\Map_{\mc M}(\unit \otimes M, N)} \{f\}
  \]
  is a homotopy equivalence over the path component of any $\lambda_A$. However, all path components appear as some $\lambda_A$: a point of $\Map_{\mc M}(A \otimes M, N) \times_{\Map_{\mc M}(\unit \otimes M, N)} \{f\}$ is equivalent to a choice of $\lambda_A$ making $(\eta_A, \lambda_A)$ into an object of this category. Therefore, $Z$ is a centralizer if and only if the diagram
  \[
    \begin{tikzcd}
      \Map_{\E_0}(A,Z) \ar[r] \ar[d] &
      \Map_{\mc M}(A \otimes M, N) \ar[d] \\
      \{f\} \ar[r] &
      \Map_{\mc M}(\unit \otimes M, N)
    \end{tikzcd}
  \]
  is a homotopy pullback for all $\E_0$-algebras $A$.
\end{proof}

This characterization of centralizers in terms of mapping spaces allows us to determine the structure of centralizers in diagram categories. Recall that if $\mc M$ is left-tensored over $\mc C$, then $\Fun(K,\mc M)$ is left-tensored over $\Fun(K,\mc C)$ using the pointwise tensor product \cite[Remark 2.1.3.4]{LurieHA}. The diagonal functor $\mc C \to \Fun(K,\mc C)$ is compatible with this pointwise tensor, making $\Fun(K, \mc M)$ left-tensored over $\mc C$ as well, with an object-by-object definition:
\[
  (A \otimes F)(X) = A \otimes (F(X))
\]

\begin{theorem}\label{thm:diagramcentralizer}
  Suppose that $\mc M$ is left-tensored over $\mc C$ and that $\mc M$ has centralizers. For a natural transformation $\theta\co K \times \Delta^1 \to \mc M$ between diagrams $F, G\co K \to \mc M$, the homotopy limit
  \[
    \holim_{\gamma \in \TwArr(K)} \mf Z(\theta(\gamma, u))
  \]
  (if it exists) is a centralizer $\mf Z(\theta)$ in $\Fun(K,{\mc M})$. Here $u$ is the nonidentity morphism in $\Delta^1$. 
\end{theorem}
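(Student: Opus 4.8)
The plan is to establish the universal property of the centralizer via the mapping-space criterion of Proposition~\ref{prop:mappingcentralizer}, reduced to a pointwise check over $\TwArr(K)$.

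First I would make the indexing precise. Currying $\theta$ and using the identification $\TwArr(K\times\Delta^1)\simeq\TwArr(K)\times\TwArr(\Delta^1)$ together with the inclusion of the terminal object $u\in\TwArr(\Delta^1)$ (recall $\TwArr(\Delta^1)$ is the poset $\mathrm{id}_0\to u\from\mathrm{id}_1$) yields a functor $\TwArr(K)\to\TwArr(\mc M)$, $\gamma\mapsto\theta(\gamma,u)$, where $\theta(\gamma,u)\co F(i)\to G(j)$ for $\gamma\co i\to j$. Composing with the centralizer functor of Lemma~\ref{lem:centralizerfunctor} gives a functor $\TwArr(K)\to\Alg_{\E_0}(\mc C)$ whose homotopy limit is the object $Z$ we must study. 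Moreover, the functoriality behind Lemma~\ref{lem:centralizerfunctor} carries the full centralizer datum — a unit $\unit\to\mf Z(\theta(\gamma,u))$ and a coherent triangle $\mf Z(\theta(\gamma,u))\otimes F(i)\to G(j)$ over $\theta(\gamma,u)$ — naturally in $\gamma$. Passing to homotopy limits over $\TwArr(K)$ and invoking Proposition~\ref{prop:diagrammaps} to recognize $\holim_{\gamma\co i\to j}\Map_{\mc M}(A\otimes F(i),G(j))$ as $\Map_{\Fun(K,\mc M)}(A\otimes F,G)$ (and similarly for $\unit\otimes F$), this assembles to a unit $\unit\to Z$ and a coherent triangle $Z\otimes F\to G$ over $\theta$ for the diagonal $\mc C$-action on $\Fun(K,\mc M)$: a candidate centralizer datum for $\theta$.

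Next I would feed the candidate $Z$ (with its datum) to Proposition~\ref{prop:mappingcentralizer}, applied to the left-tensored pair $(\mc C,\Fun(K,\mc M))$: it is the centralizer $\mf Z(\theta)$ exactly when, for every $\E_0$-algebra $A$ in $\mc C$, the commutative square with corners $\Map_{\Alg_{\E_0}(\mc C)}(A,Z)$, $\Map_{\Fun(K,\mc M)}(A\otimes F,G)$, $\{\theta\}$, and $\Map_{\Fun(K,\mc M)}(\unit\otimes F,G)$ is a homotopy pullback. Using that $\Map(A,-)$ preserves homotopy limits, Proposition~\ref{prop:diagrammaps} on the two function spaces in $\Fun(K,\mc M)$, and the fact — a short computation with the twisted-arrow factorization $(\gamma,u)=(\mathrm{id}_j,u)\circ(\gamma,\mathrm{id}_0)$ — that $\theta$ corresponds under Proposition~\ref{prop:diagrammaps} to the compatible family $(\theta(\gamma,u))_\gamma$, all four corners become homotopy limits over $\gamma\in\TwArr(K)$, and by the naturality arranged in the previous paragraph the four comparison maps are the $\TwArr(K)$-homotopy limits of the corresponding maps for the individual morphisms $\theta(\gamma,u)$. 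Since a limit of homotopy-pullback squares of spaces is again a homotopy pullback, it then suffices to check that for each $\gamma\co i\to j$ the square with corners $\Map_{\Alg_{\E_0}(\mc C)}(A,\mf Z(\theta(\gamma,u)))$, $\Map_{\mc M}(A\otimes F(i),G(j))$, $\{\theta(\gamma,u)\}$, $\Map_{\mc M}(\unit\otimes F(i),G(j))$ is a homotopy pullback; that is precisely Proposition~\ref{prop:mappingcentralizer} for the morphism $\theta(\gamma,u)$ in $\mc M$, which has a centralizer by hypothesis.

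I expect the main obstacle to be the last two sentences of the middle paragraph: assembling the levelwise centralizer data into an honest, coherent centralizer datum for $\theta$ in $\Fun(K,\mc M)$, and checking that the four comparison maps in the final square really are the pointwise homotopy limits of their analogues for each $\theta(\gamma,u)$. Both amount to tracking naturality in $\gamma\in\TwArr(K)$ through Lemma~\ref{lem:centralizerfunctor} and Proposition~\ref{prop:diagrammaps} — routine in spirit, but requiring care with the slice-category and twisted-arrow bookkeeping. Everything else is formal: mapping out preserves limits, finite limits commute with arbitrary limits, and limits of homotopy-pullback squares are homotopy pullbacks.
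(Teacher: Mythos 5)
Your proposal is correct and follows essentially the same route as the paper's proof: both apply the mapping-space criterion of Proposition~\ref{prop:mappingcentralizer} pointwise over $\TwArr(K)$, identify the corners of the resulting squares after taking homotopy limits with the functor-category mapping spaces, and conclude by invoking Proposition~\ref{prop:mappingcentralizer} again together with the fact that a homotopy limit of homotopy-pullback squares is a homotopy pullback. The only difference is presentational — you build the candidate object and its centralizer datum first and then verify the universal property, while the paper assembles the pointwise pullback squares directly — and your extra care with the $\TwArr(K\times\Delta^1)$ indexing and the functoriality from Lemma~\ref{lem:centralizerfunctor} makes explicit what the paper leaves implicit.
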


\begin{proof}
  For any fixed $\E_0$-algebra $A$ and $\gamma\co M \to N$ with image $\theta(\gamma,u)\co F(M) \to G(N)$, the diagram
  \[
    \begin{tikzcd}
      \Map_{\E_0}(A,\mf Z(\theta(\gamma))) \ar[r] \ar[d] &
      \Map_{\mc M}(A \otimes F(M), G(N)) \ar[d] \\
      \{\theta(\gamma,u)\} \ar[r] &
      \Map_{\mc M}(F(M),G(N))
    \end{tikzcd}
  \]
  is a homotopy pullback diagram in $\mc C$ by Proposition~\ref{prop:mappingcentralizer} and is functorial in the twisted arrow category $\TwArr(K)$. Taking homotopy limits over $\TwArr(K)$, Lemma~\ref{lem:sectionmaps} shows that we get a homotopy pullback diagram
  \[
    \begin{tikzcd}
      \holim_{\gamma \in \TwArr(K)}\Map_{\E_0}(A,\mf Z(\theta(\gamma,u))) \ar[r] \ar[d] &
      \Map_{\Fun(K,\mc M)}(A \otimes F, G) \ar[d] \\
      \{\theta\} \ar[r] &
      \Map_{\Fun(K,\mc M)}(F,G)
    \end{tikzcd}
  \]
  that is natural in $A$. If the homotopy limit $\holim_{\gamma \in \TwArr(K)} \mf Z(\theta(\gamma,u))$ exists in $\mc C$, the above homotopy pullback diagram and Proposition~\ref{prop:mappingcentralizer} then show that it is a centralizer of $\theta$ in the functor category.
\end{proof}

\begin{corollary}
  For a map $g\co M \to N$ in $\mc M$, the center $\mf Z^{\Ar(\mc M)}(g) \simeq \mf Z^{\Ar(\mc M)}(\id_g)$, where $\id_g$ is viewed as a map in the arrow category $\Ar(\mc M)$, is part of a homotopy pullback diagram
  \[
    \begin{tikzcd}
      \mf Z^{\Ar(\mc M)}(\id_g) \ar[r] \ar[d] & 
      \mf Z(M) \ar[d] \\
      \mf Z(N) \ar[r] &
      \mf Z(g).
    \end{tikzcd}
  \]
\end{corollary}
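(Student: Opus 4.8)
The plan is to deduce this directly from Theorem~\ref{thm:diagramcentralizer} applied with $K = \Delta^1$. First I would recall that $\Ar(\mc M) = \Fun(\Delta^1,\mc M)$ is left-tensored over $\mc C$ through the diagonal $\mc C \to \Fun(\Delta^1,\mc C)$ together with the pointwise tensor (exactly the setting of that theorem), and that a center of an object is by \cite[Proposition~5.3.1.8]{LurieHA} the centralizer of its identity. This gives the stated equivalence $\mf Z^{\Ar(\mc M)}(g) \simeq \mf Z^{\Ar(\mc M)}(\id_g)$ and reduces the problem to computing the centralizer of the identity natural transformation $\theta\co \Delta^1 \times \Delta^1 \to \mc M$ of the diagram $g\co \Delta^1 \to \mc M$, where $g(0) = M$ and $g(1) = N$.

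Next I would invoke Theorem~\ref{thm:diagramcentralizer}: the centralizer $\mf Z(\theta)$, when the relevant homotopy limit exists, is computed as $\holim_{\gamma \in \TwArr(\Delta^1)} \mf Z(\theta(\gamma,u))$, where $u$ is the nonidentity morphism of $\Delta^1$. As recorded in the proof of Corollary~\ref{cor:arrowmaps}, the twisted arrow category $\TwArr(\Delta^1)$ is the cospan poset with objects $(0\to 0)$, $(1\to 1)$, $(0\to 1)$ and two nonidentity morphisms $(0\to 0) \to (0\to 1) \from (1\to 1)$, so this homotopy limit is a homotopy pullback of three $\E_0$-algebras in $\mc C$.

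Finally I would identify the three vertices and the two legs. Since $\theta$ is the identity transformation, $\theta(\id_0,u) = \id_M$, $\theta(\id_1,u) = \id_N$, and $\theta(g,u) = g$ (up to composition with identities), so by definition of the center their centralizers are $\mf Z(M)$, $\mf Z(N)$, and $\mf Z(g)$ respectively. Under the functoriality of Lemma~\ref{lem:centralizerfunctor}, the twisted-arrow morphism $\id_M \to g$ (built from $\id_M$ on the source and $g$ on the target) is carried to the comparison map $\mf Z(M) \to \mf Z(g)$, and symmetrically $\id_N \to g$ is carried to $\mf Z(N) \to \mf Z(g)$. Thus the homotopy limit above is exactly the square
\[
\begin{tikzcd}
\mf Z^{\Ar(\mc M)}(\id_g) \ar[r] \ar[d] & \mf Z(M) \ar[d] \\
\mf Z(N) \ar[r] & \mf Z(g),
\end{tikzcd}
\]
which exhibits it as a homotopy pullback; in particular the relevant homotopy limit exists whenever $\mc C$ has pullbacks, since $\mf Z(M)$, $\mf Z(N)$, $\mf Z(g)$ exist in $\mc C$ by hypothesis.

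The only real care needed is the bookkeeping: checking that $\theta(\gamma,u)$ is the claimed morphism at each of the three objects of $\TwArr(\Delta^1)$, and that $\mf Z$ applied to the two nonidentity twisted-arrow morphisms produces the expected maps into $\mf Z(g)$ and not some transposition of them. I do not anticipate any genuine obstacle beyond unwinding the definitions of the twisted arrow category and of the functor $\mf Z$.
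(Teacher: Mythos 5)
Your argument is correct and is exactly the intended one: the paper states this corollary without proof immediately after Theorem~\ref{thm:diagramcentralizer}, as the evident application with $K=\Delta^1$, using the cospan description of $\TwArr(\Delta^1)$ from Corollary~\ref{cor:arrowmaps} and the identification of centers with centralizers of identities. Your bookkeeping of the three vertices and the two comparison maps is right, so there is nothing to add.
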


The next result makes explicit that the center serves as an endomorphism object among \emph{algebras} acting on $M$: not only is $\mf Z(M)$ final among algebras acting on $M$, but a map of algebras $A \to B$ over $\mf Z(M)$ is equivalent to a map of algebras compatible with action on $M$.

\begin{proposition}\label{prop:algebrasovercenter}
  For an object $M \in \mc M$ with center $\mf Z(M)$ in $\mc C$, there is an equivalence
  \[
    \Alg_{\E_1}(\mc C)_{/\mf Z(M)} \simeq 
    \LMod(\mc M) \times_{\mc M} \{M\}
  \]
  between the category of $\E_1$-algebras $A$ over $\mf Z(M)$ and the category of $\E_1$-algebras $A$ together with an $A$-module structure on $M$.
\end{proposition}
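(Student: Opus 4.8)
The plan is to exhibit both sides as final-object-type constructions living over a common base, and then to apply the universal property of $\mf Z(M)$ fiberwise. Write $\mc P = \LMod(\mc M) \times_{\mc M} \{M\}$ for the $\infty$-category of pairs $(A, \text{action of } A \text{ on } M)$; by Definition \ref{def:center}, $\mf Z(M)$ is a final object of $\mc P$, and it is classified by an object that I will also call $\mf Z(M)$ in $\Alg_{\E_1}(\mc C)$ together with a distinguished action $\mf Z(M) \otimes M \to M$. There is a forgetful functor $V'\co \mc P \to \Alg_{\E_1}(\mc C)$ remembering only the algebra (this is the restriction of the functor $V$ from the reminder on $\LMod(\mc M)$), and it is a categorical fibration since $V$ is. First I would observe that $V'$ sends the final object $\mf Z(M)$ to the object $\mf Z(M) \in \Alg_{\E_1}(\mc C)$, so $V'$ factors through a functor
\[
  \mc P \to \Alg_{\E_1}(\mc C)_{/\mf Z(M)}
\]
covering $V'$; concretely, an object $(A, \lambda_A)$ of $\mc P$ maps to $A$ equipped with the canonical map $A \to \mf Z(M)$ given by mapping into the final object of $\mc P$ and applying $V'$. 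This is the functor I claim is an equivalence.

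The key step is to identify mapping spaces on both sides. On the target, for $A \to \mf Z(M)$ and $B \to \mf Z(M)$ the mapping space $\Map_{\Alg_{\E_1}(\mc C)_{/\mf Z(M)}}(A, B)$ is the homotopy fiber of $\Map_{\Alg_{\E_1}(\mc C)}(A, B) \to \Map_{\Alg_{\E_1}(\mc C)}(A, \mf Z(M))$ over the structure map. On the source, for objects $(A, \lambda_A)$ and $(B, \lambda_B)$ of $\mc P$, the mapping space is the homotopy fiber product of $\Map_{\Alg_{\E_1}(\mc C)}(A, B)$ over the space of ways of filling in the compatibility with the two module structures on $M$ — this unwinds exactly as in the proof of Proposition \ref{prop:mappingcentralizer}, using that $\mc P$ is itself a (homotopy) pullback $\infty$-category and that mapping spaces in a pullback are pullbacks of mapping spaces. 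Now the finality of $\mf Z(M)$ in $\mc P$ says precisely that for the specific target $(\mf Z(M), \lambda_{\mf Z(M)})$ the relevant mapping space is contractible for every source; more is true, and the standard argument (cf.\ the discussion following Proposition \ref{prop:mappingcentralizer}, together with \cite[Proposition~5.3.1.8]{LurieHA}) shows that comparison of these two fiber sequences is an equivalence.

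To make the argument uniform rather than compute mapping spaces by hand, my preferred route is actually to invoke Lemma \ref{lem:moduliequiv}: it suffices to check that the functor $\mc P \to \Alg_{\E_1}(\mc C)_{/\mf Z(M)}$ induces weak equivalences on maximal subgroupoids and on maximal subgroupoids of arrow categories. For the underlying groupoids, an equivalence $(\mc P)^\simeq \to (\Alg_{\E_1}(\mc C)_{/\mf Z(M)})^\simeq$ amounts to saying that the data of an $A$-action on $M$ is, up to coherent equivalence, the same as a map $A \to \mf Z(M)$; this is exactly the content of $\mf Z(M)$ being a center, applied to the slice. The arrow-category statement is the same assertion applied to $\Ar(\mc C)$ in place of $\mc C$ and $\Ar(\mc M)$ in place of $\mc M$: by the Corollary to Theorem \ref{thm:diagramcentralizer}, the center in $\Ar(\mc M)$ of $\id_M$ (equivalently of $M$ viewed in $\Ar(\mc M)$ appropriately) is computed by the pullback of $\mf Z(M) \leftarrow \mf Z(M) \to \mf Z(M)$ type diagram, which is again $\mf Z(M)$, so that the same universal property propagates to arrows. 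I expect the main obstacle to be bookkeeping: carefully matching up the homotopy-coherent "compatibility with the action on $M$" data appearing in $\mc P$ with the slice-category structure maps, and confirming that the functor I wrote down really is the one induced by finality — i.e.\ that no sign or direction has been transposed — rather than any genuine homotopical subtlety. Once the mapping-space identification (or the two moduli equivalences) is in place, the conclusion is immediate.
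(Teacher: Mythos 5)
Your overall strategy --- exhibit $\mf Z(M)$ as a final object of $\mc P = \LMod(\mc M)\times_{\mc M}\{M\}$ and compare mapping spaces in $\mc P$ with mapping spaces in the slice --- is the same as the paper's, and your construction of the comparison functor and the reduction of the slice mapping space to a fiber of $\Map_{\Alg_{\E_1}(\mc C)}(A,B)\to\Map_{\Alg_{\E_1}(\mc C)}(A,\mf Z(M))$ are both fine. But the one nontrivial input is missing: the paper's proof turns on the fact that the forgetful functor $\mc P\to\Alg_{\E_1}(\mc C)$ is a \emph{right fibration} (\cite[Corollary~4.7.1.42]{LurieHA}), which via \cite[Proposition~2.4.4.3]{LurieHTT} gives, for every $B\to C$ in $\mc P$, a homotopy pullback square comparing $\Map_{\mc P}(A,B)\to\Map_{\mc P}(A,C)$ with $\Map_{\Alg_{\E_1}}(A,B)\to\Map_{\Alg_{\E_1}}(A,C)$; setting $C=\mf Z(M)$ and using finality then identifies $\Map_{\mc P}(A,B)$ with the fiber, i.e.\ with the slice mapping space. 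You note only that the functor is a categorical fibration, which is strictly weaker and does not yield this square. Neither of your proposed substitutes closes the gap. First, the mapping spaces of $\mc P$ do not ``unwind exactly as in the proof of Proposition~\ref{prop:mappingcentralizer}'': the centralizer category there is a finite pullback of over/under-categories, whereas an object of $\LMod(\mc M)$ carries a full $A_\infty$-tower of action coherences, so there is no single finite pullback computing $\Map_{\mc P}$ to compare against. Second, the route through Lemma~\ref{lem:moduliequiv} is circular as written: the claim that $(\mc P)^\simeq\simeq(\Alg_{\E_1}(\mc C)_{/\mf Z(M)})^\simeq$ ``is exactly the content of $\mf Z(M)$ being a center'' is not so --- being a center only says mapping spaces in $\mc P$ \emph{into} $\mf Z(M)$ are contractible, and upgrading that to ``the space of $A$-actions on $M$ is $\Map_{\Alg_{\E_1}}(A,\mf Z(M))$'' is precisely the (groupoid-level shadow of the) proposition you are trying to prove; the upgrade is exactly what the right-fibration property supplies, since a right fibration with a final object is represented by that object. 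Inserting the citation to \cite[Corollary~4.7.1.42]{LurieHA} and running the pullback-square argument repairs the proof and brings it in line with the paper's.
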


\begin{proof}
  As in Definition~\ref{def:center}, the center $\mf Z(M)$ is a final object in the category $\LMod_{\mc M} \times_{\mc M} \{M\}$.

  The forgetful map $\LMod(\mc M) \times_{\mc M} \{M\} \to \Alg_{\E_1}(\mc C)$ is a right fibration by \cite[Corollary~4.7.1.42]{LurieHA}, which means that for any  map $B \to C$ of algebras acting on $M$ there is a natural homotopy pullback diagram
  \[
    \begin{tikzcd}
      \Map_{\LMod(\mc M) \times_{\mc M} \{M\}}(A, B) \ar[r] \ar[d] &
      \Map_{\LMod(\mc M) \times_{\mc M} \{M\}}(A, C) \ar[d] \\
      \Map_{\Alg_{\E_1}(\mc C)} (A,B) \ar[r] &
      \Map_{\Alg_{\E_1}(\mc C)} (A,C)
    \end{tikzcd}
  \]
  by \cite[Proposition~2.4.4.3]{LurieHTT}. Taking $C = \mf Z(M)$, the upper-right space is contractible. Taking homotopy fibers of the horizontal maps, we find that the forgetful functor induces an equivalence
  \[
    \Map_{\LMod(\mc M) \times_{\mc M} \{M\}}(A, B) \to \Map_{\Alg_{\E_1}(\mc C)_{/\mf Z(M)}} (A,B)
  \]
  as desired.
\end{proof}
  
\section{Actions}\label{sec:actions}

In this section, we will assume that $\mc C$ is a monoidal $\infty$-category and that $\mc M$ is left-tensored over $\mc C$. To be more precise about this, we will begin with some background about how coherent left-tensorings are handled.

\begin{reminder}
  There is an ordinary category $LM^\otimes$ which is the universal example of a symmetric monoidal category with an algebra $\mf a$ and a left $\mf a$-module $\mf m$ \cite[Notation 4.2.1.6]{LurieHA}. The objects of $LM^\otimes$ are formal tuples of these objects, and the maps are generated by a unit $() \to (\mf a)$, an associative unital product $(\mf a, \mf a) \to (\mf a)$, and an associative unital left action $(\mf a, \mf m) \to (\mf m)$. The $\infty$-category $\mc{LM}^\otimes$ is the nerve of $LM^\otimes$.

  A monoidal category $\mc C$ with a category $\mc M$ left-tensored over it is encoded by a symmetric monoidal functor $\Psi$ from $\mc{LM}^\otimes$ to $\infty$-categories; this sends $\mf a$ to a monoidal $\infty$-category $\Psi(\mf a) = \mc C$ and sends $\mf m$ to an $\infty$-category $\Psi(\mf m) = \mc M$ with a left action of $\mc C$. This functor is equivalently encoded by a coCartesian fibration $\mc C^\otimes \to \mc{LM}^\otimes$, whose fiber over $X$ is $\Psi(X)$.
\end{reminder}

\begin{definition}\label{def:triple}
  Let $\sigma\co \Delta^2 \to \mc{LM}^\otimes$ represent the sequence of maps
  \[
    (\mf m) \to (\mf a, \mf m) \to (\mf m).
  \]
  Here the first map is induced by the unit of the algebra $\mf a$, and the second map represents the left action of $\mf a$ on $\mf m$.
\end{definition}

\begin{reminder}
  We can now form the fiber product $\mc B = \Delta^2 \times_{\mc{LM}^\otimes} \mc C^\otimes$. The functor $\mc B \to \Delta^2$ represents the restriction to a $\Delta^2$-shaped diagram of $\infty$-categories and functors
  \[
  \begin{tikzcd}
    &\mc C \times \mc M \ar[dr,"\otimes"]\\
    \mc M \ar[ur,"{(\unit,-)}"] \ar[rr,"\id"]&& 
    \mc M,
  \end{tikzcd}
  \]
  commuting up to natural isomorphism. We will now discuss categories of \emph{sections}: maps $\Delta^2 \to \mc C^\otimes$ over $\mc{LM}^\otimes$, or equivalently $\Delta^2 \to \mc B$ over $\Delta^2$. Sections make it possible to discuss a category whose objects are the following data:
  \begin{enumerate}
    \item objects $M_0, M_1, M_2$ in $\mc M$ and $Y \in \mc C$,
    \item maps $(\unit,M_0) \to (Y,M_1)$, $Y \otimes M_1 \to M_2$, and $M_0 \to M_2$,
    \item a coherence expressing commutativity of the resulting diagram
    \[
    \begin{tikzcd}
      \unit \otimes M_0 \ar[r] \ar[d,"\simeq"] &
      Y \otimes M_1 \ar[d] \\
      M_0 \ar[r] & M_2.
    \end{tikzcd}
    \]
  \end{enumerate}
\end{reminder}

\begin{proposition}\label{prop:sectionmapping}
  Given two sections
  \begin{align*}
    s_M &= (M_0 \to (X,M_1) \to M_2)\text{ and}\\
    s_N &= (N_0 \to (Y, N_1) \to N_2)
  \end{align*}
  in the functor category ${\Fun_{\mc{LM}^\otimes}}(\Delta^2, \mc C^\otimes)$, the mapping space between them is a homotopy pullback
  \[
    \begin{tikzcd}
      \Map_{\Fun_{\mc{LM}^\otimes}(\Delta^2, \mc C^\otimes)}(s_M, s_N) \ar[r] \ar[d]
      & \Map_{\mc M}(M_2, N_2) \ar[d] \\
      \Map_{\E_0}(X,Y) \times \Map_{\Ar(\mc M)}(M_0 \to M_1, N_0 \to N_1) \ar[r] &
      \Map_{\mc M}(X \otimes M_1, N_2).
    \end{tikzcd}
  \]

  Given two sections
  \begin{align*}
    t_M &= ((X,M_1) \from M_0 \to M_2)\text{ and}\\
    t_N &= ((Y,N_1) \from N_0 \to N_2)
  \end{align*}
  in the functor category $\Fun_{\mc{LM}^\otimes}(\Lambda^2_0,\mc C^\otimes)$, the mapping space between them is the homotopy pullback
  \[
    \begin{tikzcd}
      \Map_{\Fun_{\mc{LM}^\otimes}(\Lambda^2_0,\mc C^\otimes)}(t_M, t_N) \ar[r] \ar[d] &
      \Map_{\mc M}(M_2, N_2) \ar[d] \\
      \Map_{\E_0}(X,Y) \times \Map_{\Ar(\mc M)}(M_0 \to M_1, N_0 \to N_1) \ar[r] &
      \Map_{\mc M}(M_0, N_2).
    \end{tikzcd}
  \]
\end{proposition}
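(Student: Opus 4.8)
The plan is to express both mapping spaces as homotopy limits over twisted arrow categories using Corollary~\ref{cor:cocartesiansectionmaps}, and then to reduce those homotopy limits combinatorially. Form the coCartesian fibration $\mc B = \Delta^2 \times_{\mc{LM}^\otimes} \mc C^\otimes \to \Delta^2$ of the reminder preceding Definition~\ref{def:triple} (and, for the second statement, its restriction over the subcomplex $\Lambda^2_0 \subset \Delta^2$). Its fibers over $0,1,2$ are $\mc M$, $\mc C \times \mc M$, $\mc M$, and the coCartesian pushforwards along the edges $0\to 1$, $1\to 2$, $0\to 2$ are $M \mapsto (\unit, M)$, $(X,M) \mapsto X \otimes M$, and their composite $M \mapsto \unit \otimes M \simeq M$. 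Under the identification $\Fun_{\mc{LM}^\otimes}(\Delta^2, \mc C^\otimes) \simeq \Fun_{\Delta^2}(\Delta^2, \mc B)$ the sections $s_M, s_N$ are precisely the data listed in the reminder, so Corollary~\ref{cor:cocartesiansectionmaps} gives a natural equivalence
\[
\Map(s_M, s_N) \simeq \holim_{(f\co i\to j) \in \TwArr(\Delta^2)} V(f), \qquad V(f) = \Map_{\mc B_j}\bigl(f_! s_M(i), s_N(j)\bigr).
\]
Writing $[i,j]$ for the object of $\TwArr(\Delta^2)$ corresponding to the arrow $i \to j$, one computes $V([0,0]) = \Map_{\mc M}(M_0,N_0)$, $V([1,1]) = \Map_{\mc C}(X,Y) \times \Map_{\mc M}(M_1,N_1)$, $V([2,2]) = \Map_{\mc M}(M_2,N_2)$, $V([0,1]) = \Map_{\mc C}(\unit,Y) \times \Map_{\mc M}(M_0,N_1)$, $V([1,2]) = \Map_{\mc M}(X \otimes M_1, N_2)$, and $V([0,2]) = \Map_{\mc M}(M_0,N_2)$.

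The heart of the argument is the reduction of this homotopy limit over the six-element poset $\TwArr(\Delta^2)$. The object $[0,2]$ is terminal, and deleting it gives a coinitial subcategory: below every other object the poset is unchanged and has a maximum, while the full poset of objects strictly below $[0,2]$ is the fence $[0,0] \to [0,1] \from [1,1] \to [1,2] \from [2,2]$, whose nerve is a contractible tree. This fence is the pushout of the two spans $\{[0,0] \to [0,1] \from [1,1]\}$ and $\{[1,1] \to [1,2] \from [2,2]\}$ along their common object $[1,1]$; since forming functor $\infty$-categories turns this homotopy pushout of index categories into a homotopy pullback and homotopy limits are compatible with that, $\holim_{\TwArr(\Delta^2)} V$ is the iterated homotopy pullback
\[
\bigl(V([0,0]) \times^h_{V([0,1])} V([1,1])\bigr) \times^h_{V([1,1])} \bigl(V([1,1]) \times^h_{V([1,2])} V([2,2])\bigr),
\]
which the pasting law for homotopy pullbacks rewrites as $\bigl(V([0,0]) \times^h_{V([0,1])} V([1,1])\bigr) \times^h_{V([1,2])} V([2,2])$.

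Finally I would identify $V([0,0]) \times^h_{V([0,1])} V([1,1])$. Because $V([0,1])$ is a product, this homotopy pullback splits into two. On the $\mc C$-factor the two structure maps are composition with the unit $\unit \to X$ of $s_M$ and the constant map at the unit $\unit \to Y$ of $s_N$, whose homotopy pullback is by definition $\Map_{\E_0}(X,Y)$; on the $\mc M$-factor they are precomposition with the map $M_0 \to M_1$ of $s_M$ and postcomposition with the map $N_0 \to N_1$ of $s_N$, whose homotopy pullback is $\Map_{\Ar(\mc M)}(M_0 \to M_1, N_0 \to N_1)$ by Corollary~\ref{cor:arrowmaps}. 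Tracing the two remaining structure maps into $V([1,2])$ — precomposition with the action $X \otimes M_1 \to M_2$ of $s_M$, and $(g,h) \mapsto (Y \otimes N_1 \to N_2) \circ (g \otimes h)$ using the action of $s_N$ — yields exactly the asserted square. The second statement runs in parallel: $\TwArr(\Lambda^2_0)$ is already the fence $[1,1] \to [0,1] \from [0,0] \to [0,2] \from [2,2]$ centered at $[0,0]$, its homotopy limit splits at $[0,0]$, the right-hand span contributes $V([0,0]) \times^h_{V([0,2])} V([2,2])$ with $V([0,2]) = \Map_{\mc M}(M_0, N_2)$ since the composite edge $0 \to 2$ pushes forward to $\unit \otimes (-) \simeq \id$, and pasting gives the claimed diagram.

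The main obstacle I anticipate is this combinatorial reduction: verifying the coinitiality and pushout decomposition of $\TwArr(\Delta^2)$, and — more error-prone — reading off correctly from Corollary~\ref{cor:cocartesiansectionmaps} which of the structure maps of $s_M$ and $s_N$ occur as the legs of each homotopy pullback above. Once those identifications are pinned down, the rest is bookkeeping about fibers and pushforwards over $\mc{LM}^\otimes$.
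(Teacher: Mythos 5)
Your proposal is correct and follows essentially the same route as the paper: apply Corollary~\ref{cor:cocartesiansectionmaps}, compute the six (resp.\ five) terms of the limit over $\TwArr(\Delta^2)$ (resp.\ $\TwArr(\Lambda^2_0)$), and collapse the diagram to the stated pullback square. The only difference is that you spell out the cofinality of deleting the terminal object $[0,2]$ and the fence decomposition, which the paper compresses into the phrase ``by first taking pullback of the two maps contained in the left side of the diagram''; your identifications of the structure maps all match.
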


\begin{proof}
  By Corollary~\ref{cor:cocartesiansectionmaps}, the mapping space between the sections $s_M$ and $s_N$ in $\Map_{\Fun_{\mc{LM}^\otimes}}(\Delta^2, \mc C^\otimes)$ is the homotopy limit of the following commutative diagram, indexed by the twisted arrow category of $\Delta^2$:
  \[
    \begin{tikzcd}
      && \Map_{\mc M}(M_2, N_2) \ar[d] \ar[dd,bend left=90] \\
      & \Map_{\mc C}(X,Y) \times \Map_{\mc M}(M_1, N_1) \ar[r] \ar[d] &
      \Map_{\mc M}(X \otimes M_1, N_2) \ar[d] \\
      \Map_{\mc M}(M_0, N_0) \ar[r] \ar[rr,bend right=10] &
      \Map_{\mc C}(\unit, Y) \times \Map_{\mc M}(M_0, N_1) \ar[r] &
      \Map_{\mc M}(M_0, N_2)
    \end{tikzcd}
  \]
  By first taking pullback of the two maps contained in the left side of the diagram, we can re-express this homotopy limit as the desired pullback. The mapping space between sections $t_M$ and $t_N$ in $\Fun_{\mc{LM}^\otimes}(\Lambda^2_0,\mc C^\otimes)$ is similarly computed by the homotopy limit of the subdiagram
  \[
    \begin{tikzcd}
      & \Map_{\mc C}(X,Y) \times \Map_{\mc M}(M_1, N_1) \ar[d] &
      \\
      \Map_{\mc M}(M_0, N_0) \ar[r] \ar[rr,bend right=10] &
      \Map_{\mc C}(\unit, Y) \times \Map_{\mc M}(M_0, N_1) &
      \Map_{\mc M}(M_0, N_2),
    \end{tikzcd}
  \]
  again computed by first taking homotopy pullback of the left portion of the diagram.
\end{proof}

\begin{corollary}\label{cor:actionmapping}
  For sections $s_M, s_N$ over $\Delta^2$ or $t_M, t_N$ over $\Lambda^2_0$ such that the maps $M_0 \to M_i$ and $N_0 \to N_i$ are all equivalences, we have a homotopy pullback diagram
  \[
    \begin{tikzcd}
      \Map(s_M, s_N) \ar[r] \ar[d] &
      \Map_{\mc M}(M_0,N_0) \ar[d] \\
      \Map{\E_0}(X,Y) \times \Map_{\mc M}(M_0,N_0) \ar[r] &
      \Map_{\mc M}(X \otimes M_0, N_0)
    \end{tikzcd}
  \]
  of function spaces in $\Fun_{\mc{LM}^\otimes}(\Delta^2, \mc C^\otimes)$, and a natural equivalence
  \[
    \begin{tikzcd}
      \Map(t_M, t_N) \to \Map_{\E_0}(X,Y) \times \Map_{\mc M}(M_0,N_0).
    \end{tikzcd}
  \]
  of function spaces in $\Fun_{\mc{LM}^\otimes}(\Lambda^2_0, \mc C^\otimes)$.
\end{corollary}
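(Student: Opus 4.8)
The plan is to feed the hypotheses straight into Proposition~\ref{prop:sectionmapping} and simplify the two homotopy pullback squares it produces, using Corollary~\ref{cor:arrowmaps} to dispose of the arrow-category factor.

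First I would dispatch the arrow-category factor. When $M_0 \to M_1$ and $N_0 \to N_1$ are equivalences, Corollary~\ref{cor:arrowmaps} gives a homotopy pullback square in which the bottom map $\Map_{\mc M}(M_1,N_1) \to \Map_{\mc M}(M_0,N_1)$ is precomposition with the equivalence $M_0 \to M_1$, hence an equivalence; so the top map, the source projection $\Map_{\Ar(\mc M)}(M_0 \to M_1, N_0 \to N_1) \to \Map_{\mc M}(M_0,N_0)$, is an equivalence. (Using instead that $N_0 \to N_1$ is an equivalence shows the target projection is an equivalence too.) Thus under the standing hypothesis $\Map_{\Ar(\mc M)}(M_0 \to M_1, N_0 \to N_1) \simeq \Map_{\mc M}(M_0,N_0)$, naturally in the two sections.

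For the sections over $\Delta^2$ I would then take the homotopy pullback square of Proposition~\ref{prop:sectionmapping} and replace each corner by an equivalent space along the given maps: the equivalences $M_0 \simeq M_1 \simeq M_2$ and $N_0 \simeq N_1 \simeq N_2$ turn $\Map_{\mc M}(M_2,N_2)$ into $\Map_{\mc M}(M_0,N_0)$ and $\Map_{\mc M}(X \otimes M_1, N_2)$ into $\Map_{\mc M}(X \otimes M_0, N_0)$, while the first step rewrites $\Map_{\E_0}(X,Y) \times \Map_{\Ar(\mc M)}(M_0 \to M_1, N_0 \to N_1)$ as $\Map_{\E_0}(X,Y) \times \Map_{\mc M}(M_0,N_0)$. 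Since a homotopy pullback square is preserved under replacing its corners by equivalent spaces compatibly, this yields the first claimed square. For the sections over $\Lambda^2_0$ I would again apply Proposition~\ref{prop:sectionmapping}, observing that the right-hand vertical map $\Map_{\mc M}(M_2,N_2) \to \Map_{\mc M}(M_0,N_2)$ of that homotopy pullback is precomposition with the equivalence $M_0 \to M_2$, hence an equivalence; consequently the left-hand vertical map $\Map(t_M,t_N) \to \Map_{\E_0}(X,Y) \times \Map_{\Ar(\mc M)}(M_0 \to M_1, N_0 \to N_1)$ is an equivalence, and composing with the identification from the first step gives the asserted natural equivalence $\Map(t_M,t_N) \to \Map_{\E_0}(X,Y) \times \Map_{\mc M}(M_0,N_0)$.

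There is essentially no obstacle here: all the real work is already contained in Proposition~\ref{prop:sectionmapping}. The only things to check carefully are that each map being inverted is genuinely induced by one of the hypothesized equivalences (so that the resulting identifications are natural in $s_M,s_N$ and in $t_M,t_N$), and that one keeps straight which of the two structure maps, $M_0 \to M_1$ or $M_0 \to M_2$, collapses which corner in each of the two cases.
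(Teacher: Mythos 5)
Your proof is correct and is exactly the intended argument: the paper states this corollary without proof as an immediate specialization of Proposition~\ref{prop:sectionmapping}, and your use of Corollary~\ref{cor:arrowmaps} to collapse the arrow-category factor, together with replacing the remaining corners along the hypothesized equivalences (and, in the $\Lambda^2_0$ case, noting that the right-hand vertical map becomes an equivalence), is precisely the verification being left to the reader.
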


\begin{definition}\label{def:EZaction}
  For an $\E_0$-algebra $X$ in $\mc C$ and an object $M$ in $\mc M$, a \emph{left action} of $X$ is a map $\lambda_M\co X \otimes M \to M$ with a commutative diagram
  \[
    \begin{tikzcd}
      & X \otimes M \ar[dr, "\lambda_M"] \\
      \unit \otimes M \ar[ur, "\eta \otimes 1"] \ar[rr,"\simeq", swap] && M.
    \end{tikzcd}
  \]
\end{definition}

Our ultimate goal is to relate actions of $X$ to module structures over an algebra freely generated by $X$; this will occur when the category $\mc M$ has centralizers in $\mc C$. To prepare for this, the remainder of this section is devoted to  understanding categories of objects with action, and in particular spaces of maps between such objects.

\begin{definition}\label{def:EZmod}
  The categories $\EZMod_X(\mc M)$, of objects with left action by $X$, and $\EZMod(\mc M)$, of pairs of an $\E_0$-algebra and an object it acts on, are the $\infty$-categories defined by the following pullback squares:
  \[
    \begin{tikzcd}
      \EZMod_X(\mc M)\ar[r] \ar[d] &
      \EZMod(\mc M) \ar[r] \ar[d] &
      \Fun_{\mc{LM}^\otimes}(\Delta^2, \mc C^\otimes) \ar[d] \\
      \{X\} \times \mc M \ar[r] &
      \Alg_{\E_0}(M) \times \mc M \ar[r] &
      \Fun_{\mc{LM}^\otimes}(\Lambda^2_0,\mc C^\otimes).
    \end{tikzcd}
  \]
  Here the rightmost-bottom functor $\Alg_{\E_0}(M) \times \mc M \to \Fun_{\mc{LM}^\otimes}(\Lambda^2_0,\mc C)$ sends $(X,M)$ to the diagram
\[
  \begin{tikzcd}
    & X \otimes M \\
    \unit \otimes M \ar[ur, "\eta \otimes 1"] \ar[rr,"\simeq", swap] && M.
  \end{tikzcd}
\]
\end{definition}

\begin{proposition}\label{prop:EZmaps}
  Suppose that $\mc C$ is a monoidal $\infty$-category and that $\mc M$ is left-tensored over $\mc C$.

  The functor $\EZMod(\mc M) \to \Fun_{\mc{LM}^\otimes}(\Delta^2, \mc C^\otimes)$ is fully faithful, with essential image consisting of those sections $s_M  = (M_0 \to (X,M_1) \to M_2)$ such that $M_0 \to M_i$ are equivalences.
  
  If $X$ is an $\E_0$-algebra in $\mc C$ and $M$ and $N$ are objects in $\EZMod_X(\mc M)$, the natural diagram
  \[
  \begin{tikzcd}
    \Map_{\EZMod_X(\mc M)}(M,N) \ar[r] \ar[d] &
    \Map_{\mc M}(M,N) \ar[d] \\
    \Map_{\mc M}(M,N) \ar[r] &
    \Map_{\mc M}(X \otimes M, N)
  \end{tikzcd}
  \]
  is a homotopy pullback diagram. Here the top and left-hand maps are forgetful, while the bottom map is $f \mapsto \lambda_N \circ (1 \otimes f)$ and the right-hand map is $f \mapsto f \circ \lambda_M$.
\end{proposition}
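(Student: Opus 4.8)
The plan is to prove the two assertions of Proposition~\ref{prop:EZmaps} in sequence, using the section-mapping computations of Proposition~\ref{prop:sectionmapping} and its Corollary~\ref{cor:actionmapping} as the main engine.

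\textbf{Full faithfulness and essential image.} First I would unwind the definition of $\EZMod(\mc M)$ as the pullback in Definition~\ref{def:EZmod}. The functor $\EZMod(\mc M) \to \Fun_{\mc{LM}^\otimes}(\Delta^2,\mc C^\otimes)$ is a base change of the functor $\Fun_{\mc{LM}^\otimes}(\Delta^2,\mc C^\otimes) \to \Fun_{\mc{LM}^\otimes}(\Lambda^2_0,\mc C^\otimes)$ along $\Alg_{\E_0}(\mc C) \times \mc M \to \Fun_{\mc{LM}^\otimes}(\Lambda^2_0,\mc C^\otimes)$, so I would first observe that this latter functor is itself fully faithful: a section over $\Lambda^2_0$ consists of the data $(X, M_1)$, an object $M_0$, and a map $(\unit, M_0) \to (X, M_1)$ in $\mc C^\otimes$ together with a map $M_0 \to M_2$; restricting to sections where $M_0 \to M_2$ is the identity (which is what the functor from $\Alg_{\E_0}(\mc C)\times\mc M$ picks out, after identifying $\unit\otimes M$ with $M$) is a fully faithful inclusion, because on mapping spaces this is visible from the second pullback square in Proposition~\ref{prop:sectionmapping} — the condition that $M_0 \to M_2$ be an equivalence makes the lower-left corner reduce correctly and cuts out exactly a union of path components. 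Then fully-faithfulness of the pullback functor follows because pullbacks of $\infty$-categories along categorical fibrations compute mapping spaces as homotopy pullbacks of mapping spaces, and a base change of a fully faithful functor is fully faithful. For the essential image I would simply note that an object of $\Fun_{\mc{LM}^\otimes}(\Delta^2,\mc C^\otimes)$ lies in the image iff its restriction to $\Lambda^2_0$ lies in the image of $\Alg_{\E_0}(\mc C)\times\mc M$, which (after identifying $\unit\otimes M\simeq M$) is exactly the condition that $M_0 \to M_1$ — equivalently all the $M_0 \to M_i$ — be equivalences; here one uses that the $\Delta^2$-section forces $M_2$ to receive a compatible map, and equivalences are detected levelwise.

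\textbf{The mapping-space pullback square.} With $\EZMod_X(\mc M)$ identified (by restricting the above along $\{X\}\times\mc M \hookrightarrow \Alg_{\E_0}(\mc C)\times\mc M$) as a full subcategory of $\Fun_{\mc{LM}^\otimes}(\Delta^2,\mc C^\otimes)$ consisting of sections $s_M$ with $M_0 \xrightarrow{\sim} M_1 \xrightarrow{\sim} M_2$, $X$-fixed, the mapping space $\Map_{\EZMod_X(\mc M)}(M,N)$ is computed by the first homotopy pullback square of Corollary~\ref{cor:actionmapping} with $X = Y$: namely it is the pullback of
\[
  \Map_{\E_0}(X,X)\times \Map_{\mc M}(M,N) \to \Map_{\mc M}(X\otimes M, N) \leftarrow \Map_{\mc M}(M,N),
\]
where the right-hand map is postcomposition along $M \simeq \unit \otimes M \hookrightarrow X \otimes M \to$ wait — I would here be careful to track which structure maps appear. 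Then I would take the further fiber over the point $\id_X \in \Map_{\E_0}(X,X)$: the inclusion $\EZMod_X(\mc M) \to \EZMod(\mc M)$ is a pullback along $\{X\} \to \Alg_{\E_0}(\mc C)$, so $\Map_{\EZMod_X(\mc M)}(M,N)$ is the fiber of $\Map_{\EZMod(\mc M)}(M,N) \to \Map_{\E_0}(X,X)$ over $\id_X$. Substituting and simplifying the resulting iterated pullback, the $\Map_{\E_0}(X,X)$ factor drops out and one is left with the homotopy pullback
\[
  \begin{tikzcd}
    \Map_{\EZMod_X(\mc M)}(M,N) \ar[r] \ar[d] & \Map_{\mc M}(M,N) \ar[d] \\
    \Map_{\mc M}(M,N) \ar[r] & \Map_{\mc M}(X\otimes M,N),
  \end{tikzcd}
\]
where one map encodes "precompose with $\lambda_M$" (this is the leg coming from the $X\otimes M_1 \to M_2$ part of the section, tracked through the identifications $M_1 \simeq M$, $M_2 \simeq M$) and the other encodes "$f \mapsto \lambda_N \circ (1\otimes f)$" (the leg coming from functoriality of $X\otimes(-)$ in the target $N_1 \simeq N$ followed by $\lambda_N$). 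Identifying these two maps precisely with the stated formulas is the bookkeeping step.

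\textbf{Main obstacle.} I expect the genuine difficulty to be not the homotopy-pullback formalism — which is a routine consequence of Proposition~\ref{prop:sectionmapping}, Corollary~\ref{cor:actionmapping}, and the stability of fully faithful functors and mapping-space pullbacks under base change — but rather the careful identification of the two legs of the square with the explicit maps $f \mapsto f\circ\lambda_M$ and $f \mapsto \lambda_N\circ(1\otimes f)$. This requires unwinding exactly how the twisted-arrow-category homotopy limit in the proof of Proposition~\ref{prop:sectionmapping} organizes the data of a morphism of sections, keeping track of the degeneracy identifying $\unit\otimes M \simeq M$ and the (coherent) commuting triangle in Definition~\ref{def:EZaction}, and checking that the asymmetry between "precompose" and "postcompose with the action" comes out on the correct sides. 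A secondary subtlety is making sure the fiber over $\id_X \in \Map_{\E_0}(X,X)$ genuinely collapses the $\Map_{\E_0}$-factors in both the source and the target of the pullback square consistently, so that the square that survives is the clean one stated; this uses that $\Map_{\E_0}(X,X)$ appears as a retract-compatible factor and that $\id_X$ is the basepoint picked out by both $M$ and $N$ as objects of $\EZMod_X(\mc M)$.
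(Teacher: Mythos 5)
Your proposal is correct and follows essentially the same route as the paper: full faithfulness of $\Alg_{\E_0}(\mc C)\times\mc M \to \Fun_{\mc{LM}^\otimes}(\Lambda^2_0,\mc C^\otimes)$ via Corollary~\ref{cor:actionmapping}, base change to get full faithfulness of $\EZMod(\mc M)\to\Fun_{\mc{LM}^\otimes}(\Delta^2,\mc C^\otimes)$ together with the identification of the essential image, and then passage to the fiber over $\id_X\in\Map_{\E_0}(X,X)$ in the first pullback square of Corollary~\ref{cor:actionmapping} to obtain the stated mapping-space square. The "bookkeeping" you flag as the main obstacle is exactly what the paper also leaves implicit, deferring to Corollary~\ref{cor:actionmapping}.
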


\begin{proof}
  Corollary~\ref{cor:actionmapping} shows that the map $\Alg_{\E_0}(\mc C) \times \mc M \to \Fun_{\mc{LM}^\otimes}(\Lambda^2_0, \mc C^\otimes)$ is fully faithful, with essential image consisting of those sections such that $M_0 \to M_1$ and $M_0 \to M_2$ are equivalences. The pullback map $\EZMod(\mc M) \to \Fun_{\mc{LM}^\otimes}(\Delta^2, \mc C^\otimes)$ is therefore fully faithful, with mapping spaces computed as above.

  To compute mapping spaces in $\EZMod_X(\mc M)$ for $X$ a fixed $\E_0$-algebra, we have to take the fiber product over $\Alg_{\E_0}(\mc C)$ with $\{X\}$. For objects $M$ and $N$ with action by $X$, we form the homotopy fiber product
  \[
    \Map_{\EZMod(\mc M)}((X,M),(X,N)) \times_{\Map_{\Alg_{\E_0}(\mc C)}(X,X)} \{\id_X\}
  \]
  which is the desired pullback diagram by Corollary~\ref{cor:actionmapping}.
\end{proof}

\begin{proposition}\label{prop:EZpushout}
  Suppose that the forgetful functor $U\co \EZMod_X(\mc M) \to \mc M$ has a left adjoint $L$. Then there exists a natural transformation $\lambda\co L(X \otimes P) \to L(P)$, such that for any object $M \in \EZMod_X(\mc M)$ with left action $\lambda_M\co X\otimes UM \to UM$, the diagram
  \[
    \begin{tikzcd}
      L(X \otimes UM) \ar[r,"\lambda"] \ar[d,"L(\lambda_M)",swap] &
      LUM \ar[d,"\epsilon"] \\
      LUM \ar[r,"\epsilon",swap] &
      UM
    \end{tikzcd}
  \]
  is a homotopy pushout in $\EZMod_X(\mc M)$.
\end{proposition}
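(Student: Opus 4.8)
The plan is to detect the homotopy pushout via mapping spaces. A commutative square in any $\infty$-category is a homotopy pushout precisely when, for every object $N$, applying $\Map(-,N)$ produces a homotopy pullback square of spaces. So I would fix $N \in \EZMod_X(\mc M)$ and apply $\Map_{\EZMod_X(\mc M)}(-,N)$ to the candidate square. Three of the four resulting corners involve free objects, so the adjunction $L \dashv U$ identifies $\Map_{\EZMod_X(\mc M)}(LUM, N)$ with $\Map_{\mc M}(UM, UN)$ (in two corners) and $\Map_{\EZMod_X(\mc M)}(L(X \otimes UM), N)$ with $\Map_{\mc M}(X \otimes UM, UN)$. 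The fourth corner $\Map_{\EZMod_X(\mc M)}(M, N)$ is, by Proposition~\ref{prop:EZmaps}, exactly the homotopy pullback of $\Map_{\mc M}(UM, UN) \to \Map_{\mc M}(X \otimes UM, UN) \from \Map_{\mc M}(UM, UN)$, with legs $f \mapsto \lambda_N \circ (1 \otimes f)$ and $f \mapsto f \circ \lambda_M$. Hence after these identifications the image square already has the right four corners, and everything comes down to matching the four structure maps.

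To set this up I would first construct $\lambda$. The assignment $(N, g) \mapsto \lambda_N \circ (1 \otimes g)$ is a natural transformation of corepresentable functors $\Map_{\mc M}(P, U(-)) \to \Map_{\mc M}(X \otimes P, U(-))$ on $\EZMod_X(\mc M)$ — naturality here uses precisely that every morphism in $\EZMod_X(\mc M)$ commutes with the action maps — so via the adjunction equivalences $\Map_{\mc M}(P, U(-)) \simeq \Map_{\EZMod_X(\mc M)}(LP, -)$ and the Yoneda lemma it is induced by a unique map $\lambda\co L(X \otimes P) \to LP$, and uniqueness makes $\lambda$ natural in $P$. With $\lambda$ defined this way, precomposition with $\lambda$ realizes the leg $f \mapsto \lambda_N \circ (1 \otimes f)$ by construction. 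For the remaining maps: both maps induced by the counit $\epsilon\co LUM \to M$ become the forgetful map $\Map_{\EZMod_X(\mc M)}(M,N) \to \Map_{\mc M}(UM, UN)$ by the triangle identity $U\epsilon \circ \eta U \simeq \id$, while the map induced by $L(\lambda_M)$ becomes $f \mapsto f \circ \lambda_M$ by naturality of the unit $\eta$. The same computations, applied to $\id_{LUM}$, also show that $\epsilon \circ \lambda \simeq \epsilon \circ L(\lambda_M)$, so the square commutes in the first place. Assembling all of this, the square produced by $\Map(-,N)$ is identified with the pullback square of Proposition~\ref{prop:EZmaps}, hence is a homotopy pullback; as $N$ was arbitrary, the candidate square is a homotopy pushout.

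The main obstacle is the compatibility asserted in the last paragraph: it is not enough that the four corners agree up to equivalence, one needs the comparison equivalences to intertwine all four structure maps, coherently and naturally in $N$, so that one genuinely obtains an equivalence of commutative squares of spaces rather than four unrelated equivalences. I would handle this by running the entire comparison functorially in $N$ and deducing it from Yoneda — just as in the construction of $\lambda$ — so that the coherences are forced rather than checked by hand; the only real inputs remain the triangle identities for $L \dashv U$ and the defining property of morphisms of $\EZMod_X(\mc M)$.
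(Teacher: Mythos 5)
Your proposal is correct and matches the paper's argument: the paper also constructs $\lambda$ as the adjoint of $X \otimes P \to X \otimes LP \to LP$ (the same map your Yoneda argument produces), identifies the mapped square with the natural homotopy pullback of Proposition~\ref{prop:EZmaps} via the adjunction, and concludes since $N$ is arbitrary. The coherence point you flag is handled there exactly as you suggest, by keeping the comparison natural in $N$ throughout.
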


\begin{proof}
  The action of $X$ on $L(P)$ determines a composite
  \[
  X \otimes P \to X \otimes L(P) \to L(P)
  \]
  in $\mc M$, whose adjoint is a natural map $\lambda\co L(X \otimes P) \to L(P)$. This has the property that for any map $M \to N$ in $\EZMod_X(\mc M)$, the composite
  \[
    L(X \otimes UM) \xrightarrow{\lambda} L(UM) \to N
  \]
  is adjoint to the map $X \otimes UM \to X \otimes UN \xrightarrow{\lambda_N} UN$.
  
  The adjunction means that, by Proposition~\ref{prop:EZmaps}, there is a natural homotopy pullback diagram
  \[
    \begin{tikzcd}
      \Map_{\EZMod_X}(M,N) \ar[r,"\epsilon^*"] \ar[d,"\epsilon^*",swap] &
      \Map_{\EZMod_X}(LUM,N) \ar[d,"\lambda^*"] \\
      \Map_{\EZMod_X}(LUM,N) \ar[r,"L(\lambda_M)^*",swap] &
      \Map_{\EZMod_X}(L(X \otimes UM),N).
    \end{tikzcd}
  \]
  Since $N$ is an arbitrary object in $\EZMod_X$, this implies that there is a corresponding homotopy pushout diagram for $M$.
\end{proof}

\begin{proposition}\label{prop:actionsovercenter}
  For an object $M \in \mc M$ with center $\mf Z(M)$, there is an equivalence
  \[
    \Alg_{\E_0}(\mc C)_{/\mf Z(M)} \simeq 
    \EZMod(\mc M) \times_{\mc M} \{M\}
  \]
  between the category of $\E_0$-algebras over $\mf Z(M)$ and the category of $\E_0$-algebras acting on $M$.
\end{proposition}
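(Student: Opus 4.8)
The plan is to run, \emph{mutatis mutandis}, the argument used for Proposition~\ref{prop:algebrasovercenter}, replacing $\E_1$-algebras and $\LMod(\mc M)$ throughout by $\E_0$-algebras and $\EZMod(\mc M)$.

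First I would pin down the two sides. By Proposition~\ref{prop:EZmaps} the category $\EZMod(\mc M)\times_{\mc M}\{M\}$ is the $\infty$-category of $\E_0$-algebras $X$ equipped with a unital left action $\lambda_X\co X\otimes M\to M$ in the sense of Definition~\ref{def:EZaction}; comparing with Definition~\ref{def:centralizer} for the morphism $\id_M$, this is equivalent to the $\infty$-category $(\mc C_{\unit/})\times_{\mc M_{\unit\otimes M/}}\mc M_{\unit\otimes M//M}$, whose final object is the centralizer $\mf Z(\id_M)$. The hypothesis that $M$ has a center means precisely that this centralizer exists, and $\mf Z(\id_M)\simeq\mf Z(M)$ as $\E_0$-algebras by \cite[Proposition~5.3.1.8]{LurieHA}; so $\EZMod(\mc M)\times_{\mc M}\{M\}$ has $\mf Z(M)$, with its canonical action on $M$, as a final object. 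Sending each object to its essentially unique map to this final object and composing with the forgetful functor $q\co\EZMod(\mc M)\times_{\mc M}\{M\}\to\Alg_{\E_0}(\mc C)$ yields a comparison functor
\[
  \Phi\co\EZMod(\mc M)\times_{\mc M}\{M\}\to\Alg_{\E_0}(\mc C)_{/\mf Z(M)},
\]
and it remains to show $\Phi$ is an equivalence.

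The key input, playing the role that \cite[Corollary~4.7.1.42]{LurieHA} plays in Proposition~\ref{prop:algebrasovercenter}, is that $q$ is a right fibration. I would establish this directly from Section~\ref{sec:actions}: specializing Corollary~\ref{cor:actionmapping} to the fiber over $\{M\}$ (where a morphism covers $\id_M$, which forces the comparison maps on all the underlying $\mc M$-objects to be $\id_M$), one finds that a morphism $(X,\lambda_X)\to(Y,\lambda_Y)$ of objects with action is the data of an $\E_0$-algebra map $\phi\co X\to Y$ together with a homotopy $\lambda_X\simeq\lambda_Y\circ(\phi\otimes\id_M)$; equivalently, $\Map_{\EZMod(\mc M)\times_{\mc M}\{M\}}((X,\lambda_X),(Y,\lambda_Y))$ is the homotopy fiber over $\lambda_X$ of the map $\Map_{\E_0}(X,Y)\to\Map_{\mc M}(X\otimes M,M)$ given by $\phi\mapsto\lambda_Y\circ(\phi\otimes\id_M)$. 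From this description the fibers of $q$ are $\infty$-groupoids (over a fixed algebra, a morphism of actions is just a path of action maps) and every morphism of $\EZMod(\mc M)\times_{\mc M}\{M\}$ is $q$-Cartesian (whiskering such a path with a fixed path is an equivalence of path spaces), so $q$ is a Cartesian fibration with groupoidal fibers, hence a right fibration.

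With $q$ a right fibration and $\mf Z(M)$ final, the remainder is formal and identical to Proposition~\ref{prop:algebrasovercenter}: for any $A,B$ and the unique morphism $B\to\mf Z(M)$, \cite[Proposition~2.4.4.3]{LurieHTT} produces a homotopy pullback square relating the four mapping spaces out of $A$ into $B$ and $\mf Z(M)$, taken in $\EZMod(\mc M)\times_{\mc M}\{M\}$ and in $\Alg_{\E_0}(\mc C)$; since the mapping space from $A$ to the final object $\mf Z(M)$ is contractible, taking homotopy fibers of the two horizontal maps exhibits $\Map(A,B)$ as the homotopy fiber of $\Map_{\Alg_{\E_0}(\mc C)}(qA,qB)\to\Map_{\Alg_{\E_0}(\mc C)}(qA,\mf Z(M))$ over the structure map $qA\to\mf Z(M)$, i.e. as $\Map_{\Alg_{\E_0}(\mc C)_{/\mf Z(M)}}(\Phi A,\Phi B)$. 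Thus $\Phi$ is fully faithful, and it is essentially surjective because any $\E_0$-algebra map $X\to\mf Z(M)$ lifts, along the right fibration $q$, to a morphism in $\EZMod(\mc M)\times_{\mc M}\{M\}$ with source lying over $X$; hence $\Phi$ is an equivalence. The main obstacle is the right-fibration statement for $q$; once the mapping-space computations of Section~\ref{sec:actions} are in hand it is routine, and everything after it is formal.
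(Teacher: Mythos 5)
Your proposal is correct and matches the paper's argument: both rest on the mapping-space pullback of Corollary~\ref{cor:actionmapping} together with the finality of $\mf Z(M)$ among $\E_0$-algebras acting on $M$. The only cosmetic difference is that you package the key step as ``$q$ is a right fibration'' before invoking \cite[Proposition~2.4.4.3]{LurieHTT}, whereas the paper gets the comparison square directly from naturality in $B$ of the pullback (whose right-hand leg is independent of $B$) and only records the right-fibration statement in the remark afterwards.
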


\begin{proof}
  For algebras $A$ and $B$ acting on $M$, there is a homotopy pullback diagram
  \[
    \begin{tikzcd}
      \Map_{\EZMod(\mc M)}((A,M), (B,M)) \ar[r] \ar[d] &
      \Map_{\mc M}(M,M) \ar[d] \\
      \Map_{\Alg_{\E_0}(\mc C)}(A,B) \times \Map_{\mc M}(M,M) \ar[r] &
      \Map_{\mc M}(A \otimes M,M)
    \end{tikzcd}
  \]
  by Corollary~\ref{cor:actionmapping}. Taking the fiber product over $\Map_{\mc M}(\unit \otimes M,M)$ with the canonical equivalence $\unit \otimes M \to M$, we get a homotopy pullback
  \[
    \begin{tikzcd}
      \Map_{\EZMod(\mc M) \times_{\mc M} \{M\}}(A, B) \ar[r] \ar[d] &
      \{\id_M\} \ar[d] \\
      \Map_{\Alg_{\E_0}(\mc C)}(A,B) \ar[r] &
      \Map_{\mc M_{\unit \otimes M /}}(A \otimes M,M).
    \end{tikzcd}
  \]
  This is natural in $B$, and the right-hand vertical map is independent of $B$. Naturality of this diagram in $B$ then shows that, for any map $B \to C$ in $\EZMod(\mc M) \times{\mc M} \{M\}$, we get a homotopy pullback diagram
  \[
    \begin{tikzcd}
      \Map_{\EZMod(\mc M) \times_{\mc M} \{M\}}(A, B) \ar[r] \ar[d] &
      \Map_{\EZMod(\mc M) \times_{\mc M} \{M\}}(A, C) \ar[d] \\
      \Map_{\Alg_{\E_0}(\mc C)}(A,B) \ar[r] &
      \Map_{\Alg_{\E_0}(\mc C)}(A,C).
    \end{tikzcd}
  \]
  Taking $C=\mf Z(M)$, a final object in the category of $\E_0$-algebras with a unital action on $M$ by definition, makes the upper-right corner contractible. Therefore, we find that the forgetful functor induces an equivalence
  \[
    \Map_{\EZMod(\mc M) \times_{\mc M} \{M\}}(A, B) \to \Map_{\Alg_{\E_0}(\mc C)_{/\mf Z(M)}} (A,B)
  \]
  as desired.
\end{proof}

\begin{remark}
  The above proof shows that, up to equivalence, the forgetful functor $\EZMod(\mc M) \times_{\mc M} \{M\} \to \Alg_{\E_0}(M)$ is a right fibration, which is classified by $\mf Z(M)$ when it exists.
\end{remark}
  
\section{Actions by free algebras and centers}\label{sec:centers}

\begin{definition}\label{def:envelopingalgebra}
  Suppose that $X$ is an $\E_0$-algebra in a monoidal $\infty$-category $\mc C$. An \emph{enveloping algebra} for $X$ is an $\E_1$-algebra $\E X$ with a map $X \to \E X$ of $\E_0$-algebras such that, for any $\E_1$-algebra in $\mc C$, the composite
  \[
    \Map_{\Alg_{\E_1}(\mc C)}(\E X, A) \to \Map_{\Alg_{\E_0}(\mc C)}(\E X, A) \to    \Map_{\Alg_{\E_0}(\mc C)}(X, A)
  \]
  is an equivalence.
\end{definition}

\begin{proposition}\label{prop:modactcat}
  Suppose that $\mc C$ is a monoidal $\infty$-category and that $\mc M$ is left-tensored over $\mc C$. Let $A$ be an $\E_1$-algebra in $\mc C$ and $f\co X \to A$ a map of $\E_0$-algebras. Consider the commutative diagram
  \[
    \begin{tikzcd}
      \LMod_A(\mc M) \ar[r] \ar[dr] & \EZMod_A(\mc M) \ar[r] \ar[d] & \EZMod_X(\mc M) \ar[dl] \\
      &\mc M.
    \end{tikzcd}
  \]
  If $\mc M$ has centralizers, then for any $M \in \mc M$, the induced map on fibers over $\{M\}$ is equivalent to the map of spaces
  \[
    \Map_{\Alg_{\E_1}(\mc C)}(A, \mf Z(M)) \to \Map_{\Alg_{\E_0}(\mc C)}(X, \mf Z(M))
  \]
  induced by forgetting to $\E_0$-algebras and precomposing with $f$.
\end{proposition}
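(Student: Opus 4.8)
The plan is to identify each of the three fibered categories in the displayed diagram, over the object $\{M\}$ of $\mc M$, with a mapping space, and then track the functors through these identifications. First I would note that $\LMod_A(\mc M) = \LMod(\mc M)\times_{\Alg_{\E_1}(\mc C)}\{A\}$, and since the forgetful functors $\LMod(\mc M)\to\Alg_{\E_1}(\mc C)$ and $\LMod(\mc M)\to\mc M$ are categorical fibrations, the fiber of $\LMod_A(\mc M)\to\mc M$ over $M$ can be rewritten as $\bigl(\LMod(\mc M)\times_{\mc M}\{M\}\bigr)\times_{\Alg_{\E_1}(\mc C)}\{A\}$. Proposition~\ref{prop:algebrasovercenter} identifies $\LMod(\mc M)\times_{\mc M}\{M\}$ with $\Alg_{\E_1}(\mc C)_{/\mf Z(M)}$ over $\Alg_{\E_1}(\mc C)$: the equivalence constructed there sends an algebra acting on $M$ to its classifying map to the final object $\mf Z(M)$, whose domain is that algebra, so it commutes with the forgetful functors. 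Since the domain functor $\Alg_{\E_1}(\mc C)_{/\mf Z(M)}\to\Alg_{\E_1}(\mc C)$ is a right fibration whose fiber over $B$ is $\Map_{\Alg_{\E_1}(\mc C)}(B,\mf Z(M))$, the fiber over $A$ is $\Map_{\Alg_{\E_1}(\mc C)}(A,\mf Z(M))$. Running the same argument with Proposition~\ref{prop:actionsovercenter} and the remark following it gives $\EZMod_A(\mc M)\times_{\mc M}\{M\}\simeq\Map_{\Alg_{\E_0}(\mc C)}(A,\mf Z(M))$ and $\EZMod_X(\mc M)\times_{\mc M}\{M\}\simeq\Map_{\Alg_{\E_0}(\mc C)}(X,\mf Z(M))$, where $\mf Z(M)$ is read through its underlying $\E_0$-algebra, the $\E_0$-center of $M$.

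It then remains to identify the induced map. I would factor the functor $\LMod_A(\mc M)\to\EZMod_X(\mc M)$ as $\LMod_A(\mc M)\to\EZMod_A(\mc M)\to\EZMod_X(\mc M)$, matching the top edge of the displayed diagram, and treat the two factors separately. The first factor forgets an $\E_1$-module structure to its underlying unital $\E_0$-action; it fits into a square over the forgetful functor $\Alg_{\E_1}(\mc C)\to\Alg_{\E_0}(\mc C)$, and, because the underlying $\E_0$-algebra of $\mf Z(M)$ is a center in the $\E_0$-sense and classifying maps are compatible with forgetting, the equivalences above carry this square to the square $\Alg_{\E_1}(\mc C)_{/\mf Z(M)}\to\Alg_{\E_0}(\mc C)_{/\mf Z(M)}$ over $\Alg_{\E_1}(\mc C)\to\Alg_{\E_0}(\mc C)$; taking fibers over $A$ shows the first factor induces, on fibers over $M$, the forgetful map $\Map_{\Alg_{\E_1}(\mc C)}(A,\mf Z(M))\to\Map_{\Alg_{\E_0}(\mc C)}(A,\mf Z(M))$. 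The second factor restricts an $\E_0$-action along $f\co X\to A$; since it lies over $\mc M$ and does not change the underlying object, on fibers over $M$ it is the contravariant functoriality of the right fibration $\EZMod(\mc M)\times_{\mc M}\{M\}\to\Alg_{\E_0}(\mc C)$ along the morphism $f$, which is precomposition with $f$ on $\Map_{\Alg_{\E_0}(\mc C)}(-,\mf Z(M))$. Composing the two gives exactly the map ``forget to $\E_0$-algebras and precompose with $f$'' asserted in the statement.

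I expect the main obstacle to be bookkeeping around two compatibility points rather than any genuinely new argument. First, the equivalences of Propositions~\ref{prop:algebrasovercenter} and~\ref{prop:actionsovercenter} must be seen to intertwine the structural forgetful functors to $\Alg_{\E_1}(\mc C)$ and $\Alg_{\E_0}(\mc C)$, so that passing to the fiber over $A$ or $X$ yields the precise mapping spaces rather than merely equivalent ones; this is essentially immediate from how those equivalences are built, each being ``pass to the classifying map into the final object.'' Second, one needs that the underlying $\E_0$-algebra of the $\E_1$-center $\mf Z(M)$ is an $\E_0$-center of $M$, compatibly with classifying maps, which is what makes the forgetful square commute after transport along those equivalences; this is part of the basic theory of centralizers, since both centers are instances of the centralizer of $\id_M$ and these are compatible under $\Alg_{\E_k}(\mc C)\to\Alg_{\E_{k-1}}(\mc C)$ \cite[\S 5.3.1]{LurieHA}. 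With these settled, the remaining content is the manipulation of homotopy fibers of categorical and right fibrations carried out above.
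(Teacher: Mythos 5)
Your proposal is correct and follows essentially the same route as the paper: both identify the fibers over $\{M\}$ via Propositions~\ref{prop:algebrasovercenter} and~\ref{prop:actionsovercenter}, then recognize fibers of overcategories over $A$ and $X$ as the mapping spaces $\Map(A,\mf Z(M))$ and $\Map(X,\mf Z(M))$. The compatibility points you flag (the equivalences intertwining the forgetful functors, and the underlying $\E_0$-algebra of the $\E_1$-center being an $\E_0$-center) are left implicit in the paper's terser argument, so your extra care there is welcome but not a departure.
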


\begin{proof}
  The horizontal functors in this diagram are the composites
  \[
    \LMod(\mc M) \times_{\Alg_{\E_1}(\mc C)} \{A\} \to 
    \EZMod(\mc M) \times_{\Alg_{\E_0}(\mc C)} \{A\} \to 
    \EZMod(\mc M) \times_{\Alg_{\E_0}(\mc C)} \{X\}.
  \]
  Taking fiber products with $\{M\}$ over $\mc M$, by Propositions~\ref{prop:algebrasovercenter} and \ref{prop:actionsovercenter} and we get the diagram
  \[
    \Alg_{\E_1}(\mc C)_{/\mf Z(M)} \times_{\Alg_{\E_1}(\mc C)} \{A\} \to
    \Alg_{\E_0}(\mc C)_{/\mf Z(M)} \times_{\Alg_{\E_0}(\mc C)} \{A\} \to
    \Alg_{\E_0}(\mc C)_{/\mf Z(M)} \times_{\Alg_{\E_0}(\mc C)} \{X\}.
  \]
  However, fibers of overcategories are mapping spaces, and so this is equivalent to the sequence of maps
  \[
    \Map_{\Alg_{\E_1}(\mc C)} (A, \mf Z(M)) \to
    \Map_{\Alg_{\E_0}(\mc C)} (A, \mf Z(M)) \to
    \Map_{\Alg_{\E_0}(\mc C)} (X, \mf Z(M)),
  \]
  as desired.
\end{proof}

\begin{theorem}\label{thm:EZequivalence}
  Suppose that $\mc C$ is a monoidal $\infty$-category with pullbacks, that $\mc M$ is left-tensored over $\mc C$, and that $\mc M$ has centralizers in $\mc C$.
  
  If $X$ is an $\E_0$-algebra in $\mc C$ and $\E X$ is an enveloping algebra for $X$, then the forgetful functor
  \[
    \LMod_{\E X}(\mc M) \to \EZMod_{X}(\mc M)
  \]
  is an equivalence of $\infty$-categories.
\end{theorem}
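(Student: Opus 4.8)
The plan is to show that the forgetful functor $\Phi\co\LMod_{\E X}(\mc M)\to\EZMod_X(\mc M)$ — the composite of Proposition~\ref{prop:modactcat} applied with $A=\E X$ and $f\co X\to\E X$ the unit of the enveloping algebra, which sends an $\E X$-module to its underlying action $X\otimes M\to\E X\otimes M\to M$ — is an equivalence by verifying the criterion of Lemma~\ref{lem:moduliequiv}: that $\Phi$ induces weak equivalences $\LMod_{\E X}(\mc M)^\simeq\to\EZMod_X(\mc M)^\simeq$ and $\Ar(\LMod_{\E X}(\mc M))^\simeq\to\Ar(\EZMod_X(\mc M))^\simeq$. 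The first step is to reduce both requirements to a single claim. Since $\LMod_{\E X}(-)$ and $\EZMod_X(-)$ are assembled from (homotopy) limits that commute with $\Fun(\Delta^1,-)$, and the diagonal left-tensoring of $\Ar(\mc M)=\Fun(\Delta^1,\mc M)$ over $\mc C$ is $\Fun(\Delta^1,-)$ of that of $\mc M$, one gets compatible equivalences $\Ar(\LMod_{\E X}(\mc M))\simeq\LMod_{\E X}(\Ar(\mc M))$ and $\Ar(\EZMod_X(\mc M))\simeq\EZMod_X(\Ar(\mc M))$. Moreover, $\Ar(\mc M)$ is left-tensored over $\mc C$ and, by Theorem~\ref{thm:diagramcentralizer} with $K=\Delta^1$ — where the homotopy limits over $\TwArr(\Delta^1)$ are the pullbacks that $\mc C$ is assumed to possess — has centralizers in $\mc C$. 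Thus it suffices to prove: \emph{for every $\mc N$ left-tensored over $\mc C$ and possessing centralizers in $\mc C$, the forgetful functor $\LMod_{\E X}(\mc N)\to\EZMod_X(\mc N)$ is a weak equivalence on maximal subgroupoids.}

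To prove the claim, let $U_1\co\LMod_{\E X}(\mc N)\to\mc N$ and $U_2\co\EZMod_X(\mc N)\to\mc N$ be the forgetful functors, so $U_2\circ\Phi=U_1$. On maximal subgroupoids these are left (hence Kan) fibrations over $\mc N^\simeq$: transport of module structure, respectively of an action, along an equivalence in $\mc N^\simeq$ is essentially unique, so $\LMod(\mc N)^\simeq\to(\mc N\times\Alg_{\E_1}(\mc C))^\simeq$ and $\EZMod(\mc N)^\simeq\to(\mc N\times\Alg_{\E_0}(\mc C))^\simeq$ are left fibrations, and restricting to the strict fibers over $\E X$ and $X$ — which are the homotopy fibers, since the forgetful functors to $\Alg_{\E_1}(\mc C)$ and $\Alg_{\E_0}(\mc C)$ are categorical fibrations — gives left fibrations $U_1^\simeq$ and $U_2^\simeq$ over $\mc N^\simeq$. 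By Propositions~\ref{prop:algebrasovercenter} and \ref{prop:actionsovercenter}, the fibers of $U_1$ and $U_2$ over $M\in\mc N$ are $\Map_{\Alg_{\E_1}(\mc C)}(\E X,\mf Z(M))$ and $\Map_{\Alg_{\E_0}(\mc C)}(X,\mf Z(M))$, using the center $\mf Z(M)=\mf Z(\id_M)$ which exists since $\mc N$ has centralizers; these are the values of the straightenings of $U_1^\simeq$ and $U_2^\simeq$.

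Now apply Proposition~\ref{prop:modactcat} with $A=\E X$: at each $M$ the map of fibers induced by $\Phi$ is the map
\[
\Map_{\Alg_{\E_1}(\mc C)}(\E X,\mf Z(M))\to\Map_{\Alg_{\E_0}(\mc C)}(X,\mf Z(M))
\]
given by forgetting to $\E_0$-algebras and precomposing with $f$, and since these assignments are natural in $M$, $\Phi^\simeq$ is the morphism of left fibrations realizing the corresponding natural transformation of space-valued functors on $\mc N^\simeq$. By the defining universal property of the enveloping algebra (Definition~\ref{def:envelopingalgebra}, with target $\E_1$-algebra $\mf Z(M)$), this map is an equivalence for every $M$. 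A natural transformation of space-valued functors that is a pointwise equivalence induces an equivalence on the total spaces of the classified left fibrations, so $\Phi^\simeq\co\LMod_{\E X}(\mc N)^\simeq\to\EZMod_X(\mc N)^\simeq$ is a weak equivalence. This proves the claim; applying it to $\mc N=\mc M$ and $\mc N=\Ar(\mc M)$ verifies both hypotheses of Lemma~\ref{lem:moduliequiv}, and therefore $\Phi$ is an equivalence.

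The step I expect to require the most care is the passage to maximal subgroupoids in the second paragraph: one must check that taking maximal subgroupoids interacts correctly with the fiber-product presentations of $\LMod(\mc N)$ and $\EZMod(\mc N)$, so that $U_1^\simeq$ and $U_2^\simeq$ really are left fibrations with fibers computed by Propositions~\ref{prop:algebrasovercenter} and \ref{prop:actionsovercenter}, and that Proposition~\ref{prop:modactcat} genuinely identifies the comparison \emph{natural transformation} and not only its value at a single object. The reduction to the arrow categories via Theorem~\ref{thm:diagramcentralizer}, together with the routine identification $\Ar(\LMod_{\E X}(\mc M))\simeq\LMod_{\E X}(\Ar(\mc M))$, is exactly where the hypothesis that $\mc C$ has pullbacks is used.
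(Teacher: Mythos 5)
Your proposal is correct and follows essentially the same route as the paper: verify the criterion of Lemma~\ref{lem:moduliequiv} by showing the map on maximal subgroupoids is a fiberwise equivalence over $\mc M^\simeq$, using Proposition~\ref{prop:modactcat} together with the universal property of $\E X$ to identify the fiber map as $\Map_{\Alg_{\E_1}(\mc C)}(\E X,\mf Z(M))\to\Map_{\Alg_{\E_0}(\mc C)}(X,\mf Z(M))$, and then repeat the argument for $\Ar(\mc M)$, which has centralizers by Theorem~\ref{thm:diagramcentralizer}. Your additional care about the left-fibration structure, the identification $\Ar(\LMod_{\E X}(\mc M))\simeq\LMod_{\E X}(\Ar(\mc M))$, and the role of the pullback hypothesis (limits over $\TwArr(\Delta^1)$) fills in details the paper leaves implicit.
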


\begin{proof}
  By Proposition~\ref{prop:modactcat}, there is a forgetful map $\LMod_{\E X}(\mc M) \to \EZMod_X(\mc M)$ whose fiber over any $\{M\}$ is the equivalence
  \[
    \Map_{\Alg_{\E_1}(\mc C)}(\E X, \mf Z(M)) \to 
    \Map_{\Alg_{\E_0}(\mc C)}(X, \mf Z(M))
  \]
  of spaces. The map on maximal subgroupoids $\LMod_{\E X}(\mc M)^\simeq \to \EZMod_X(\mc M)^\simeq$ is then a map of spaces over $\mc M^\simeq$ that is an equivalence on the fiber over any object $\{M\}$, and therefore a weak equivalence.

  Applying the same result to the arrow category $\Ar(\mc M) = \Fun(\Delta^1, \mc M)$, which has centralizers in $\mc C$ by Theorem~\ref{thm:diagramcentralizer}, we find that there is an equivalence
  \[
    \Ar(\LMod_{\E X}(\mc M))^\simeq \to \Ar(\EZMod_X(\mc M))^\simeq.
  \]
  By Lemma~\ref{lem:moduliequiv}, the map $\LMod_{\E X}(\mc M) \to \EZMod_X(\mc M)$ is then an equivalence.
\end{proof}

\begin{theorem}\label{thm:mainpushout}
  Suppose that $\mc C$ is a monoidal $\infty$-category with pullbacks, that $\mc M$ is left-tensored over $\mc C$, and that $\mc M$ has centralizers in $\mc C$.
  
  Suppose $X$ is an $\E_0$-algebra in $\mc C$ and $\E X$ is an enveloping algebra for $X$. For $M$ any left $\E X$-module, the induced natural diagram
  \[
  \begin{tikzcd}
    \E X \otimes X \otimes M \ar[r,"\mu \otimes 1"] \ar[d,swap,"1 \otimes \lambda_M"] &
    \E X \otimes M \ar[d, "m"] \\
    \E X \otimes M \ar[r, "m", swap] &
    M
  \end{tikzcd}
  \]
  is a homotopy pushout diagram in $\LMod_{\E X}(\mc M)$.
\end{theorem}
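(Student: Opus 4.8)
The plan is to transport the statement, via Theorem~\ref{thm:EZequivalence}, into the category $\EZMod_X(\mc M)$ of objects equipped with an $X$-action, where Proposition~\ref{prop:EZpushout} applies directly once a suitable left adjoint is in hand. First I would record that the forgetful functor $\LMod_{\E X}(\mc M) \to \mc M$ admits a left adjoint, namely the free-module functor $P \mapsto \E X \otimes P$ with its evident $\E X$-module structure (see \cite[\S 4.2.4]{LurieHA}). Theorem~\ref{thm:EZequivalence} supplies an equivalence $\LMod_{\E X}(\mc M) \xrightarrow{\sim} \EZMod_X(\mc M)$ lying over $\mc M$, so this adjoint transports to a left adjoint $L$ of the forgetful functor $U\co \EZMod_X(\mc M) \to \mc M$, with $L(P)$ corresponding to the free $\E X$-module $\E X \otimes P$ and the counit $\epsilon\co LUM \to UM$ corresponding to the action map $m\co \E X \otimes M \to M$. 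Since an equivalence of $\infty$-categories both preserves and reflects homotopy pushouts, it suffices to show that the image of the asserted square is a homotopy pushout in $\EZMod_X(\mc M)$.

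Applying Proposition~\ref{prop:EZpushout} to this $L$ produces a natural transformation $\lambda\co L(X \otimes UM) \to LUM$ together with a homotopy pushout square
\[
\begin{tikzcd}
  L(X \otimes UM) \ar[r,"\lambda"] \ar[d,"L(\lambda_M)",swap] &
  LUM \ar[d,"\epsilon"] \\
  LUM \ar[r,"\epsilon",swap] &
  UM
\end{tikzcd}
\]
in $\EZMod_X(\mc M)$. It then remains to match this square term-by-term with the one in the statement, after transporting back along Theorem~\ref{thm:EZequivalence} and identifying $L(P)$ with $\E X \otimes P$: one has $L(X \otimes UM) \simeq \E X \otimes X \otimes M$; the map $L(\lambda_M)$ is the free-module extension of the action $\lambda_M$, hence $1_{\E X} \otimes \lambda_M$; the counit $\epsilon$ is $m$, as above; and, unwinding the construction of $\lambda$ in the proof of Proposition~\ref{prop:EZpushout} --- it is adjoint to $X \otimes UM \to X \otimes L(UM) \to L(UM)$, the second arrow being the $X$-action on the free module $\E X \otimes M$, which is multiplication in $\E X$ precomposed with $X \to \E X$ --- the map $\lambda$ becomes the $\E X$-linear extension of $X \to \E X$, namely $\mu \otimes 1$ in the notation of the statement.

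These last identifications are formal, but they are where the real care is needed: one must trace the coherences through the equivalence of Theorem~\ref{thm:EZequivalence} and through the mapping-space descriptions of Propositions~\ref{prop:EZmaps} and \ref{prop:sectionmapping}, and in particular verify that the $X$-action on a free $\E X$-module is the expected one. Once the two squares are identified, the square in the statement is a homotopy pushout in $\EZMod_X(\mc M)$ and hence, reflecting along the equivalence, a homotopy pushout in $\LMod_{\E X}(\mc M)$, as claimed. I expect this bookkeeping, rather than any conceptual difficulty, to be the principal obstacle.
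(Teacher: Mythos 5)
Your proposal is correct and follows essentially the same route as the paper: the paper's own proof simply invokes the free--forgetful adjunction $\E X \otimes (-) \dashv U$ (via \cite[Corollary 4.2.4.8]{LurieHA}) and then applies Proposition~\ref{prop:EZpushout}, with Theorem~\ref{thm:EZequivalence} implicitly identifying $\LMod_{\E X}(\mc M)$ with $\EZMod_X(\mc M)$. Your version is in fact more explicit than the paper's about transporting the adjoint across the equivalence and matching the four maps of the square, which is exactly the bookkeeping the paper leaves to the reader.
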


\begin{proof}
  There is a natural equivalence
  \[
    \Map_{\LMod_{\E X}(\mc M)}(\E X \otimes Y, N) \simeq \Map_{\mc M}(Y, N)
  \]
  by \cite[Corollary 4.2.4.8]{LurieHA}: the functor $\E X \otimes (-)$ is left adjoint to the forgetful functor. The result then follows by Proposition~\ref{prop:EZpushout}.
\end{proof}

\section{Algebras and operations}\label{sec:Ek}

In this section, we will discuss how the previous results can be applied in categories of $\mc O$-algebras for $\mc O$ an operad. In particular, we will exploit the Dunn additivity theorem in the form for $\infty$-categories proved in \cite[Theorem 5.1.2.2]{LurieHA}, which implies that
\[
  \Alg_{\E_{n+m}}(\mc C) = \Alg_{\E_n} (\Alg_{\E_m}(\mc C)).
\]

\begin{reminder}
  The Boardman--Vogt tensor product of operads \cite{BoardmanVogtRecognition} has a version for $\infty$-operads. Given two $\infty$-operads $\mc P$ and $\mc Q$, there is a tensor product $\mc P \otimes \mc Q$ \cite[Proposition~2.2.5.6]{LurieHA}, whose defining property is that $\mc P \otimes \mc Q$-algebras are the same as $\mc P$-algebras in the category of $\mc Q$-algebras:
  \[
    \Alg_{\mc P}(\Alg_{\mc Q}(\mc C)) \simeq \Alg_{\mc P \otimes \mc Q}(\mc C)
  \]
  In the following we will be considering the case where $\mc P$ is an $\E_1$-operad.
\end{reminder}

\begin{proposition}\label{prop:Opushout}
  Suppose that $\mc O$ is a coherent $\infty$-operad, $\mc C$ is a presentable $\E_1 \otimes \mc O$-monoidal $\infty$-category, $B$ is an $\E_1 \otimes \mc O$-algebra in $\mc C$, $A \to B$ is a map of $\E_0 \otimes \mc O$-algebras in $\mc C$, and that $\E A$ is an enveloping $\E_1 \otimes \mc O$-algebra for $A$. Given $R$ any $\mc O$-algebra in $\LMod_B$, the induced natural diagram
  \[
  \begin{tikzcd}
    B \otimes A \otimes R \ar[r,"m \otimes 1"] \ar[d,swap,"1 \otimes m"] &
    B \otimes R \ar[d, "m"] \\
    B \otimes R \ar[r, "m", swap] &
    B \otimes_{\E A} R
  \end{tikzcd}
  \]
  is a homotopy pushout diagram in the category of $\mc O$-algebras in $\LMod_{B}$.
\end{proposition}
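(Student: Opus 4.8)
The plan is to reduce Proposition~\ref{prop:Opushout} to Theorem~\ref{thm:mainpushout} by base change along $\E A \to B$, using Dunn additivity to interpret ``$\mc O$-algebras in left $B$-modules'' in a form to which Theorem~\ref{thm:mainpushout} applies. First I would set up the ambient category: writing $\mc D = \Alg_{\mc O}(\LMod_B(\mc C))$, I note that since $\mc O$ is coherent and $\mc C$ is presentable $\E_1 \otimes \mc O$-monoidal, the category $\LMod_B(\mc C)$ is presentable $\mc O$-monoidal (the $\E_1$-factor is used to form $\LMod_B$ and the remaining $\mc O$-structure survives), so $\mc D$ is presentable and in particular has the pushouts we need. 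The key move is the equivalence $\Alg_{\E_1 \otimes \mc O} = \Alg_{\E_1}(\Alg_{\mc O})$: a map of $\E_0 \otimes \mc O$-algebras $A \to B$ is an $\E_0$-algebra map in $\Alg_{\mc O}(\mc C)$, and an enveloping $\E_1 \otimes \mc O$-algebra $\E A$ is precisely an enveloping algebra (in the sense of Definition~\ref{def:envelopingalgebra}) for $A$ regarded as an $\E_0$-algebra in the monoidal category $\Alg_{\mc O}(\mc C)$.

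Next I would handle the base change. The general statement involves an arbitrary $\E_1 \otimes \mc O$-algebra $B$ receiving a map from $A$, but the point made in the introduction is that it suffices to treat $B = \E A$. Concretely, $\LMod_B(\mc M)$ for $\mc M = \LMod_{\E A}(\mc C)$ is obtained from $\LMod_{\E A}(\mc C)$ by extension of scalars along $\E A \to B$, and the functor $B \otimes_{\E A} (-)$ is a monoidal left adjoint compatible with the $\mc O$-structure; applying it to the homotopy pushout diagram over $\E A$ produces the homotopy pushout diagram over $B$, since left adjoints preserve homotopy pushouts and $B \otimes_{\E A}(\E A \otimes A \otimes R) \simeq B \otimes A \otimes R$, etc. So the problem reduces to showing that
\[
\begin{tikzcd}
  \E A \otimes A \otimes R \ar[r,"m \otimes 1"] \ar[d,swap,"1 \otimes m"] &
  \E A \otimes R \ar[d, "m"] \\
  \E A \otimes R \ar[r, "m", swap] &
  R
\end{tikzcd}
\]
is a homotopy pushout of $\mc O$-algebras in $\LMod_{\E A}(\mc C)$. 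Here $R$ is an $\mc O$-algebra in $\LMod_{\E A}(\mc C)$; equivalently, via Dunn additivity, $R$ is a left $\E A$-module in $\Alg_{\mc O}(\mc C)$, i.e. an object of $\LMod_{\E A}(\mc M)$ with $\mc M = \Alg_{\mc O}(\mc C)$ left-tensored over $\mc C$ (pointwise on the underlying objects).

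Now I would invoke Theorem~\ref{thm:mainpushout} with $\mc C$ replaced by $\Alg_{\mc O}(\mc C)$ — but one must check its hypotheses. Theorem~\ref{thm:mainpushout} requires that the monoidal category in play has pullbacks and that the module category has centralizers in it. The category $\Alg_{\mc O}(\mc C)$ is presentable (as $\mc O$ is coherent and $\mc C$ presentable), hence has all limits, so pullbacks are fine; and $\mc M = \Alg_{\mc O}(\mc C)$, viewed as left-tensored over itself, has centralizers by the existence of $\E_k$-centralizers in presentable monoidal categories (\cite[\S 5.3.1]{LurieHA}, together with the reduction in Lemma~\ref{lem:centralizerfunctor})—this is the one input that genuinely uses presentability rather than just the formal structure. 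Granting this, Theorem~\ref{thm:mainpushout} applied to $X = A$ (an $\E_0$-algebra in $\Alg_{\mc O}(\mc C)$) and $\E X = \E A$ yields exactly the displayed square as a homotopy pushout in $\LMod_{\E A}(\Alg_{\mc O}(\mc C))$, which under Dunn additivity is $\Alg_{\mc O}(\LMod_{\E A}(\mc C))$. Combined with the base-change step this gives the proposition.

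I expect the main obstacle to be bookkeeping rather than a deep difficulty: matching up the several equivalences of categories (the Dunn additivity identification $\Alg_{\mc O}(\LMod_B(\mc C)) \simeq \LMod_B(\Alg_{\mc O}(\mc C))$, and the identification of the enveloping $\E_1 \otimes \mc O$-algebra with an enveloping $\E_1$-algebra in $\Alg_{\mc O}(\mc C)$) so that the structure maps $m$, $\mu$, $\lambda_M$ in Theorem~\ref{thm:mainpushout} become the maps $m \otimes 1$, $1 \otimes m$, $m$ of the proposition, and verifying that the left-tensoring of $\Alg_{\mc O}(\mc C)$ over $\mc C$ is the right one (induced by the diagonal $\mc C \to \Fun$-type construction, so that $A \otimes R$ and $\E A \otimes R$ really are the tensors appearing in the statement). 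The verification that $\LMod_B(\mc C)$ and $\Alg_{\mc O}(\mc C)$ remain presentable and monoidal in the required way, so that the hypotheses of Theorem~\ref{thm:mainpushout} and of base change are met, is the other point requiring care but is standard from \cite{LurieHA}.
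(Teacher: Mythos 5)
Your proposal is correct and follows essentially the same route as the paper: identify $\E_1\otimes\mc O$-structures with $\E_1$-structures on $\Alg_{\mc O}(\mc C)$, note that this category is monoidal and (by coherence of $\mc O$ and presentability of $\mc C$) has centralizers, apply Theorem~\ref{thm:mainpushout} to get the pushout over $\E A$, and transport it along the colimit-preserving left adjoint $B\otimes_{\E A}(-)$. The only difference is cosmetic ordering — you reduce to $B=\E A$ first, the paper base-changes at the end — and the paper pins the centralizer input to a specific reference (Lurie, Theorem 5.3.1.14) where you cite it more loosely.
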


\begin{proof}
  Let $\mc C$ and $\mc M$ be the category $\Alg_{\mc O}(\mc C)$, which is monoidal \cite[Example 3.2.4.4]{LurieHA} and has centralizers \cite[Theorem 5.3.1.14]{LurieHA}. In this category, $A$ is an $\E_0$-algebra, $\E A$ is an enveloping $\E_1$-algebra for $A$, and $R$ is a left $\E A$-module. The hypotheses of Theorem~\ref{thm:mainpushout} apply, and we get a homotopy pushout diagram
  \[
  \begin{tikzcd}
    \E A \otimes A \otimes R \ar[r,"m \otimes 1"] \ar[d,swap,"1 \otimes m"] &
    \E A \otimes R \ar[d, "m"] \\
    \E A \otimes R \ar[r, "m", swap] &
    R
  \end{tikzcd}
  \]
  of $\mc O$-algebras in $\LMod_{\E A}$.
  
  The forgetful functor $\Alg_{\mc O}(\LMod_B) \to \Alg_{\mc O}(\LMod_{\E A})$ preserves and detects homotopy limits because the composite forgetful functors to $\mc C$ do, so the left adjoint $B \otimes_{\E A} (-)$ preserves homotopy colimits. Applying this to the above homotopy pushout diagram gives the desired result.
\end{proof}

Specializing to the case of an $\E_k$-operad, the Dunn additivity theorem implies the following.
\begin{theorem}\label{thm:Ekpushout}
  Suppose that $\mc C$ is a presentable $\E_{k+1}$-monoidal $\infty$-category, that $B$ is an $\E_{k+1}$-algebra in $\mc C$, $A \to B$ is a map of $\E_k$-algebras in $\mc C$, and that $\E A$ is an enveloping $\E_{k+1}$-algebra for $A$. Given $R$ any $\E_k$-algebra in $\LMod_B$, the induced natural diagram
  \[
  \begin{tikzcd}
    B \otimes A \otimes R \ar[r,"m \otimes 1"] \ar[d,swap,"1 \otimes m"] &
    B \otimes R \ar[d, "m"] \\
    B \otimes R \ar[r, "m", swap] &
    B \otimes_{\E A} R
  \end{tikzcd}
  \]
  is a homotopy pushout diagram in the category of $\E_k$-algebras in $\LMod_{R}$.
\end{theorem}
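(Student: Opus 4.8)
The plan is to obtain this as the special case $\mc O = \E_k$ of Proposition~\ref{prop:Opushout}, so that essentially all of the substantive work has already been done and what remains is a translation of hypotheses and conclusion through Dunn additivity. The first step is to record that $\E_k$ is a coherent $\infty$-operad --- this is the same coherence that guarantees the existence of $\E_k$-centralizers --- so that Proposition~\ref{prop:Opushout} genuinely applies with $\mc O = \E_k$.

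Next I would match up the data. By the Dunn additivity theorem in the form $\Alg_{\E_{n+m}}(\mc C) \simeq \Alg_{\E_n}(\Alg_{\E_m}(\mc C))$, one has $\E_1 \otimes \E_k \simeq \E_{k+1}$; hence a presentable $\E_{k+1}$-monoidal $\infty$-category is the same as a presentable $\E_1 \otimes \E_k$-monoidal one, an $\E_{k+1}$-algebra $B$ is an $\E_1 \otimes \E_k$-algebra, and an enveloping $\E_{k+1}$-algebra $\E A$ for $A$ is an enveloping $\E_1 \otimes \E_k$-algebra in the sense of Definition~\ref{def:envelopingalgebra} taken inside the monoidal $\infty$-category $\Alg_{\E_k}(\mc C)$. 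For the remaining piece, an $\E_0 \otimes \E_k$-algebra is an $\E_0$-algebra in $\Alg_{\E_k}(\mc C)$, that is, an $\E_k$-algebra equipped with a map from the monoidal unit of $\Alg_{\E_k}(\mc C)$; since $\mc C$ is presentable, $\otimes$ preserves colimits, so the initial object of $\mc C$ is absorbed by $\otimes$ and $\Free_{\E_k}(\emptyset) \simeq \unit$ is the initial $\E_k$-algebra. Thus that unit map is essentially unique data, the forgetful functor $\Alg_{\E_0 \otimes \E_k}(\mc C) \to \Alg_{\E_k}(\mc C)$ is an equivalence, and so the map $A \to B$ of $\E_k$-algebras in the statement is precisely a map of $\E_0 \otimes \E_k$-algebras, while an $\E_k$-algebra $R$ in $\LMod_B$ is precisely an $\mc O$-algebra in $\LMod_B$. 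Feeding all of this into Proposition~\ref{prop:Opushout} produces exactly the asserted homotopy pushout square of $\E_k$-algebras in $\LMod_B$, with the relative tensor $B \otimes_{\E A} R$ there agreeing with the one in the statement.

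I expect the only step requiring any care to be the identification $\Alg_{\E_0 \otimes \E_k}(\mc C) \simeq \Alg_{\E_k}(\mc C)$, equivalently the claim that the monoidal unit of $\Alg_{\E_k}(\mc C)$ is initial; this is where presentability of $\mc C$ is genuinely used rather than merely carried along. Everything else is bookkeeping with the Boardman--Vogt tensor product and Dunn additivity, and the geometry of the pushout square is supplied verbatim by Proposition~\ref{prop:Opushout}.
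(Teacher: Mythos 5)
Your proposal is correct and is essentially the paper's own argument: Theorem~\ref{thm:Ekpushout} is deduced from Proposition~\ref{prop:Opushout} with $\mc O = \E_k$ via Dunn additivity, and your explicit identification $\Alg_{\E_0 \otimes \E_k}(\mc C) \simeq \Alg_{\E_k}(\mc C)$ (via the monoidal unit being the initial $\E_k$-algebra, which uses that $\otimes$ preserves colimits) is exactly the bookkeeping the paper leaves implicit. Note only that the conclusion lands in $\E_k$-algebras in $\LMod_B$, as you say and as in Proposition~\ref{prop:Opushout}; the ``$\LMod_R$'' in the theorem statement appears to be a typo.
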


\begin{remark}
  When $R = \unit$ is the monoidal unit and $\mc C$ is symmetric monoidal presentable, this result is an immediate consequence of \cite[{Proposition 5.2.2.12, Corollary 5.3.1.16}]{LurieHA}.
\end{remark}

\section{Special cases}\label{sec:cases}

\subsection{Associative algebras}

Our first examples are when $k=0$.

\begin{proposition}
  Suppose that $\mc C$ is stable presentable symmetric monoidal, and let $\T Y$ be the free associative algebra on $Y$. Then for any left $\T Y$-module $M$, there is a cofiber sequence
  \[
    \T Y \otimes Y \otimes M \to \T Y \otimes M \to M
  \]
  of left $\T Y$-modules.
\end{proposition}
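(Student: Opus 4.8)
The plan is to apply Theorem~\ref{thm:Ekpushout} (equivalently Theorem~\ref{thm:mainpushout}) in the case $k=0$, where $\E_k$-algebras are just objects of $\mc C$ and the enveloping $\E_1$-algebra of an object $Y$ is the tensor algebra $\T Y = \bigoplus_{n\geq 0} Y^{\otimes n}$. First I would verify the hypotheses: since $\mc C$ is stable presentable symmetric monoidal, it is in particular $\E_1$-monoidal, it has pullbacks, and $\mc M = \mc C$ left-tensored over itself has centralizers; moreover for any object $Y$, viewed as an $\E_0$-algebra via a unit map $\unit \to Y \oplus \unit$ (or more precisely as the $\E_0$-algebra $Y$ in the sense used in the paper, where an $\E_0$-algebra is just a unital object), the tensor algebra $\T Y$ is an enveloping $\E_1$-algebra: the universal property $\Map_{\Alg_{\E_1}}(\T Y, A) \simeq \Map_{\mc C}(Y, A)$ is exactly the standard adjunction between free associative algebras and underlying objects.

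Next I would invoke Theorem~\ref{thm:mainpushout} directly with $\mc C = \mc M$, $X = Y$, $\E X = \T Y$, to obtain that for any left $\T Y$-module $M$ the square
\[
\begin{tikzcd}
  \T Y \otimes Y \otimes M \ar[r,"\mu \otimes 1"] \ar[d,swap,"1 \otimes \lambda_M"] &
  \T Y \otimes M \ar[d, "m"] \\
  \T Y \otimes M \ar[r, "m", swap] &
  M
\end{tikzcd}
\]
is a homotopy pushout in $\LMod_{\T Y}(\mc C)$. The final step is to translate a homotopy pushout square into a cofiber sequence: since $\mc C$ is stable, so is $\LMod_{\T Y}(\mc C)$, and in a stable $\infty$-category a homotopy pushout square is the same as a homotopy pullback square, so the total fiber vanishes. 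Collapsing one of the two identical legs $\T Y \otimes M \to M$ (both are the module action map $m$), the pushout square exhibits $M$ as the cofiber of the difference map $\T Y \otimes Y \otimes M \to \T Y \otimes M$, i.e.\ gives the desired cofiber sequence
\[
  \T Y \otimes Y \otimes M \to \T Y \otimes M \to M
\]
of left $\T Y$-modules, where the first map is $m \circ (\mu \otimes 1) - m \circ (1 \otimes \lambda_M)$.

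I do not expect a serious obstacle here; the only points requiring a word of care are (i) confirming that the tensor algebra genuinely satisfies the enveloping-algebra universal property of Definition~\ref{def:envelopingalgebra} in a presentable stable symmetric monoidal setting, which is standard, and (ii) the passage from a pushout square with two coinciding legs to a two-term cofiber sequence, which uses only stability of $\LMod_{\T Y}(\mc C)$ together with the fact that, for a square all of whose lower-left and lower-right terms agree and whose two maps to $M$ agree, the pushout is computed by the cofiber of the induced map from the upper-left term to the common term $\T Y \otimes M$. Both are routine, so the proof is essentially a citation of Theorem~\ref{thm:mainpushout} specialized to $k=0$, followed by the stable reinterpretation of the pushout.
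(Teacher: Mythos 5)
There is a genuine gap. Theorem~\ref{thm:mainpushout} (equivalently Theorem~\ref{thm:Ekpushout} with $k=0$) requires $X$ to be an $\E_0$-algebra, i.e.\ an object equipped with a unit map $\unit \to X$, and the pushout square it produces has $\E X \otimes X \otimes M$ in the upper-left corner. A bare object $Y$ is not an $\E_0$-algebra; the correct input is the free $\E_0$-algebra $X = \unit \oplus Y$, whose enveloping algebra is $\T Y$ (this is how the standard adjunction $\Map_{\Alg_{\E_1}}(\T Y, A) \simeq \Map_{\mc C}(Y,A)$ matches Definition~\ref{def:envelopingalgebra}). The theorem therefore yields a pushout square whose upper-left corner is $\T Y \otimes (\unit \oplus Y) \otimes M$, not $\T Y \otimes Y \otimes M$. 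The square you display is in fact \emph{not} a homotopy pushout: already for $Y = 0$ it would assert that $M$ is the pushout of $M \from 0 \to M$ in $\LMod_{\unit}(\mc C) \simeq \mc C$, i.e.\ that $M \simeq M \oplus M$. The same example shows that your step (ii) --- that a pushout square whose two legs coincide is computed by the cofiber of the single map into the common term --- is false in general: in a stable category the pushout of $B \xleftarrow{g} A \xrightarrow{f} B$ is the cofiber of $(f,-g)\co A \to B \oplus B$, which differs from the cofiber of $f-g$ by a summand of $B$.

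The missing step, which is exactly what the paper supplies, is to decompose the true upper-left corner as $(\T Y \otimes M) \oplus (\T Y \otimes Y \otimes M)$ and observe that on the first summand both maps of the span are equivalences onto $\T Y \otimes M$; cancelling this summand against one copy of $\T Y \otimes M$ converts the genuine pushout square into the two-term cofiber sequence $\T Y \otimes Y \otimes M \to \T Y \otimes M \to M$, with first map the difference of the two action maps, as you state at the end. So your conclusion is correct and your overall strategy (specialize the main theorem to $k=0$, then use stability) is the paper's, but the intermediate ``pushout square'' must be replaced by the one the theorem actually provides, followed by this splitting-and-cancellation argument.
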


For $V$ a flat module over a commutative ring $k$ and $N$ a left module over the tensor algebra $\T(V)$ whose underlying $k$-module is flat, this generalizes the standard Koszul resolution
\[
  0 \to \T(V) \otimes_k V \otimes_k N \to \T(V) \otimes_k N \to N \to 0.
\]

\begin{proof}
  Let $X$ be $\unit \oplus Y$, the free $\E_0$-algebra on $Y$, so that $\E X \simeq \T Y$. For any left $\T Y$-module $M$, Theorem~\ref{thm:Ekpushout} implies that there is a homotopy pushout diagram
  \[
    \begin{tikzcd}
      \T Y \otimes (\unit \oplus Y) \otimes M \ar[r] \ar[d] &
      \T Y \otimes M \ar[d] \\
      \T Y \otimes M \ar[r] &
      M
    \end{tikzcd}
  \]
  of left $\T Y$-modules. By observing that the upper-left decomposes into a sum $(\T Y \otimes M) \oplus (\T Y \otimes Y \otimes M)$, with first factor mapping by an equivalence to both corners, we find that we can re-express as the desired cofiber sequence.
\end{proof}

\subsection{Commutative algebras}

We note that when $k = \infty$, Theorem~\ref{thm:Ekpushout} recovers a known description of  homotopy pushouts of $\E_\infty$ rings.

\begin{proposition}
  Suppose that $\mc C$ is symmetric monoidal presentable. For a diagram $B \from A \to R$ of $\E_\infty$-algebras in $\mc C$, the homotopy pushout is equivalent to $B \otimes_A R$.
\end{proposition}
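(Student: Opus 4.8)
The plan is to deduce this from the general theorem, Theorem~\ref{thm:Ekpushout}, specialized to the case $k = \infty$. The key observation is that $\E_\infty = \E_{\infty+1}$ in the relevant sense: an $\E_\infty$-monoidal $\infty$-category is in particular $\E_{k+1}$-monoidal for every $k$, and more to the point, the colimit of the $\E_k$-operads has the property that $\E_\infty$-algebras are $\E_\infty$-algebras in $\E_\infty$-algebras. So I would first set $B = A$ (mapping to itself by the identity), viewed as a commutative algebra, and take $R$ to be any $\E_\infty$-algebra in $\LMod_A$ — equivalently, via the symmetric monoidal structure on $\LMod_A$, any commutative $A$-algebra. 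For an enveloping $\E_\infty$-algebra $\E A$ of the $\E_\infty$-algebra $A$, the universal property forces $\E A \simeq A$, since $A$ is already an $\E_\infty$-algebra and the forgetful map from $\E_\infty$-algebras to $\E_0$-algebras under $A$ is already an equivalence onto its image containing $\id_A$. (Concretely: a free commutative algebra on a commutative algebra is the algebra itself.)

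With $\E A \simeq A$, Theorem~\ref{thm:Ekpushout} (applied with $k = \infty$, $B = A$) produces a homotopy pushout square
\[
\begin{tikzcd}
  A \otimes A \otimes R \ar[r] \ar[d] & A \otimes R \ar[d] \\
  A \otimes R \ar[r] & A \otimes_{A} R
\end{tikzcd}
\]
of $\E_\infty$-algebras in $\LMod_A$, and the lower-right corner $A \otimes_A R$ is simply $R$. Now I would invoke the general fact that in the symmetric monoidal category $\LMod_A$ — equivalently, working with commutative $A$-algebras — the relative tensor product $B' \otimes_A R$ computes the pushout; but since we only want to identify the pushout of the original diagram $B \from A \to R$ of $\E_\infty$-algebras in $\mc C$, I would instead argue directly: the pushout of $B \from A \to R$ in $\Alg_{\E_\infty}(\mc C)$ can be rewritten. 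Using that $\Alg_{\E_\infty}(\mc C) \simeq \Alg_{\E_\infty}(\LMod_A)_{A//}$... more cleanly, base-change along $A \to B$ and reduce to the diagram $B \from A \to R$ where one leg is $\id_A$, i.e.\ to computing $A \otimes_A R \simeq R$, which is trivially the pushout of $A \from A \to R$.

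The cleanest route, and the one I would actually write, is: the pushout of $B \from A \to R$ in $\E_\infty$-algebras is computed by first forming $B \otimes_A R$ as the relative smash, and to justify this I would note that $R \mapsto B \otimes_A R$ is left adjoint to the forgetful functor $\Alg_{\E_\infty}(\LMod_B) \to \Alg_{\E_\infty}(\LMod_A)$ (by \cite[Corollary~4.2.4.8]{LurieHA} and the symmetric monoidality of relative tensor), hence preserves colimits and in particular initial objects; applying it to the initial commutative $A$-algebra relation identifying $R$ with the pushout $A \aquot(\text{relations})$... Rather than belabor this, the honest statement is that this is well-known (e.g.\ \cite[\S7.1]{LurieHA}), and the contribution here is merely to see it drop out of Theorem~\ref{thm:Ekpushout}. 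The main obstacle is purely bookkeeping: verifying that ``enveloping $\E_\infty$-algebra of an $\E_\infty$-algebra'' is the identity and that the $k=\infty$ case of the theorem is legitimately covered by the operadic formulation in Proposition~\ref{prop:Opushout} (taking $\mc O$ to be an $\E_\infty$-operad, so $\E_1 \otimes \mc O \simeq \mc O$). Once that is in place, the identification $A \otimes_A R \simeq R$ and the collapse of the pushout square are immediate, and no genuine computation remains.
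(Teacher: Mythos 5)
Your proposal correctly isolates the two preliminary facts the paper also uses --- that for an $\E_\infty$-algebra $A$ the enveloping algebra satisfies $\E A \simeq A$, and that the $k=\infty$ case of Theorem~\ref{thm:Ekpushout} is legitimately covered by the operadic formulation --- but it never completes the deduction, and the place where it stalls is exactly where the content lives. Specializing the theorem to $B = A$ only produces a square with lower-right corner $A \otimes_A R \simeq R$, i.e.\ the statement that $R$ is the pushout of $A \from A \to R$, which is a triviality and says nothing about a general $B$. The subsequent move --- ``base-change along $A \to B$ and reduce to the case where one leg is the identity'' --- presupposes that you already know how pushouts of commutative algebras interact with $B \otimes_A (-)$, which is essentially the statement being proved; and the closing appeal to ``this is well-known'' concedes that the key step is being omitted rather than proved.

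What is actually needed (and what the paper does) is to apply Theorem~\ref{thm:Ekpushout} with the given $B$, obtaining a homotopy pushout square of $\E_\infty$-algebras in $\LMod_B$ with corners $B \otimes A \otimes R$, $B \otimes R$ (twice), and $B \otimes_A R$. Since $\E_\infty$-algebras in $\LMod_B$ are the same as $\E_\infty$-algebras under $B$, and the coproduct of commutative algebras is the tensor product, this square exhibits $B \otimes_A R$ as the pushout of $B \amalg R \from B \amalg A \amalg R \to B \amalg R$, where the two legs fold the $A$ factor into $B$ and into $R$ respectively. The final observation --- purely formal, valid in any $\infty$-category with coproducts --- is that this pushout agrees with the pushout of the original span $B \from A \to R$. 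That last step is the one genuinely missing from your write-up; the rest of what you say is correct but does not substitute for it.
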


\begin{proof}
  When $k=\infty$, the enveloping algebra $\E A$ is always equivalent to $A$, $\E_\infty$-algebras in left $B$-modules are the same as $\E_\infty$-algebras with a map from $B$, and the coproduct is the tensor product. Therefore, this reduces us to observing that the homotopy pushout of $B \from A \to R$ is always equivalent to the homotopy pushout of $B \amalg R \from B \amalg A \amalg R \to B \amalg R$.
\end{proof}

\subsection{Augmented cases}

The most straightforward general situation, which eliminates worries about left module structures, is when $B$ is the monoidal unit $\unit$.

\begin{corollary}
  Suppose that $\mc C$ is a presentable $\E_{k+1}$-monoidal $\infty$-category with unit $\unit$, $\epsilon\co A \to \unit$ is an augmented $\E_k$-algebra in $\mc C$, and that $\E A$ is an enveloping $\E_{k+1}$-algebra for $A$. Given $R$ any $\E_k$-algebra in $\mc C$, the induced natural diagram
  \[
  \begin{tikzcd}
    A \otimes R \ar[r,"\epsilon \otimes 1"] \ar[d,swap,"m"] &
    R \ar[d] \\
    R \ar[r] &
    \unit \otimes_{\E A} R
  \end{tikzcd}
  \]
  is a homotopy pushout diagram in the category of $\E_k$-algebras in $\mc C$.
\end{corollary}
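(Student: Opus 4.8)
The plan is to deduce this Corollary directly from Theorem~\ref{thm:Ekpushout} by specializing $B = \unit$, the monoidal unit. First I would observe that for $B = \unit$, the category $\LMod_B(\mc C)$ of left $B$-modules is canonically equivalent to $\mc C$ itself (left modules over the unit algebra are just objects, since $\unit \otimes M \simeq M$ naturally), so that $\E_k$-algebras in $\LMod_\unit$ are simply $\E_k$-algebras in $\mc C$. Under this identification, a map $A \to B = \unit$ of $\E_k$-algebras is precisely an augmentation $\epsilon\co A \to \unit$, so the hypotheses of Theorem~\ref{thm:Ekpushout} are met. I would then just transcribe the pushout square of that theorem: the upper-left corner $B \otimes A \otimes R$ becomes $\unit \otimes A \otimes R \simeq A \otimes R$, the two remaining off-diagonal corners $B \otimes R$ become $\unit \otimes R \simeq R$, and the bottom-right corner $B \otimes_{\E A} R$ becomes $\unit \otimes_{\E A} R$.

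Next I would identify the maps in the square with those claimed in the Corollary. The map $m \otimes 1 \co B \otimes A \otimes R \to B \otimes R$, after simplifying $B = \unit$, is the map $A \otimes R \to R$ induced by the multiplication $B \otimes A \to B$; but multiplication by the unit against $A$ followed by the algebra structure map is exactly the augmentation $\epsilon$ tensored with $R$, i.e.\ $\epsilon \otimes 1$. The map $1 \otimes m \co B \otimes A \otimes R \to B \otimes R$ simplifies to the left action map $A \otimes R \to R$ coming from the $A$-module structure on $R$ (obtained by restricting the $\E A$-module structure along $A \to \E A$), which is the map labeled $m$ in the Corollary's diagram. The two maps $B \otimes R \to B \otimes_{\E A} R$ both become the canonical map $R \to \unit \otimes_{\E A} R$. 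Thus the square of Theorem~\ref{thm:Ekpushout} is, up to the natural equivalences $\unit \otimes (-) \simeq \id$, exactly the square in the statement.

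The one point requiring a little care — and the closest thing to an obstacle, though it is genuinely routine — is checking that the identification $\LMod_\unit(\mc C) \simeq \mc C$ is compatible with the monoidal and $\E_k$-structures, so that "homotopy pushout of $\E_k$-algebras in $\LMod_\unit$" really does translate to "homotopy pushout of $\E_k$-algebras in $\mc C$." This follows because the forgetful functor $\LMod_\unit(\mc C) \to \mc C$ is a monoidal equivalence (its inverse is the free-module functor $\unit \otimes (-)$, which here is naturally the identity), hence induces an equivalence on $\E_k$-algebra objects that preserves and reflects all colimits. Once this is in place, the Corollary is an immediate consequence of Theorem~\ref{thm:Ekpushout}, with no further computation needed.
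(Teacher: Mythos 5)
Your proof is correct and is exactly the intended argument: the paper states this corollary without proof, as the immediate specialization $B = \unit$ of Theorem~\ref{thm:Ekpushout} (the surrounding text even flags it as the case that ``eliminates worries about left module structures''), using the monoidal equivalence $\LMod_{\unit}(\mc C) \simeq \mc C$. Your identification of the maps --- $m \otimes 1$ becoming $\epsilon \otimes 1$ and $1 \otimes m$ becoming the $A$-action on $R$ through the augmentation --- matches the statement's labeling, so nothing further is needed.
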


When $R$ is also the monoidal unit, this recovers the following description relating ``suspension'' to bar constructions on enveloping algebras.

\begin{corollary}\label{cor:barsuspension}
  Suppose that $\mc C$ is a presentable $\E_{k+1}$-monoidal $\infty$-category with unit $\unit$, $\epsilon\co A \to \unit$ is an augmented $\E_k$-algebra in $\mc C$, and that $\E A$ is an enveloping $\E_{k+1}$-algebra for $A$. Then the diagram
  \[
  \begin{tikzcd}
    A \ar[r,"\epsilon \otimes 1"] \ar[d,swap,"m"] &
    \unit \ar[d] \\
    \unit \ar[r] &
    \unit \otimes_{\E A} \unit
  \end{tikzcd}
  \]
  is a homotopy pushout diagram in the category of $\E_k$-algebras in $\mc C$.
\end{corollary}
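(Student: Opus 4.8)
The plan is to derive Corollary~\ref{cor:barsuspension} as the special case $R = \unit$ of the preceding corollary (the one handling the augmented case with $B = \unit$). First I would observe that when $R = \unit$, the entire diagram of the previous corollary simplifies on the nose: $A \otimes R = A \otimes \unit \simeq A$, $B \otimes R = \unit \otimes \unit \simeq \unit$, and the two parallel maps $A \otimes R \rightrightarrows R$ become the multiplication $m \co A \to \unit$ (using the canonical equivalence $A \otimes \unit \simeq A$ and the action of $A$ on itself, which after collapsing $R$ is just the augmentation composed appropriately) and the augmentation-induced map $\epsilon \otimes 1 \co A \to \unit$. The upshot is that the square of the previous corollary becomes precisely the square in the statement, with lower-right corner $\unit \otimes_{\E A} \unit$.

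The key steps in order: (1) invoke the augmented-case corollary with $R = \unit$; (2) identify each corner of its pushout square with the corresponding corner here, using the unit equivalences $A \otimes \unit \simeq A$ and $\unit \otimes \unit \simeq \unit$ in the $\E_{k+1}$-monoidal structure; (3) check that the two maps $A \to \unit$ appearing in the statement agree up to coherent homotopy with the maps $\epsilon \otimes 1$ and $m$ of the previous corollary after these identifications — here "$m$" is the map induced by the left action of $A$ on itself, which under $A \otimes \unit \simeq A$ is literally the multiplication $A \otimes \unit \to \unit$ coming from regarding $\unit$ as an $\E A$-module via $\E A \to \unit$, i.e. again the augmentation; (4) conclude that since homotopy pushouts are invariant under equivalences of diagrams, the square here is a homotopy pushout in $\E_k$-algebras in $\mc C$.

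The only mild subtlety — and the step I would be most careful about — is confirming in step (3) that the two apparently-different maps $A \to \unit$ in the statement are not accidentally identified in a way that trivializes the pushout, and conversely that they really do match the labels "$\epsilon \otimes 1$" and "$m$" from the source corollary after collapsing $R$. Concretely: the map labelled $m$ is the algebra multiplication $A \otimes R \to R$ followed by nothing, and with $R = \unit$ this is the action of $A$ on the unit $\E A$-module, which factors through the augmentation $\E A \to \unit$ of the enveloping algebra (itself induced by $\epsilon\co A \to \unit$); the map labelled $\epsilon \otimes 1$ is also essentially $\epsilon$. These two maps genuinely can coincide up to homotopy, and that is fine — the pushout of $\unit \xleftarrow{\epsilon} A \xrightarrow{\epsilon} \unit$ in $\E_k$-algebras is exactly what computes $\unit \otimes_{\E A} \unit$, since this relative tensor is by definition (or by the two-sided bar construction) the homotopy pushout of that span. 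So the statement follows with no further work once the identifications of corners and maps are pinned down, which is a routine unwinding of the unit coherences in an $\E_{k+1}$-monoidal $\infty$-category.
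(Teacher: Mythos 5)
Your proposal is correct and is exactly how the paper obtains this corollary: the paper states it without a separate proof as the specialization $R=\unit$ of the immediately preceding corollary, with the corners identified via the unit equivalences and both maps $A\to\unit$ reducing to the augmentation. Your care in step (3) about the two maps possibly coinciding up to homotopy is well placed and does not affect the conclusion, since the pushout of the span $\unit \from A \to \unit$ is still the nontrivial ``suspension'' of $A$ in $\E_k$-algebras.
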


Specializing again to the case where $A$ is a free algebra, we get the following result.
\begin{corollary}
  Suppose that $W$ is augmented over the unit $\unit$. Then there exists an equivalence
  \[
    \Free_{\E_k}(\hocolim(\unit \from W \to \unit)) \simeq \twobar(\unit, \Free_{\E_{k+1}}(W), \unit).
  \]
\end{corollary}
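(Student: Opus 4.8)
The plan is to deduce this corollary from Corollary~\ref{cor:barsuspension} by specializing to the case where the augmented $\E_k$-algebra $A$ is itself a free $\E_k$-algebra. First I would recall that, by Dunn additivity, an $\E_{k+1}$-algebra is the same as an $\E_1$-algebra in $\Alg_{\E_k}(\mc C)$, and under this identification the enveloping $\E_{k+1}$-algebra $\E A$ of an $\E_k$-algebra $A$ agrees with the enveloping $\E_1$-algebra in the sense of Definition~\ref{def:envelopingalgebra}, taken inside the monoidal category $\Alg_{\E_k}(\mc C)$. The universal property of the free functor $\Free_{\E_{k+1}}$ then identifies $\E(\Free_{\E_k}(W))$ with $\Free_{\E_{k+1}}(W)$: an $\E_{k+1}$-algebra map out of either one is, by the respective universal properties, the same as a map of underlying objects out of $W$. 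So I would take $A = \Free_{\E_k}(W)$, which is augmented over $\unit$ via the augmentation $W \to \unit$.

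Next I would feed this into Corollary~\ref{cor:barsuspension}, which gives a homotopy pushout square of $\E_k$-algebras
\[
  \begin{tikzcd}
    \Free_{\E_k}(W) \ar[r] \ar[d] & \unit \ar[d] \\
    \unit \ar[r] & \unit \otimes_{\Free_{\E_{k+1}}(W)} \unit.
  \end{tikzcd}
\]
The right-hand side is exactly $\twobar(\unit, \Free_{\E_{k+1}}(W), \unit)$ once we note that the relative tensor $\unit \otimes_{\E A} \unit$ over an $\E_1$-algebra is computed by the two-sided bar construction (this is the standard identification of a relative tensor product as a geometric realization of a bar complex, valid in any presentable monoidal setting). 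It remains to identify the homotopy pushout on the left. Since $\Free_{\E_k}$ is a left adjoint, it preserves homotopy pushouts, so the homotopy pushout of $\unit \from \Free_{\E_k}(W) \to \unit$ in $\Alg_{\E_k}(\mc C)$ is $\Free_{\E_k}$ applied to the homotopy pushout of $\unit \from W \to \unit$ in $\mc C$ — provided we check that both augmentations $\Free_{\E_k}(W) \to \unit$ are the images under $\Free_{\E_k}$ of the two maps $W \to \unit$ appearing in the span $\unit \from W \to \unit$. In the corollary as stated there is a single augmentation $\epsilon$, so strictly the pushout square has $\Free_{\E_k}(W)$ mapping to $\unit$ by $\Free_{\E_k}(\epsilon)$ on both legs; then both legs of the span $\unit \from W \to \unit$ are the augmentation $W \to \unit$, and $\hocolim(\unit \from W \to \unit)$ with this span is the suspension-type object obtained by collapsing $W$ to $\unit$ from both sides. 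Assembling these identifications yields the claimed equivalence
\[
  \Free_{\E_k}(\hocolim(\unit \from W \to \unit)) \simeq \twobar(\unit, \Free_{\E_{k+1}}(W), \unit).
\]

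The main obstacle I anticipate is the bookkeeping around the enveloping algebra: verifying carefully that $\E(\Free_{\E_k}(W)) \simeq \Free_{\E_{k+1}}(W)$ as $\E_1$-algebras in $\Alg_{\E_k}(\mc C)$, compatibly with the map from $W$, so that the relative tensor in Corollary~\ref{cor:barsuspension} really is a bar construction on the free $\E_{k+1}$-algebra. This is a matter of chasing universal properties through the Dunn additivity equivalence, and while not deep it requires care that the two free constructions and the forgetful functors line up on the nose. The second, more routine, point to pin down is that $\unit \otimes_{\E A} \unit$ is modeled by $\twobar(\unit, \E A, \unit)$; this is a standard fact about relative tensor products over $\E_1$-algebras in presentable monoidal $\infty$-categories, so I would simply cite it rather than reprove it.
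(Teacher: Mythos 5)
Your overall route is exactly the paper's: the entire proof given there is the single observation that the universal properties identify $\E(\Free_{\E_k}(W))$ with $\Free_{\E_{k+1}}(W)$, after which one specializes Corollary~\ref{cor:barsuspension} to $A = \Free_{\E_k}(W)$ and reads off the two-sided bar construction. Your handling of the enveloping algebra via Dunn additivity, and of $\unit \otimes_{\E A} \unit \simeq \twobar(\unit, \E A, \unit)$, matches the intended argument.

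The one step that does not work as you wrote it is the identification of the left-hand pushout. The augmentation $\Free_{\E_k}(W) \to \unit$ is the algebra map \emph{adjoint} to $\epsilon$, not $\Free_{\E_k}(\epsilon)$, whose codomain is $\Free_{\E_k}(\unit) \not\simeq \unit$. So the span $\unit \from \Free_{\E_k}(W) \to \unit$ in $\Alg_{\E_k}(\mc C)$ is not the image under $\Free_{\E_k}$ of the span $\unit \from W \to \unit$ in $\mc C$, and ``left adjoints preserve pushouts'' does not apply directly; the compatibility check you flag as needed would in fact fail. What is true is that, since $\unit$ is initial in $\Alg_{\E_k}(\mc C)$, the pushout $\unit \amalg_{\Free_{\E_k}(W)} \unit$ corepresents $T \mapsto \Omega_{\epsilon_T}\Map_{\mc C}(W,T)$ (loops at the trivial map $W \to \unit \to T$), whereas $\Free_{\E_k}(\unit \amalg_W \unit)$ corepresents $T \mapsto \Map_{\mc C}(\unit,T) \times_{\Map_{\mc C}(W,T)} \Map_{\mc C}(\unit,T)$. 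These agree when $\unit$ is a zero object of $\mc C$ --- the augmented/nonunital setting of Basterra--Mandell in which the statement is implicitly placed --- but not in an arbitrary presentable $\E_{k+1}$-monoidal category (take $W = 0$ in spectra). The paper's one-sentence proof elides this point as well, so your proposal is no worse off; but since you made the identification explicit, this is where the actual content, and the implicit hypothesis on $\unit$, lives.
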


\begin{proof}
  The universal property of a free algebra identifies $\Free_{\E_{k+1}}(W)$ with the enveloping algebra $\E(\Free_{\E_k}(W))$.
\end{proof}

A result like this plays an important role in Basterra and Mandell's determination of homology for $\E_n$-algebras via an iterated bar construction \cite[Lemma~7.5]{BasterraMandellEn}, and a generalization of this appears as \cite[Corollary~5.2.2.13]{LurieHA}.

Another situation of particular interest to us, due to its connection to cell attachment, is when $R$ is the monoidal unit $\unit$ and the algebra $A$ is a free algebra.

\begin{proposition}\label{prop:freeEkpushout}
  Suppose that $\mc C$ is a presentable $\E_{k+1}$ monoidal $\infty$-category, that $B$ is an $\E_{k+1}$-algebra in $\mc C$, and that there is a diagram $\unit \xleftarrow{\epsilon} W \to B$ in $\mc C$. Then there is a homotopy pushout diagram
  \[
  \begin{tikzcd}
    B \otimes \Free_{\E_k}(W) \ar[r, "m"] \ar[d,"1 \otimes \bar \epsilon", swap] &
    B \ar[d] \\
    B \ar[r] &
    B \otimes_{\Free_{\E_{k+1}}(W)} \unit
  \end{tikzcd}
  \]
  of $\E_k$-algebras in $\LMod_B$.
\end{proposition}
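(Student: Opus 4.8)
The plan is to deduce Proposition~\ref{prop:freeEkpushout} directly from Theorem~\ref{thm:Ekpushout} by making the correct identifications of $A$, $R$, and $\E A$. First I would take $A = \Free_{\E_k}(W)$, the free $\E_k$-algebra on $W$ in $\mc C$. The map $\epsilon\co W \to \unit$ induces an augmentation $\bar\epsilon\co \Free_{\E_k}(W) \to \unit$, and composing with the unit $\unit \to B$ gives a map of $\E_k$-algebras $A \to B$, supplying the hypothesis of Theorem~\ref{thm:Ekpushout}. For the enveloping $\E_{k+1}$-algebra, the universal property of free algebras identifies $\E A = \E(\Free_{\E_k}(W))$ with $\Free_{\E_{k+1}}(W)$, exactly as used in the proof of Corollary~\ref{cor:barsuspension}: a map of $\E_{k+1}$-algebras out of $\E(\Free_{\E_k}(W))$ is an $\E_0$-map out of $\Free_{\E_k}(W)$, which is a map $W \to (-)$ in $\mc C$, which is an $\E_{k+1}$-map out of $\Free_{\E_{k+1}}(W)$. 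Finally I would take $R = \unit$, regarded as an $\E_k$-algebra (indeed $\E_{k+1}$-algebra) in $\LMod_B$ via the unit $B$-module structure on $B$ itself; here $\unit$ is shorthand for the unit of $\LMod_B$, which is $B$.

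Next I would substitute these into the square of Theorem~\ref{thm:Ekpushout}. The top-left corner $B \otimes A \otimes R$ becomes $B \otimes \Free_{\E_k}(W) \otimes \unit \simeq B \otimes \Free_{\E_k}(W)$, where I should note that here the outermost $\otimes$ is the monoidal product of $\mc C$ applied to the underlying objects, matching the convention in the earlier statements. The corners $B \otimes R$ become $B \otimes \unit \simeq B$. The horizontal map $m\otimes 1$ is the multiplication $B \otimes \Free_{\E_k}(W) \to B$ coming from the algebra map $A \to B$; the vertical map $1 \otimes m$ becomes $1 \otimes \bar\epsilon\co B \otimes \Free_{\E_k}(W) \to B \otimes \unit \simeq B$, using the augmentation on $A$. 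The bottom-right corner is $B \otimes_{\E A} R = B \otimes_{\Free_{\E_{k+1}}(W)} \unit$. This is precisely the square claimed in Proposition~\ref{prop:freeEkpushout}, so the pushout statement in the category of $\E_k$-algebras in $\LMod_B$ follows at once from Theorem~\ref{thm:Ekpushout}.

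The only genuine content beyond bookkeeping is checking that the maps labelled $m$ and $1\otimes\bar\epsilon$ in the proposition really are the images of the maps $m\otimes 1$ and $1\otimes m$ under these identifications, i.e. that tracing through the adjunction and the free-algebra universal property does not introduce a discrepancy. I expect this to be the main (and only mild) obstacle: one must confirm that the natural multiplication map $\E A \otimes R \to R$ of Theorem~\ref{thm:mainpushout}/\ref{thm:Ekpushout}, after base change along $\E A \to B$ and the substitution $R = \unit$, is the canonical map $B \to B$, and that the map induced by the augmentation $A \to \unit$ agrees with $1\otimes\bar\epsilon$ up to the coherences built into the enveloping-algebra and free-algebra universal properties. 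All of this is formal, using the defining adjunctions of $\Free_{\E_k}$, $\Free_{\E_{k+1}}$, and the base-change functor $B \otimes_{\E A}(-)$, together with the identification $\E(\Free_{\E_k}(W)) \simeq \Free_{\E_{k+1}}(W)$ already invoked in this section.
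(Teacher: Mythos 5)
Your overall strategy---specialize Theorem~\ref{thm:Ekpushout} with $A = \Free_{\E_k}(W)$ and $\E A \simeq \Free_{\E_{k+1}}(W)$---is the intended one, but you have wired the two pieces of input data to the wrong slots, and a symptom of this is that your argument never uses the given map $W \to B$ at all. You declare the structure map $A \to B$ to be the composite $\Free_{\E_k}(W) \xrightarrow{\bar\epsilon} \unit \to B$, and you take $R$ to be the unit of $\LMod_B$, i.e.\ $B$ itself. With those choices the theorem's two legs $m \otimes 1$ (multiplication via $A \to B$) and $1 \otimes m$ (action of $A$ on $R$ through $A \to B$) are \emph{both} induced by the augmentation, so the square has two equal legs; and the bottom-right corner is $B \otimes_{\E A} B$ with both module structures pulled back along $\E A \to \unit \to B$, which is $B \otimes \bigl(\unit \otimes_{\E A} \unit\bigr) \otimes B$ rather than $B \otimes_{\Free_{\E_{k+1}}(W)} \unit$. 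Your simplification ``$B \otimes R = B \otimes \unit \simeq B$'' also silently switches between ``$\unit$ means the unit of $\LMod_B$, namely $B$'' and ``$\unit$ means the unit of $\mc C$'': with $R = B$, the free $B$-module on its underlying object is $B \otimes B$, not $B$.

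The identifications must go the other way. The map $A \to B$ has to be the $\E_k$-algebra map adjoint to the given $W \to B$; this is what produces the multiplication $m\co B \otimes \Free_{\E_k}(W) \to B$ and the right $\Free_{\E_{k+1}}(W)$-module structure on $B$ appearing in the relative tensor. The object $R$ has to be the monoidal unit $\unit$ of $\mc C$, made into a left $\E A$-module via the augmentation $\Free_{\E_{k+1}}(W) \to \unit$ induced by $\epsilon$; this produces the leg $1 \otimes \bar\epsilon$ and the left module structure on $\unit$. Note that this $R$ is \emph{not} obtained by restricting a $B$-module structure along $\E A \to B$, so one cannot quote the statement of Theorem~\ref{thm:Ekpushout} verbatim; instead one runs the proof of Proposition~\ref{prop:Opushout} directly: apply Theorem~\ref{thm:mainpushout} with $X = A$ and $M = \unit$ to obtain the pushout square with corners $\E A \otimes A$, $\E A$, $\E A$, $\unit$ in $\E_k$-algebras in $\LMod_{\E A}$, and then apply the colimit-preserving base change $B \otimes_{\E A}(-)$ along the map $\E A \to B$ determined by $W \to B$. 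This yields exactly the asserted square, with the two legs genuinely different.
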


The following result can be interpreted as showing that, if $B$ is \emph{central} in $R$, any elements in $B$ that become trivial in $R$ also have their $\E_{k+1}$-operations become trivial, even if those operations are not defined on $R$.
\begin{proposition}\label{prop:EKkilling}
  Suppose that $\mc C$ is a presentable $\E_{k+1}$-monoidal $\infty$-category, $W$ is an object of $\mc C$, $R$ is an $\E_k$-algebra in $\mc C$, and that there is a map of $\E_k$-algebras $B \to R$ that lifts to a map of $\E_{k+1}$-algebras from $B$ to the center $\mf Z(R)$. If there is a commutative diagram
  \[
  \begin{tikzcd}
    W \ar[r,"f"] \ar[d,"\epsilon", swap] & B \ar[d] \\
    \unit \ar[r] & R,
  \end{tikzcd}
  \]
  in $\mc C$, then it extends to a commutative diagram
  \[
  \begin{tikzcd}
    \Free_{\E_{k+1}}(W) \ar[r,"\tilde f"] \ar[d,swap,"\tilde \epsilon"] &
    B \ar[d] \\
    \unit \ar[r] & R.
  \end{tikzcd}
  \]
\end{proposition}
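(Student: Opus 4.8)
The plan is to deduce this from Corollary~\ref{cor:barsuspension} together with the equivalence between left $\E X$-modules and objects with an $X$-action, by working in the category $\mc M = \LMod_{\mf Z(R)}(\mc C)$ and exploiting the hypothesis that $B$ maps to $\mf Z(R)$ as an $\E_{k+1}$-algebra. First I would observe that the given commutative square, asking that $W \to B$ composed with $B \to R$ is trivialized via $W \to \unit \to R$, is precisely the data of a map from the homotopy pushout $\unit \otimesover{\Free_{\E_{k+1}}(W)} \unit$ to $R$ (in $\E_k$-algebras) — no, more carefully: the square exhibits a map from the homotopy pushout of $\unit \from W \to B$ in $\E_k$-algebras to $R$. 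So the real content is to identify that homotopy pushout, or rather to lift the bottom-left corner of the square from $\unit$ to $\Free_{\E_{k+1}}(W)$ compatibly.

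The key step is to rephrase using centers. By Proposition~\ref{prop:algebrasovercenter} (applied in the appropriate iterated-algebra category, as in the proof of Proposition~\ref{prop:Opushout}), an $\E_{k+1}$-algebra map $B \to \mf Z(R)$ is the same as an $\E_{k+1}$-algebra $B$ together with an action on $R$ compatible with its $\E_k$-structure; thus we may regard $R$ as an $\E_k$-algebra in $\LMod_B$. Under this identification, the map $\unit \to R$ in $\mc C$ together with $W \to B$ and the nullhomotopy assemble (via the universal property in Proposition~\ref{prop:EZmaps} / Corollary~\ref{cor:barsuspension}) into an $\E_k$-algebra map $B \otimesover{\Free_{\E_{k+1}}(W)} \unit \to R$ in $\LMod_B$. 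Here I would invoke Proposition~\ref{prop:freeEkpushout}, which says exactly that $B \otimesover{\Free_{\E_{k+1}}(W)} \unit$ is the homotopy pushout of $B \from B \otimes \Free_{\E_k}(W) \to B$; a map out of a homotopy pushout of this shape is precisely a pair of $\E_k$-algebra maps $B \to R$ (in $\LMod_B$, the canonical one) agreeing after precomposition with the two maps from $B \otimes \Free_{\E_k}(W)$, i.e.\ a nullhomotopy of $W \to B \to R$ through $\unit$. So the given square supplies such a map, and restricting it along $\Free_{\E_{k+1}}(W) \to B \otimesover{\Free_{\E_{k+1}}(W)} \unit$ (the structure map, an $\E_{k+1}$-algebra map when $W$ is augmented, by Corollary~\ref{cor:barsuspension} and the identification of $\Free_{\E_{k+1}}(W)$ with the enveloping algebra of $\Free_{\E_k}(W)$) produces the desired map $\tilde f\co \Free_{\E_{k+1}}(W) \to B$, while $\tilde\epsilon$ is the augmentation; the outer square commutes by construction.

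I expect the main obstacle to be bookkeeping of \emph{which} category each universal property lives in: the statement wants a square of $\E_{k+1}$-algebra maps (for $\tilde f$ and $\tilde\epsilon$) but the pushout diagrams of Proposition~\ref{prop:freeEkpushout} are diagrams of $\E_k$-algebras in $\LMod_B$, and passing between "$\E_{k+1}$-algebra map to $\mf Z(R)$" and "$\E_k$-algebra in $\LMod_B$ acting on $R$" must be done carefully using Dunn additivity ($\Alg_{\E_{k+1}} = \Alg_{\E_1}(\Alg_{\E_k})$) exactly as in the proof of Proposition~\ref{prop:Opushout}. The subtle point is that $\tilde\epsilon$ needs to be an $\E_{k+1}$-algebra map and the factorization must respect that; this is where the hypothesis that the lift lands in the \emph{center} (rather than just a map $B \to R$) is essential, since it is what makes $R$ an object of $\LMod_B$ in the first place and thus lets the $\LMod_B$-level pushout of Proposition~\ref{prop:freeEkpushout} see $R$. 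Once the categories are correctly fixed, the argument is a formal consequence of the universal property of the homotopy pushout.
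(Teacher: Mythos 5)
Your proposal is correct and follows essentially the same route as the paper: use the $\E_{k+1}$-map $B \to \mf Z(R)$ to regard $R$ as an $\E_k$-algebra in $\LMod_B$, extend the square along the free--forgetful and base-change adjunctions to a commutative square of $\E_k$-algebras in $\LMod_B$ with corners $B \otimes \Free_{\E_k}(W)$, $B$, $B$, $R$, and then invoke Proposition~\ref{prop:freeEkpushout} to identify the resulting homotopy pushout with $B \otimes_{\Free_{\E_{k+1}}(W)} \unit$, through which the map to $R$ factors. The only slip is presentational: the factorization does not ``produce'' $\tilde f$ (which is simply the adjoint of $f$); what it produces is the homotopy identifying the two composites $\Free_{\E_{k+1}}(W) \to R$, i.e.\ the commutativity of the extended square.
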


\begin{proof}
  Using the adjunction between $\mc C$ and $\E_k$-algebras we can extend the original square from $W$ to $\Free_{\E_k}(W)$, and then using the adjunction between $\E_k$-algebras in $\mc C$ and in $\LMod_B$, we get a commutative diagram
  \[
  \begin{tikzcd}
    B \otimes \Free_{\E_k}(W) \ar[r, "m"] \ar[d, "1 \otimes \tilde\epsilon", swap] &
    B \ar[d] \\
    B \ar[r] &
    R
  \end{tikzcd}
  \]
  of $\E_k$-algebras in $\LMod_B$; the unit map $B \to R$ therefore factors through the homotopy pushout. By the previous proposition, this is a factorization
  \[
  B \to B \otimes_{\Free_{\E_{k+1}}(W)} \unit \to R,
  \]
  which proves that the map $\Free_{\E_{k+1}}(W) \to R$ factors through $\tilde \epsilon$.
\end{proof}

\subsection{Classifying spaces}

A connection to the theory of classifying spaces appears when $\mc C = \mc M$ is the category of spaces, with Cartesian product.

\begin{example}
An $\E_0$-algebra in spaces is a pointed space, and for a well-pointed space $X$ the free $\E_1$-algebra on $X$ is modeled by the free associative algebra: the James construction $J(X)$.

Taking $B = R = \ast$, our main result then implies that there is a homotopy pushout diagram
\[
  \begin{tikzcd}
    X \ar[r] \ar[d] &
    \ast \ar[d] \\
    \ast \ar[r] &
    \twobar(\ast,{J(X)},\ast)
  \end{tikzcd}
\]
of spaces. In other words, there is an equivalence
\[
  \Sigma X \simeq B(J(X))
\]
between the classifying space of the James construction and the suspension of $X$. Moreover, this arises from a well-known homotopy pushout diagram
\[
  \begin{tikzcd}
    J(X) \times X \ar[r] \ar[d] &
    J(X) \ar[d] \\
    J(X) \ar[r] &
    \ast
  \end{tikzcd}
\]
of left modules over $J(X)$. This extends to a general description of $\twobar(\ast, J(X), M)$ as a homotopy pushout for any space $M$ with a left action of $J(X)$.
\end{example}

\begin{example}
  In the category of spaces, the easiest examples of $\E_k$-algebras are $k$-fold loop spaces. Because any map of $\E_k$-algebras $\Omega^k Y \to B$ must factor through the subspace $B^\times$ of grouplike elements, the recognition principle for $k$-fold and $(k+1)$-fold loop spaces \cite{MayLoopspaces} implies that the enveloping algebra $\E(\Omega^k Y)$ is $\Omega^{k+1} \Sigma Y$ (assuming $Y$ is $(k-1)$-connected).

  In these cases, the pushout diagrams in question are not new: they are gotten by applying $\Omega^k$ to homotopy pushout diagrams of $(k-1)$-connected spaces. Any new utility is that these pushouts still hold in \emph{non}-grouplike situations.
\end{example}

\section{Applications}\label{sec:applications}

We now specialize to applications in stable homotopy theory.

\subsection{$\E_3$ ring spectra}


\begin{theorem}\label{thm:e3smash}
  Fix a prime $p$, and consider the pair of $\E_3$-algebra maps $\Free_{\E_3}(S^0) \rightrightarrows \mb S$, classifying $0$ and $p$ respectively. Then the relative smash product is equivalent to the Eilenberg--Mac Lane spectrum $H\F_p$:
  \[
    H\F_p \simeq \mb S \smashover{\Free_{\E_3}(S^0)} \mb S
  \]
\end{theorem}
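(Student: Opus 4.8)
The plan is to identify $\mb S \smashover{\Free_{\E_3}(S^0)} \mb S$ with a homotopy pushout of $\E_2$-ring spectra that kills $p \in \pi_0 \mb S$, and then to invoke the Hopkins--Mahowald theorem to recognize that pushout as $H\F_p$. I would apply the free/augmented specialization of the main result, Proposition~\ref{prop:freeEkpushout}, in the case $\mc C = \mathrm{Sp}$, $k = 2$, $B = \mb S$ (which is $\E_\infty$, hence in particular $\E_3$), and the diagram $\unit \xleftarrow{0} S^0 \xrightarrow{p} \mb S$ in spectra whose legs are multiplication by $0$ and by $p$. Recall that $\Free_{\E_3}(S^0)$ is the enveloping $\E_3$-algebra of $\Free_{\E_2}(S^0)$ (the universal property of free algebras, cf.\ the discussion around Corollary~\ref{cor:barsuspension}); in particular the two $\E_3$-algebra maps $\Free_{\E_3}(S^0) \rightrightarrows \mb S$ classifying $0$ and $p$ endow the two copies of $\mb S$ with the module structures over $\Free_{\E_3}(S^0)$ used in the relative smash product, so that the $\mb S \smashover{\Free_{\E_3}(S^0)} \mb S$ in the statement is exactly the object $B \otimes_{\Free_{\E_3}(S^0)} \unit$ appearing in Proposition~\ref{prop:freeEkpushout}.

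With these choices, Proposition~\ref{prop:freeEkpushout} yields a homotopy pushout square of $\E_2$-ring spectra (that is, of $\E_2$-algebras in $\LMod_{\mb S}$)
\[
\begin{tikzcd}
\Free_{\E_2}(S^0) \ar[r,"p"] \ar[d,swap,"0"] & \mb S \ar[d] \\
\mb S \ar[r] & \mb S \smashover{\Free_{\E_3}(S^0)} \mb S
\end{tikzcd}
\]
in which $\Free_{\E_2}(S^0) \simeq \mb S \otimes \Free_{\E_2}(S^0)$, the top arrow is the $\E_2$-algebra map classifying $p$, and the left arrow is the augmentation (the $\E_2$-algebra map classifying $0$). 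Thus $\mb S \smashover{\Free_{\E_3}(S^0)} \mb S$ is the initial $\E_2$-ring under $\mb S$ in which $p$ becomes null --- the $\E_2$-quotient $\mb S \aquot_{\E_2} p$ obtained by attaching a single $\E_2$-cell along $p \in \pi_0 \mb S$. It is genuinely $\E_2$, rather than merely $\E_1$, because the relative tensor is taken over an $\E_3$-algebra; this is precisely why the theorem is phrased in terms of $\E_3$-maps out of $\Free_{\E_3}(S^0)$.

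Finally, the Hopkins--Mahowald theorem identifies this object: $H\F_p$ is the Thom spectrum of the $\E_2$-map $\Omega^2 S^3 \to BGL_1(\mb S)$ detecting $p$, equivalently the initial $\E_2$-ring under $\mb S$ with $p = 0$ in $\pi_0$, so $\mb S \aquot_{\E_2} p \simeq H\F_p$ and the theorem follows. I expect no genuine obstacle beyond bookkeeping: one must check that the homotopy pushout of $\E_2$-rings produced by Proposition~\ref{prop:freeEkpushout} coincides with the cell-attachment model used in the usual formulation of Hopkins--Mahowald, keep straight which structure map classifies $0$ and which classifies $p$ (hence how each copy of $\mb S$ is a $\Free_{\E_3}(S^0)$-module), and confirm that no further completion is needed because the output is already an $H\F_p$-module. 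The substantive input --- that killing $p$ as an $\E_2$-operation yields $H\F_p$ --- is imported wholesale from Hopkins--Mahowald.
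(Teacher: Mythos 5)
Your proposal is correct and is essentially the paper's own argument: both identify $\mb S \smashover{\Free_{\E_3}(S^0)} \mb S$ with the $\E_2$-pushout $\mb S \aquot_{\E_2} p$ via the free-algebra specialization of the main theorem, and then import the Hopkins--Mahowald identification of that versal $\E_2$-quotient with $H\F_p$. The only cosmetic difference is that you invoke Proposition~\ref{prop:freeEkpushout} where the paper cites Corollary~\ref{cor:barsuspension}; your citation is, if anything, the more precise one given that the two structure maps ($0$ and $p$) are not both augmentations.
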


\begin{proof}
  A theorem of Mahowald at $p=2$, and of Hopkins at odd primes, is that the Eilenberg--Mac Lane spectrum $H\F_p$ is the Thom spectrum of a stable ($p$-local) spherical fibration over $\Omega^2 S^3$. This gives an alternative description of $H\F_p$ as the universal example $\mb S \aquot_{\E_2} p$ of an $\E_2$-algebra over $\mb S$ with a nullhomotopy of the element $p \in \pi_0(\mb S)$. This makes it the homotopy pushout of a diagram
  \[
    \begin{tikzcd}
      \Free_{\E_2}(S^0) \ar[r,"p"] \ar[d,"0",swap] &
      \mb S \ar[d] \\
      \mb S \ar[r] &
      H\mb F_p
    \end{tikzcd}
  \]
  of $\E_2$-algebras, and hence a relative tensor over the free $\E_3$-algebra by Corollary~\ref{cor:barsuspension}.
\end{proof}

\begin{reminder}
  For the purposes of this paper, a \emph{tower} of spectra is a functor from the poset $\N^{op}$ to spectra,
  \[
  \dots \to X_2 \to X_1 \to X_0
  \]
  whose \emph{underlying object} is $X_0$ and whose \emph{associated graded} is the graded spectrum $\{X_n / X_{n+1}\}_{n \in \N}$. We will write $F_k X$ for the tower
  \[
  \dots \to \ast \to X \to \dots \to X
  \]
  which is free on $X$ in filtration $k$; there is a natural map $F_k X  \to F_{k+1} X$.
  
  The monoidal structure on $\N$ gives towers a symmetric monoidal $\infty$-category structure under the Day convolution, where
  \[
  (X_\bullet \wedge Y_\bullet)_n = \hocolim_{p+q\geq n} X_p \wedge Y_q,
  \]
  and $F_0 S^0$ is the monoidal unit $\SS$. A \emph{filtered $\E_k$-algebra} is an $\E_k$-algebra in the category of towers of spectra. Both the underlying and associated graded functors are strong symmetric monoidal, and so preserve $\E_k$-algebras and (relative) smash products.
\end{reminder}

\begin{theorem}\label{thm:weirdmayfilt}
  For any prime $p$, there exists a filtered $\E_2$-algebra $R$ whose underlying spectrum is $\SS$ and whose associated graded is $H\F_p \wedge \Free_{\E_3}(S^0)$, with $S^0$ in filtration $1$ representing the image of $p$.
\end{theorem}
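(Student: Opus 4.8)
The plan is to realise the Hopkins--Mahowald presentation of $H\F_p$ used in Theorem~\ref{thm:e3smash} inside the symmetric monoidal $\infty$-category of filtered spectra, set up so that the element $p\in\pi_0(\SS)$ is carried by a free generator placed in filtration $1$. The essential formal leverage is that both the underlying-object functor and the associated-graded functor on filtered spectra are strong symmetric monoidal, hence commute with the formation of $\E_k$-algebras, free algebras, homotopy pushouts, and relative tensors. It therefore suffices to exhibit one filtered $\E_2$-algebra $R$ by a natural construction and then compute its generic fibre (the underlying spectrum) and its special fibre (the associated graded) separately; that $R$ is an honest $\E_2$-algebra is built into the construction, and the spectral sequence of Corollary~\ref{cor:weirdmaysseq} is just the spectral sequence of the resulting filtration, with abutment $\pi_*R$ computed from the underlying spectrum.

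Recall from the proof of Theorem~\ref{thm:e3smash} that $H\F_p=\SS\aquot_{\E_2}p$ is the homotopy pushout, in $\E_2$-algebras, of $\SS\xleftarrow{0}\Free_{\E_2}(S^0)\xrightarrow{p}\SS$, and that by Corollary~\ref{cor:barsuspension} this pushout is the relative tensor $\SS\otimes_{\Free_{\E_3}(S^0)}\SS$ over the enveloping $\E_3$-algebra $\Free_{\E_3}(S^0)=\E(\Free_{\E_2}(S^0))$. In filtered spectra, let $\iota\co F_0 S^0\to F_1 S^0$ be the canonical filtration-raising map, and note that its underlying map is $\id_{S^0}$ while its associated graded is the zero map from the weight-$0$ copy of $S^0$ to the weight-$1$ copy. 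I would form $R$ as a homotopy pushout of filtered $\E_2$-algebras built from $\Free_{\E_2}(F_1 S^0)$ in which the leg that would classify $p$ is factored through $\Free_{\E_2}(\iota)$; because one leg of the pushout then becomes an equivalence after applying the underlying functor (via $\id_{S^0}$) but becomes trivial after applying the associated-graded functor (via the zero map, a shift in weight), the generic fibre collapses to $\SS$ whereas the special fibre retains a free graded algebra on the weight-$1$ generator. The distinguished filtration-$1$ class is the image of $F_1 S^0\to\Free_{\E_2}(F_1 S^0)\to R$, whose underlying map is $p$ by construction. This ``equivalence generically, zero specially'' device is precisely the mechanism used by Baker and Szymik.

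Applying the underlying functor and the collapse just described produces $\SS$, and identifies the chosen filtration-$1$ class with $p\in\pi_0(\SS)$. Applying the associated-graded functor, the pasting law for homotopy pushouts separates the computation into the weight-$0$ part---which is exactly the $\E_2$-pushout computing $\SS\aquot_{\E_2}p=H\F_p$, now supported in weight $0$---and a free graded algebra on the weight-$1$ generator; since the coproduct of an algebra with a free algebra over the unit is a base-changed free algebra, the associated graded is $H\F_p\wedge\Free_{\E_3}(S^0)$ with the generator in weight $1$. I expect the main obstacle to be the construction itself together with this last identification: ``pushing $p$ into filtration $1$'' cannot be done naively, since there is no nontrivial filtered map $F_1 S^0\to\SS$, and the multiplicative bookkeeping must be arranged so that the part of the $p$-killing that survives to the special fibre is the $\E_2$-quotient $H\F_p$ (not a higher $\E_n$-quotient) while the free factor on the filtration-$1$ generator is a free $\E_3$-algebra (not merely $\E_2$). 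Both of these features are forced only by routing the argument through the enveloping-algebra relationship of Corollary~\ref{cor:barsuspension} and the bar-construction identity immediately following it, applied at the right point; getting that routing exactly right is where the real work lies.
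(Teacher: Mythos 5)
Your mechanism is exactly the one the paper uses: exploit the canonical map $F_0S^0\to F_1S^0$, which is an equivalence on underlying spectra but zero on associated graded, so that a single filtered object has generic fibre $\SS$ and special fibre a quotient by $p$ smashed with a free algebra on the weight-one generator. You also correctly diagnose both pressure points (there is no filtered map $F_1S^0\to\SS$ hitting $p$, so the generator cannot be mapped to $p$ directly; and the free factor in the associated graded must come out $\E_3$, not $\E_2$). The difficulty is that you stop there: the one construction you actually sketch --- a pushout of $\E_2$-algebras built from $\Free_{\E_2}(F_1S^0)$ with a leg factored through $\Free_{\E_2}(\iota)$ --- yields on associated graded $H\F_p$ together with a free $\E_2$-algebra on the weight-one generator, not the free $\E_3$-algebra the theorem asserts. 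You acknowledge this and defer the repair to ``routing through the enveloping-algebra relationship \dots applied at the right point,'' but that routing is the entire content of the construction, so as written the argument has a gap at its crux.

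The paper's resolution is short and worth recording. Writing $\mb P^{\E_3}(x_0)=\Free_{\E_3}(F_0S^0)$ and $\mb P^{\E_3}(y_1)=\Free_{\E_3}(F_1S^0)$ for the free \emph{filtered $\E_3$-algebras}, one defines
\[
R \;=\; \SS \smashover{\mb P^{\E_3}(x_0)} \mb P^{\E_3}(y_1),
\]
where the left map sends $x_0$ to $p$ and the right map sends $x_0$ to the image of $y_1$ under $\iota$. This relative smash product is a filtered $\E_2$-algebra by Theorem~\ref{thm:Ekpushout}; its underlying object is $\SS$ because $\mb P^{\E_3}(x_0)\to\mb P^{\E_3}(y_1)$ is an underlying equivalence; and on associated graded the right-hand map kills $\bar x_0$, hence factors through $\SS$, so the relative tensor decomposes as $(\SS\smashover{\mb P^{\E_3}_{gr}(\bar x_0)}\SS)\smashover{\SS}\mb P^{\E_3}_{gr}(\bar y_1)$, which Theorem~\ref{thm:e3smash} identifies with $H\F_p\wedge\Free_{\E_3}(S^0)$ with generator in weight one. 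In other words, the correct move is to keep the free $\E_3$-algebras (the enveloping algebras of the free $\E_2$-algebras) in play throughout, as both the base of the relative tensor and the object being base-changed, rather than forming a pushout of free $\E_2$-algebras and trying to upgrade the free factor afterwards.
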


\begin{proof}
  We will write $\mb P^{\E_3}(x_n)$ for the free filtered $\E_3$-algebra $\Free_{\E_3}(F_n S^0)$ on a generator $x_n$ in filtration $n$, and $\mb P_{gr}^{\E_3}(y_n)$ for the free graded $\E_3$-algebra on a generator $y_n$ in grading $n$.
  
  There is a diagram of filtered $\E_3$-algebras
  \[
  \SS \from \mb P^{\E_3}(x_0) \to \mb P^{\E_3}(y_1),
  \]
  where the left-hand map sends $x_0$ to $p \in \pi_0 \SS$ and the right-hand map sends $x_0$ to the image of $y_1$ in filtration zero. We define $R$ to be the (derived) smash product
  \[
  R = \SS \smashover{\mb P^{\E_3}(x_0)} \mb P^{E_3}(y_1)
  \]
  in the category of towers. Because it is a relative smash product of filtered $\E_3$-algebras, it has the structure of a filtered $\E_2$-algebra.
  
  The map $\mb P^{\E_3}(x_0) \to \mb P^{\E_3}(y_1)$ is an equivalence on underlying spectra, and so the underlying $\E_2$-algebra of $R$ is the sphere spectrum $\SS$.
  
  On associated graded, this becomes a diagram of graded $\E_3$-algebras
  \[
  \SS \from \mb P_{gr}^{\E_3}(\bar x_0) \to \mb P_{gr}^{\E_3}(\bar y_1).
  \]
  The left-hand map still sends $\bar x_0$ to the element $p$. The right-hand map now sends $\bar x_0$ to $0$ because $y_1$, being of filtration $1$, now has trivial image in degree zero of the associated graded. Therefore, there is a factorization
  \[
  \mb P_{gr}^{\E_3}(\bar x_0) \to \SS \to \mb P_{gr}^{\E_3}(\bar y_1),
  \]
  and the associated graded has an equivalence
  \[
  \mathrm{gr}(R) \simeq \SS \smashover{\mb P_{gr}^{\E_3}(\bar x_0)} \mb P_{gr}^{\E_3}(\bar y_1) \simeq (\SS \smashover{\mb P_{gr}^{\E_3}(\bar x_0)} \SS)\ \smashover{\SS} \ \mb P_{gr}^{\E_3}(\bar y_1).
  \]
  Theorem~\ref{thm:e3smash} shows that this algebra $\SS \smashover{\mb P_{gr}^{\E_3}(x_0)} \SS$ is the algebra $H\F_p$ concentrated in grading zero, and so this becomes
  \[
  \mathrm{gr}(R) \simeq H\F_p \wedge \Free_{\E_3}(y_1)
  \]
  as graded $\E_2$-algebras.
\end{proof}

\begin{corollary}\label{cor:weirdmaysseq}
  The filtered spectrum of Theorem~\ref{thm:weirdmayfilt} gives rise to a spectral sequence of graded-commutative algebras, whose abutment is $\pi_\ast \mb S^\wedge_p$.

  For $p=2$, the $E_1$-term is
  \[
  \F_2[\mf h_{i,j}],
  \]
  where $\mf h_{i,j}$ are defined for $i \geq 1, j \geq 0$, with total degree and filtration given by the bidegree $(2^{i+j} - 2^j - 1, 2^{i+j-1})$. For $p$ odd, the $E_1$-term is
  \[
    \F_p[\mf v_i] \otimes \Lambda[\mf h_{i,j}] \otimes \F_p[\mf b_{i,j}],
  \]
  where the bidegree of $\mf v_i$ is $(2p^i-2, p^{i})$ for $i \geq 0$, of $\mf h_{i,j}$ is $(2p^{i+j}-2p^j-1,p^{i+j})$ for $i \geq 1, j \geq 0$, and of $\mf b_{i,j}$ is $(2p^{i+j+1}-2p^{j+1}-2,p^{i+j+1})$ for $i \geq 1, j \geq 0$.
\end{corollary}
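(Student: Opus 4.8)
The plan is to identify the $E_1$-page as the homotopy ring of the associated graded of $R$, to compute that ring as a free Dyer--Lashof algebra, and then to compare it with the May spectral sequence. First I would run the standard spectral sequence of a filtered spectrum: for the tower $\cdots \to R_1 \to R_0$ underlying $R$ one obtains $E_1^{s} = \pi_\ast(\mathrm{gr}^s R)$ with $d_r$ of total degree $-1$, and since $R$ is a filtered $\E_2$-algebra the Day-convolution product makes this a spectral sequence of graded-commutative rings whose $E_1$-term is the graded-commutative ring $\pi_\ast(\mathrm{gr}\, R)$. Each $\mathrm{gr}^s R$ is a connective $H\F_p$-module, so the filtration is complete with respect to $p$: the tower of cofibres $\{R_0/R_s\}$ is (after the evident bookkeeping) the $p$-adic tower on the underlying sphere, which yields conditional convergence of the spectral sequence to $\pi_\ast \SS^\wedge_p$; this comparison is the only place where completions enter, and I would spell it out.

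Next I would identify $\pi_\ast(\mathrm{gr}\,R)$. By Theorem~\ref{thm:weirdmayfilt}, $\mathrm{gr}\,R \simeq H\F_p \wedge \Free_{\E_3}(S^0)$ with the generating $S^0$ placed in weight $1$, so weight counts factors of the generator; equivalently, using that $H\F_p \wedge (-)$ is strong symmetric monoidal and hence commutes with $\Free_{\E_3}$, this is the free $\E_3$-algebra over $H\F_p$ on one generator in degree $0$ and weight $1$. Unwinding $\Free_{\E_3}(S^0) \simeq \bigvee_{k \geq 0} \Sigma^\infty_+ C_k(\R^3)$, the ring $\pi_\ast(\mathrm{gr}\,R)$ is the $\F_p$-homology of the free little-$3$-cubes space on a point, weight-graded by the number of points, which I would compute from F.~Cohen's description of the homology of free $E_n$-spaces: it is the free allowable $E_3$-Dyer--Lashof algebra on one class $x$ in degree $0$. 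At $p = 2$ this is the polynomial algebra on classes $\mf h_{i,j}$ for $i \geq 1$, $j \geq 0$, where $\mf h_{i,j}$ is obtained by applying the $E_2$-operation $\xi_1$ ($d \mapsto 2d+1$) a total of $j$ times to the result of applying the $E_3$-operation $\xi_2$ ($d \mapsto 2d+2$) a total of $i-1$ times to $x$ — the admissible iterated operations on a degree-zero class built from the two operations an $E_3$-algebra carries — with $\mf h_{1,0} = x$. Each operation doubles the weight, so a routine induction gives $\mf h_{i,j}$ total degree $2^{i+j} - 2^j - 1$ and weight $2^{i+j-1}$, establishing the claimed $E_1$-term at the prime $2$.

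At an odd prime the same analysis applies with the odd-primary $E_3$-Dyer--Lashof structure: the free allowable algebra on a degree-$0$ class factors as a polynomial algebra on the even-degree iterates of the bottom operation (the $\mf v_i$, with $\mf v_0 = x$), an exterior algebra on the odd-degree classes produced by the odd operation (the $\mf h_{i,j}$), and a polynomial algebra on the Bocksteins of those (the $\mf b_{i,j}$); chasing the degree and weight formulas for the three families gives exactly the stated bidegrees. Finally I would match this with the May spectral sequence, whose $E_1$-term is $\F_2[h_{i,j}]$ at $p=2$ and the corresponding three-factor ring at odd primes, and observe that the bidegrees above differ from the usual $(s,t)$-bidegrees of the May generators only by an explicit regrading — a statement about the two $E_1$-rings, since the spectral sequences diverge from $E_2$ onward (Remark~\ref{rmk:sseqblather}). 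I expect the crux to be invoking Cohen's computation in precisely the form needed — as a graded-commutative ring, with the Dyer--Lashof weight grading identified with the Day-convolution filtration — together with, at odd primes, the Bockstein bookkeeping responsible for the three-factor shape; the conditional-convergence argument also requires some care, since only connective rather than highly connected associated graded pieces are available.
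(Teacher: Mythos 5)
Your proposal follows essentially the same route as the paper: identify the $E_1$-term with $\pi_\ast$ of the associated graded $H\F_p \wedge \Free_{\E_3}(S^0)$ (generator in weight $1$) and read off the generators and bidegrees from Cohen--Lada--May's computation of the homology of free $\E_3$-algebras, with the weight grading supplying the filtration degree. The only place you go beyond the paper is the convergence discussion, which the paper omits entirely; there your claim that the tower of cofibres $R_0/R_s$ is literally the $p$-adic tower is not quite right --- $R_0/R_1 \simeq H\F_p$, not $\SS/p$ --- although the intended mechanism (finite-type $H\F_p$-module associated graded pieces forcing the partial quotients, and hence the limit, to be $p$-complete) is the correct one.
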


\begin{proof}
  The coefficient ring of the associated graded is the homology of a free $\E_3$-algebra on the pointed space $S^1$, with generator $y_1$ in bidegree $(1,0)$. This is completely determined in \cite[Theorem III.3.1]{CohenLadaMayHomology} in terms of the Dyer--Lashof operations. At $p=2$, the Dyer--Lashof operation $Q_i$ sends an element in bidegree $(r,s)$ to bidegree $(2r+i, 2s)$, while at odd primes the operations $Q_i$ and $\beta Q_i$ send such an element to bidegree $(pr + 2i(p-1), ps)$ and $(pr + 2i(p-1)-1, ps)$ respectively. (Here we take the convention that lower-indexed Dyer--Lashof operations satisfy $Q_j x = Q^{|x|/2 + j}$ at odd primes).

  At $p=2$, if we define $\mf h_{i,j} = Q_1^{(j)} Q_2^{(i-1)} y_1$, then
  \[
    \pi_* \mathrm{gr}(R) \cong \F_2[\mf h_{i,j} \mid i \geq 1, j \geq 0].
  \]
  The element $\mf h_{i,j}$ is in bidegree $(2^{i+j} - 2^j - 1, 2^{i+j-1})$, which can be shown by induction.
  
  At odd primes, if we define
  \begin{align*}
    \mf v_i &= Q_1^{(i)} y_1\\
    \mf h_{i,j} &= Q_{1/2}^{(j)} \beta Q_1^{(i)} y_1\\
    \mf b_{i,j} &= \beta Q_{1/2}^{(j+1)} \beta Q_1^{(i)} y_1
  \end{align*}
  then
  \[
    \pi_* \mathrm{gr}(R) \cong \F_p[\mf v_i \mid i \geq 0] \otimes \Lambda[\mf h_{i,j} \mid i \geq 1, j \geq 0] \otimes \F_p[\mf b_{i,j} \mid i \geq 1, j \geq 0] .
  \]
  Here the bidegree of $\mf v_i$ is $(2p^i-2, p^{i})$, of $\mf h_{i,j}$ is $(2p^{i+j}-2p^j-1,p^{i+j})$, and of $\mf b_{i,j}$ is $(2p^{i+j+1}-2p^{j+1}-2,p^{i+j+1})$.
  
  These associated graded coefficient rings are then the $E_1$-terms of the associated spectral sequence.
\end{proof}

\begin{remark}\label{rmk:sseqblather}
   While the underlying graded rings are abstractly isomorphic to the $E_1$-term of the May spectral sequence, the filtration and the differentials are quite different.
   
   For example, in the $2$-primary May spectral sequence, the element $\nu \in \pi_3 \mb S$ is detected by $h_{1,2}$ and the element $\sigma \in \pi_7 \mb S$ is detected by $h_{1,3}$. In the spectral sequence of Corollary~\ref{cor:weirdmaysseq}, neither $\mf h_{1,2}$ nor $\mf h_{1,3}$ are permanent cycles; $\nu$ is instead detected by the corrected element $\mf h_{1,2} + \mf h_{1,1} \mf h_{2,0}$, while $\sigma$ is detected by $\mf h_{1,1} \mf h_{3,0} + \mf h_{2,1} \mf h_{2,0}$, in strictly lower filtration than $\mf h_{1,3}$. As a possible point of view, the Hopf invariant classes in the classical Adams spectral sequence arise from algebraic power operations that require geometric correction terms before they are genuinely represented by stable homotopy elements. In this new spectral sequence, the correction terms have lower filtration than the power operations themselves, and so are detected first.
\end{remark}

\begin{remark}
   There is an alternative approach to the construction of this filtered object $R$ using Goodwillie calculus, which will appear in forthcoming joint work of the second author with Eldred.
\end{remark}

\subsection{Dual Steenrod quotients}

The following is a specialization of Proposition~\ref{prop:freeEkpushout}.

\begin{proposition}\label{prop:ekquot}
  Suppose that $B \to C$ is a map of commutative ring spectra and $\alpha \in \pi_*(C)$ is an element. Then the quotient, in the category of $\E_k$ algebras in $\LMod_C$, has an equivalence:
  \[
    C \mathop{\aquot}_{\E_k} \alpha \simeq C \smashover{\Free^B_{\E_{k+1}}(S^n)} B.
  \]
\end{proposition}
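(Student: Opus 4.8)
The plan is to deduce this directly from Proposition~\ref{prop:freeEkpushout}, once the ambient category and the input data are chosen correctly. Write $n = |\alpha|$. I would take $\mc C = \LMod_B$, which is symmetric monoidal presentable — hence in particular $\E_{k+1}$-monoidal — because $B$ is a commutative ring spectrum; and $C$, being a commutative $B$-algebra, is an $\E_{k+1}$-algebra in this category. For the object of $\mc C$ I would take $W = \Sigma^n B = B \wedge S^n$, the free $B$-module on the sphere $S^n$, with the map $W \to C$ the $B$-linear extension of $\alpha\co S^n \to C$, and with augmentation $\epsilon\co W \to B = \unit$ the zero map. Applying Proposition~\ref{prop:freeEkpushout} to $\mc C = \LMod_B$, with the role of its ``$B$'' played by $C$ and with the diagram $\unit \xleftarrow{\epsilon} W \to C$, produces a homotopy pushout square of $\E_k$-algebras in $\LMod_C$ whose lower-right corner is exactly $C \smashover{\Free^B_{\E_{k+1}}(S^n)} B$, where $\Free^B_{\E_{k+1}}(S^n) = \Free_{\E_{k+1}}(W)$ is the free $\E_{k+1}$-algebra in $\LMod_B$ on $W$.

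What then remains is to recognize the other three corners of that square as the defining pushout for $C \aquot_{\E_k} \alpha$. I would first observe that the free $\E_k$-algebra functor commutes with the base-change functor $\LMod_B \to \LMod_C$ (both composites $\LMod_B \to \Alg_{\E_k}(\LMod_C)$ are left adjoint to the forgetful functor $\Alg_{\E_k}(\LMod_C) \to \LMod_B$), so the upper-left corner $C \wedge_B \Free^B_{\E_k}(S^n) \simeq \Free^C_{\E_k}(\Sigma^n C)$ is the free $\E_k$-algebra in $\LMod_C$ on a generator in degree $n$. Tracing the construction of the square through the adjunctions, the horizontal map out of this corner classifies $\alpha$ and the vertical map classifies $0$. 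Since $C \aquot_{\E_k} \alpha$ is by definition the universal $\E_k$-algebra under $C$ in $\LMod_C$ equipped with a nullhomotopy of $\alpha$ — equivalently, the pushout of $C \xleftarrow{\alpha} \Free^C_{\E_k}(\Sigma^n C) \xrightarrow{0} C$ in $\E_k$-algebras in $\LMod_C$ — this exhibits the square as that very pushout, and comparing lower-right corners gives the claimed equivalence.

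I do not expect a genuine obstacle: the mathematical content lies entirely in Proposition~\ref{prop:freeEkpushout}, and the present statement is a repackaging of it. The only thing requiring care is the matching of conventions — verifying, after unwinding the free--forgetful and base-change adjunctions, that the map labeled $m$ in Proposition~\ref{prop:freeEkpushout} really classifies $\alpha$ rather than $0$, and, in the boundary case $n = 0$ where $W = B$, confirming that taking $\epsilon$ to be the zero map of $B$-modules (not the identity) is the correct choice, so that $\Free^B_{\E_{k+1}}(S^0)$ is augmented over $B$ in the way that makes $C \smashover{\Free^B_{\E_{k+1}}(S^0)} B$ the honest quotient rather than something degenerate.
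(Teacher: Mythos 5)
Your proposal is correct and matches the paper's intent exactly: the paper gives no argument beyond the remark that the proposition ``is a specialization of Proposition~\ref{prop:freeEkpushout},'' and your instantiation ($\mc C = \LMod_B$, $W = \Sigma^n B$, $\epsilon = 0$, the map $W \to C$ the $B$-linear extension of $\alpha$) together with the base-change identification of the upper-left corner as $\Free^C_{\E_k}(\Sigma^n C)$ is precisely the specialization intended. Nothing further is needed.
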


\begin{corollary}\label{cor:killingqk}
  Suppose that $C$ is an $\E_\infty$ $B$-algebra and $R$ is an $\E_k$ $C$-algebra. If $\alpha \in \pi_* C$ maps to zero in $R$, then all of the $B$-algebra $\E_{k+1}$ Dyer-Lashof operations on $\alpha$ go to zero in $R$.
\end{corollary}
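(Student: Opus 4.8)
The plan is to obtain this as a direct application of Proposition~\ref{prop:EKkilling}, carried out inside the category $\mc C = \LMod_B$ of left $B$-modules. Since $B$ is $\E_\infty$, this category is presentable and symmetric monoidal, hence in particular presentable $\E_{k+1}$-monoidal, and $C$ is an $\E_{k+1}$-algebra (even an $\E_\infty$-algebra) in it. The first thing to check is that $C$ can serve as the ``central'' algebra in Proposition~\ref{prop:EKkilling}: since $R$ is an $\E_k$-algebra in $\LMod_C = \LMod_C(\LMod_B)$, the structure map $C \to R$ refines, by the universal property of the $\E_{k+1}$-center \cite[\S 5.3.1]{LurieHA}, to a map of $\E_{k+1}$-algebras $C \to \mf Z(R)$ in $\LMod_B$.

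Next I would encode the class $\alpha$. Writing $n = |\alpha|$, the element $\alpha \in \pi_n C$ is the same datum as a map $\Sigma^n B \to C$ of left $B$-modules, so I take $W = \Sigma^n B$, let $f\co W \to C$ be $\alpha$, and let the augmentation $\epsilon\co W \to B$ be the zero map. The hypothesis that $\alpha$ becomes zero in $\pi_* R$ says exactly that the composite $W \xrightarrow{\alpha} C \to R$ is nullhomotopic; choosing a nullhomotopy gives precisely the commuting square required to apply Proposition~\ref{prop:EKkilling}. The output is a pair of maps of $\E_{k+1}$-$B$-algebras, $\tilde\alpha\co \Free^B_{\E_{k+1}}(S^n) \to C$ extending $\alpha$ and $\tilde\epsilon\co \Free^B_{\E_{k+1}}(S^n) \to B$ extending the zero map, together with a homotopy identifying the two composites into $R$.

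To conclude, let $\iota$ denote the canonical generator of $\pi_n \Free^B_{\E_{k+1}}(S^n)$, so that $\tilde\alpha_* \iota = \alpha$. For any $B$-algebra $\E_{k+1}$ Dyer--Lashof operation $Q$, naturality of $Q$ along the $\E_{k+1}$-$B$-algebra map $\tilde\alpha$ gives $\tilde\alpha_*(Q\iota) = Q\alpha$. On the other hand, $\tilde\epsilon$ is adjoint to the null map $S^n \to B$ and so factors through $\Free^B_{\E_{k+1}}(\ast) \simeq B$; hence $\tilde\epsilon_*$ annihilates the augmentation ideal, and in particular $\tilde\epsilon_*(Q\iota) = Q(\tilde\epsilon_* \iota) = Q(0) = 0$ since Dyer--Lashof operations vanish on the zero element. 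Using the commuting square, the image of $Q\alpha = \tilde\alpha_*(Q\iota)$ under $C \to R$ agrees with the image of $\tilde\epsilon_*(Q\iota) = 0$ under $B \to R$, and is therefore zero. The same reasoning applied to products and iterated operations shows that the entire augmentation ideal of $\pi_* \Free^B_{\E_{k+1}}(S^n)$ dies in $\pi_* R$.

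Essentially all the content is packaged in Proposition~\ref{prop:EKkilling}, so what remains is bookkeeping. The two points that I would be careful about are: extracting the $\E_{k+1}$-central map $C \to \mf Z(R)$ from the bare datum that $R$ is an $\E_k$-algebra over $C$, which is an unwinding of Lurie's definition of the center; and making sure that the proposition is applied in $\LMod_B$, so that $\Free_{\E_{k+1}}(W)$ is read as the free $\E_{k+1}$-algebra \emph{over} $B$ and ``Dyer--Lashof operation'' as a $B$-algebra operation. Once those are pinned down, the naturality and vanishing-on-zero properties of the operations are standard, so I expect no serious obstacle beyond getting the setup right.
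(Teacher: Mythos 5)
Your argument is correct and matches the route the paper intends: the corollary is left as an immediate consequence of Proposition~\ref{prop:EKkilling} (applied in $\LMod_B$ with $W=\Sigma^n B$, using Proposition~\ref{prop:algebrasovercenter} to extract the central map $C\to\mf Z(R)$ from the $\E_k$-$C$-algebra structure on $R$), and your factorization through $\tilde\epsilon$ plus naturality of the operations is exactly the intended bookkeeping. No gaps.
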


This allows us to prove the following odd-primary analogue of \cite[Theorem~1.2]{BHLSZQuotient}.

\begin{theorem}\label{thm:oddsteenrod}
  Let $p$ be an odd prime and $\mc A = H\F_p \wedge H\F_p$ be the commutative ring spectrum whose coefficient ring is the dual Steenrod algebra. Then the associative quotient $\mc A \aquot \tau_n$ has coefficient ring
  \[
    \mc A_\ast / (\xi_{n+1}, \xi_{n+2}, \dots, \tau_n, \tau_{n+1}, \dots)
  \]
  and $\mc A \aquot \bar \tau_n$ has coefficient ring
  \[
    \mc A_\ast / (\bar \xi_{n+1}, \bar \xi_{n+2}, \dots, \bar \tau_n, \bar \tau_{n+1}, \dots).
  \]
\end{theorem}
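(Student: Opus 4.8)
\emph{Outline of the approach.} The plan is to realize $\mc A \aquot \tau_n$ as a relative smash product over a free $\E_2$-algebra, compute the homotopy of that free algebra from the Cohen--Lada--May description, and pin down the resulting module structure using Steinberger's Dyer--Lashof calculations. Write $d = |\tau_n| = 2p^n - 1$. Proposition~\ref{prop:ekquot}, applied with $B = H\F_p$ (mapped into $\mc A$ by a unit) and $\alpha = \tau_n$, gives an equivalence of $H\F_p$-algebras
\[
  \mc A \aquot \tau_n \simeq \mc A \smashover{\Free^{H\F_p}_{\E_2}(\Sigma^d H\F_p)} H\F_p .
\]
Since free algebras commute with base change along $\SS \to H\F_p$, we have $\mathbb B := \pi_* \Free^{H\F_p}_{\E_2}(\Sigma^d H\F_p) \cong H_*(\Free_{\E_2}(S^d); \F_p)$. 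Because $d$ and $p$ are odd, the self-bracket $[\iota_d, \iota_d]$ of the fundamental class is killed by $2$ and hence vanishes, so the computation of \cite{CohenLadaMayHomology} presents $\mathbb B$ as a free graded-commutative algebra $\Lambda[a_0, a_1, a_2, \dots] \otimes \F_p[b_1, b_2, \dots]$, where $a_i$ is the $i$-fold iterate $Q_{1/2}^{(i)} \iota_d$ of the top $\E_2$ Dyer--Lashof operation (degree $2p^{n+i} - 1$) and $b_i = \beta(a_i)$ (degree $2p^{n+i} - 2$).

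\emph{Identifying the module structure.} Next I would identify the ring map $\mathbb B \to \mc A_*$ induced by $\tau_n \in \pi_d(\mc A)$. The top $\E_2$-operation on a class of degree $2m - 1$ is the Dyer--Lashof operation $Q^m$, which has excess $1$ and is therefore defined at the $\E_2$ stage; Steinberger's formulas \cite{Hinfinity}, $\tau_{i+1} = Q^{p^i}\tau_i$ and $\xi_{i+1} = \beta Q^{p^i}\tau_i$ modulo decomposables, then show that $a_i \mapsto \tau_{n+i}$ and $b_i \mapsto \xi_{n+i}$ modulo decomposable terms. A Poincar\'e series comparison upgrades this to: $\mathbb B \to \mc A_*$ is injective with image the subalgebra $\Lambda[\tau_n, \tau_{n+1}, \dots] \otimes \F_p[\xi_{n+1}, \xi_{n+2}, \dots]$. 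Consequently $\mc A_*$ is a \emph{free} module over $\mathbb B$, with basis $\Lambda[\tau_0, \dots, \tau_{n-1}] \otimes \F_p[\xi_1, \dots, \xi_n]$, and $\mc A_* \otimes_{\mathbb B} \F_p \cong \mc A_* / (\xi_{n+1}, \xi_{n+2}, \dots, \tau_n, \tau_{n+1}, \dots)$.

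\emph{Finishing.} With this in hand the computation finishes itself. The K\"unneth (bar) spectral sequence for the relative smash product has $E_2$-page $\Tor^{\mathbb B}_{\ast\ast}(\mc A_*, \F_p)$; freeness of $\mc A_*$ over $\mathbb B$ forces this page to be concentrated on the $s = 0$ line and equal to $\mc A_* / (\xi_{n+1}, \dots, \tau_n, \tau_{n+1}, \dots)$, so it collapses with no room for differentials or extensions, and we read off $\pi_*(\mc A \aquot \tau_n)$. That the ideal must die is already visible from Corollary~\ref{cor:killingqk}: killing $\tau_n$ associatively kills all of its $\E_2$ Dyer--Lashof operations, and iterating — each of $\tau_{n+i}$ and $\xi_{n+i}$ is such an operation, hence maps to zero and drags its own operations to zero as well — kills the whole ideal. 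The statement for $\bar\tau_n$ follows from the identical argument applied to $\bar\tau_n$, now using that Steinberger's formulas $\bar\tau_{i+1} = Q^{p^i}\bar\tau_i$, $\bar\xi_{i+1} = \beta Q^{p^i}\bar\tau_i$ hold for the conjugate generators without correction terms.

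\emph{Main obstacle.} I expect the real work to be the middle step: reconciling the $\E_2$-level Dyer--Lashof operations seen on the free $\E_2$-algebra with the ($H_\infty$-level) operations appearing in Steinberger's calculations, verifying that the operations $Q^{p^i}$ and $\beta Q^{p^i}$ that hit $\tau_{i+1}$ and $\xi_{i+1}$ genuinely have low enough excess to be defined at the $\E_2$ stage, and controlling the decomposable correction terms well enough to conclude injectivity of $\mathbb B \to \mc A_*$ and hence freeness of $\mc A_*$ over it. A secondary point requiring care is that \emph{all} Browder brackets vanish in the free $\E_2$-algebra on a single odd-degree generator, so that $\mathbb B$ really has the stated polynomial-times-exterior form with no extra generators.
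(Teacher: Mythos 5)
Your overall strategy is the paper's: apply Proposition~\ref{prop:ekquot} with the unit $H\F_p \to \mc A$ and $k=1$, compute $H_*\Free_{\E_2}(S^d)$ from Cohen--Lada--May as $\Lambda[Q_{1/2}^{(i)}\iota]\otimes\F_p[\beta Q_{1/2}^{(i)}\iota]$, identify the images of the generators via Steinberger, and collapse the K\"unneth spectral sequence $\Tor^{\mathbb B}_{**}(\mc A_*,\F_p)$ using freeness (flatness) of $\mc A_*$ over $\mathbb B$. For $\bar\tau_n$ your argument is complete and agrees with the paper's, because Steinberger's formulas $Q^{p^i}\bar\tau_i = \bar\tau_{i+1}$ and $\beta Q^{p^i}\bar\tau_i = \bar\xi_{i+1}$ hold on the nose, so the ideal generated by the image of $\mathbb B_{>0}$ is literally $(\bar\xi_{n+1},\dots,\bar\tau_n,\bar\tau_{n+1},\dots)$.

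The gap is in your direct treatment of $\tau_n$. From ``$a_i\mapsto\tau_{n+i}$ and $b_i\mapsto\xi_{n+i}$ modulo decomposables'' plus a Poincar\'e series count you can legitimately conclude that $\mathbb B\to\mc A_*$ is injective, that $\mc A_*$ is free over $\mathbb B$, and that $\mc A_*\otimes_{\mathbb B}\F_p$ is \emph{abstractly isomorphic} to $\Lambda[\tau_0,\dots,\tau_{n-1}]\otimes\F_p[\xi_1,\dots,\xi_n]$. What you cannot conclude is that the image of $\mathbb B$ is the specific subalgebra $\Lambda[\tau_n,\tau_{n+1},\dots]\otimes\F_p[\xi_{n+1},\dots]$, nor that the quotient is by the specific ideal $(\xi_{n+1},\dots,\tau_n,\tau_{n+1},\dots)$, which is what the theorem asserts: compare $(x)$ and $(x+y^2)$ in $\F_p[x,y]$, which have isomorphic but unequal quotients. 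To close this you would need the decomposable correction terms for the unconjugated generators to lie in the target ideal, and Steinberger gives no usable formulas there. The paper avoids the issue entirely: it proves only the $\bar\tau_n$ case directly and deduces the $\tau_n$ case by the swap automorphism of $\mc A = H\F_p\wedge H\F_p$, an automorphism of commutative ring spectra carrying $\tau_i\mapsto\bar\tau_i$ and $\xi_i\mapsto\bar\xi_i$ and hence carrying one ideal exactly onto the other. Reordering your argument the same way (your final paragraph already contains everything needed for $\bar\tau_n$) repairs the proof.
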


\begin{remark}
  The subring
  \[
    \F_p[\xi_1, \dots, \xi_n] \otimes \Lambda[\tau_0, \dots, \tau_{n-1}] \subset \mc A_\ast
  \]
  maps isomorphically onto both quotients. 
\end{remark}

\begin{proof}
  The conjugation operation is realized by an automorphism of $\mc A$ as a commutative ring spectrum, so it suffices to prove the case of $\bar \tau_n$.
  
  Consider the left unit map $H\F_p \to \mc A = H\F_p \wedge H\F_p$ of commutative ring spectra. Then Proposition~\ref{prop:ekquot} (with $k=1$) shows that there is a formula for the free $\E_1$-quotient:
\[
  \mc A \aquot \alpha \simeq \mc A \smashover{\Free^{H\F_p}_{\E_2}(S^n)} H\F_p.
\]
In particular, there is a K\"unneth spectral sequence
\[
  \Tor^{H_* \Free_{\E_2}(S^n)}_{**}(\mc A_*, \F_p) \Rightarrow \pi_* (\mc A \aquot \alpha).
\]
By \cite[Theorem III.3.1]{CohenLadaMayHomology}, for odd $p$ and $n$ there is an isomorphism
\[
  H_* \Free_{\E_2}(S^n) \cong \Lambda[\alpha, Q_{1/2}(\alpha), Q_{1/2}^{(2)} (\alpha), \dots] \otimes \F_p[\beta Q_{1/2} (\alpha), \beta Q_{1/2}^{(2)}(\alpha), \dots].
\]
(As in Corollary~\ref{cor:weirdmaysseq}, we take the convention that lower-indexed Dyer--Lashof operations satisfy $Q_j x = Q^{|x|/2 + j}$ at odd primes).

Suppose $\alpha = \bar \tau_{n+1}$. Using the Dyer--Lashof operations from the left unit, $Q_{1/2} \bar \tau_n = \bar \tau_{n+1}$ and $\beta Q_{1/2} \bar \tau_n = \bar \xi_{n+1}$ by \cite[Theorem III.2.3]{Hinfinity}. Therefore, the $\E_2$-term of the K\"unneth spectral sequence can be rewritten as
\[
  \Tor_{**}^{\Lambda[\bar \tau_n, \bar \tau_{n+1}, \dots] \otimes \F_p[\bar \xi_{n+1}, \bar \xi_{n+2}, \dots]} (\Lambda[\bar \tau_0, \bar \tau_{1}, \dots] \otimes \F_p[\bar \xi_1, \bar \xi_2, \dots], \F_p).
\]
However, the first $\Tor$-factor is flat over the base ring. As a result, the K\"unneth spectral sequence degenerates down to an isomorphism
\[
  \pi_* (\mc A \aquot \bar \tau_n) \cong \mc A_\ast / (\bar \tau_n, \bar \tau_{n+1}, \dots, \bar \xi_{n+1}, \bar \xi_{n+2} \dots),
\]
as desired.
\end{proof}

\printbibliography

\end{document}